\newtheorem{thm}{Theorem}[section]
\newtheorem{prop}[thm]{Proposition}
\newtheorem{lem}[thm]{Lemma}
\newtheorem{cor}[thm]{Corollary}
\newtheorem{conj}[thm]{Conjecture}
\renewcommand{\theclaim}{\kern-3pt}
\theoremstyle{definition}
\newtheorem{definition}[thm]{Definition}
\theoremstyle{remark}
\newtheorem{rem}[thm]{Remark}
\newtheorem{rems}[thm]{Remarks}
\numberwithin{equation}{section}
\newcommand{\sC}{{\mathcal C}}
\newcommand{\sE}{{\mathcal E}}
\newcommand{\sH}{{\mathcal H}}
\newcommand{\sM}{{\mathcal M}}
\newcommand{\sO}{{\mathcal O}}
\newcommand{\sS}{{\mathcal S}}
\newcommand{\sZ}{{\mathcal Z}}
\newcommand{\A}{{\mathbb A}}
\newcommand{\N}{{\mathbb N}}
\renewcommand{\P}{{\mathbb P}}
\newcommand{\W}{{\mathbb W}}
\newcommand{\Z}{{\mathbb Z}}
\renewcommand{\L}{{\mathbb L}}
\renewcommand{\phi}{\varphi}
\newcommand{\codim}{{\rm codim}}
\newcommand{\Hom}{{\rm Hom}}
\newcommand{\Spec}{{\rm Spec \,}}
\newcommand{\id}{{\operatorname{id}}}
\newcommand{\Sch}{{\operatorname{\mathbf{Sch}}}}
\newcommand{\op}{{\text{\rm op}}}
\newcommand{\del}{\partial}
\newcommand{\Sm}{{\mathbf{Sm}}}
 \newcommand{\Ab}{{\mathbf{Ab}}}
\newcommand{\SH}{{\operatorname{\sS\sH}}}
\newcommand{\ds}{{/\kern-3pt/}}
\newcommand{\res}{{\operatorname{res}}}
\newcommand{\Gr}{{\mathop{\rm{Grass}}}}
\newcommand{\Mod}{{\operatorname{Mod}}}
\newcommand{\SmOp}{{\mathbf{SmOp}}}
\newcommand{\SP}{{\mathbf{SP}}}
\newcommand{\gr}{\mathbf{Gr}}
\newcommand{\bgr}{\mathbf{bi\text{-}Gr}}
\newcommand{\MGL}{{\operatorname{MGL}}}
\newcommand{\BGL}{{\operatorname{BGL}}}
\begin{document}

\title[Oriented cohomology]{Oriented cohomology, Borel-Moore homology and algebraic cobordism}
\author{Marc Levine}
\address{
Department of Mathematics\\
Northeastern University\\
Boston, MA 02115\\
USA}
\email{marc@neu.edu}

\keywords{Algebraic cobordism}

\subjclass{Primary 14C25, 19E15; Secondary 19E08 14F42, 55P42}
 
 \thanks{The
author thanks the NSF for support via grant DMS-0457195}

\renewcommand{\abstractname}{Abstract}
\begin{abstract}   We examine various versions of oriented cohomology and Borel-Moore homology theories in algebraic geometry and put these two together in the setting of an ``oriented duality theory", a generalization of Bloch-Ogus twisted duality theory. We apply this to give a Borel-Moore homology version $\MGL'_{*,*}$  of Voevodsky's  $\MGL^{*,*}$-theory, and a natural map $\vartheta:\Omega_*\to \MGL'_{2*,*}$, where $\Omega_*$ is the algebraic cobordism theory defined in \cite{LevineMorel}. We conjecture that $\vartheta$ is an isomorphism and describe a program for proving this conjecture.
\end{abstract}
\date{December 31, 2007}
\maketitle
\tableofcontents

\section*{Introduction}  The notion of {\em oriented cohomology} has been introduced, in various forms and in various settings, in the work of Panin \cite{Panin2}, Levine-Morel \cite{LevineMorel},   and others. A related notion, that of {\em oriented Borel-Moore homology} appears in \cite{LevineMorel}.  Mocanasu \cite{Mocanasu} has examined the relation of these two notions, and, with a somewhat different axiomatic as appearing in either  \cite{LevineMorel} or \cite{Panin2}, has given an equivalence of these two theories, the relation being that the cohomology with supports in a closed subset $X$ of a smooth scheme $M$ becomes the Borel-Moore homology of $X$. 

Our main goal in this paper is to tie all these theories together. Our first step is to extend results of \cite{Panin2}, to show that an orientation on a ring cohomology theory gives rise to a good theory of projective push-forwards on the cohomology with supports. This extension of Panin's results allows us to use the ideas and results of Mocanasu, which in essence show that many of the properties and structures associated with the cohomology of a smooth scheme $M$ with supports in a closed subset $X$ depend only on $X$; we require resolution of singularities for this step. We axiomatize this into the notion of an {\em oriented duality theory}, which one can view as a version of the classical notion of a Bloch-Ogus twisted duality theory. The main difference between a general oriented duality theory $(H,A)$ and a Bloch-Ogus theory is that one does not assume that the Chern class map $L\mapsto c_1(L)$ satisfies the usual additivity with respect to tensor product of line bundles:
\[
c_1(L\otimes M)=c_1(L)+c_1(M).
\]
This relation gets replaced with the {\em formal group law} $F_A(u,v)\in A(\Spec k)[[u,v]]$ of the underlying oriented cohomology theory $A$, defined by the relation
\[
c_1(L\otimes M)=F_A(c_1(L),c_1(M)).
\]
In fact, the Chern classes $c_1(L)$ and formal group law $F_A$ are not explicitly given as part of the axioms, but follows from the more basic structures, namely, the pull-back,  projective push-forward and the projective bundle formula.

We conclude with a discussion of the two theories which form our primary interest: the theory of algebraic cobordism $\Omega_*$ of \cite{LevineMorel}, and the bi-graded theory $\MGL^{*,*}$, also known as algebraic cobordism, but defined via the algebraic Thom complex $\MGL$ in the Morel-Voevodsky motivic stable homotopy category $\SH(k)$ (see \cite{Voevodsky}). Assuming that $k$ admits resolution of singularities, we show how one may apply our general theory to $\MGL^{*,*}$, giving rise to an associated oriented Borel-Moore homology theory $\MGL'_{*,*}$, which together form an oriented duality theory $(\MGL'_{*,*}, \MGL^{*,*})$. Concerning $\Omega_*$, we show how this theory comes with a canonical ``classifying map"
\[
\vartheta_H:\Omega_*\to H_{2*,*}
\]
for each bi-graded oriented duality theory $(H,A)$. Taking the case $(\MGL'_{*,*}, \MGL^{*,*})$, we achieve an extension of the natural transformation $\vartheta^\MGL:\Omega^*\to \MGL^{2*,*}$, discussed in \cite{LevineMorel}, to a natural transformation
\[
\vartheta_\MGL:\Omega_*\to \MGL'_{2*,*}.
\]
We conjecture that $\vartheta$ is an isomorphism, extending the conjecture of  \cite{LevineMorel} that $\vartheta^\MGL$ is an isomorphism, and we outline a program for proving this conjecture. In fact, the extension of the conjecture of \cite{LevineMorel}, and how this extension to the setting of Borel-Moore homology could lead to a proof of the conjecture, is the main motivation behind this paper.

In the first section, we review Panin's theory of oriented ring cohomology and show how his method of defining projective push-forwards for oriented ring cohomology extends to give projective push-forwards for cohomology with supports. In section~\ref{sec:Mocanasu}, we recall Mocanasu's theory of algebraic oriented cohomology, giving a modified version of this theory, and  show that the projective push-forward with supports defined in section~\ref{sec:Integration} endows an oriented ring cohomology theory  with the structure of an algebraic oriented cohomology theory. In section~\ref{sec:OrientDualityThy} we introduce the notion of an oriented duality theory and show that an oriented ring cohomology theory extends uniquely to  an oriented duality theory. In the last section, we apply our results to $\MGL^{*,*}$, construct the classifying map $\vartheta_H:\Omega_*\to H_{2*,*}$, and discuss the conjecture that $\vartheta_\MGL$ is an isomorphism.

\section{Integration with support}\label{sec:Integration}  Panin has made a study of properties of {\em oriented ring cohomology theories}, showing how a good theory of Chern classes of line bundles gives rise to push-forward maps for projective morphisms (he calls this latter structure an {\em integration}). For our purposes, we will need push-forward maps for projective morphisms of pairs, so we need to extend Panin's theory a bit. Fortunately, the extension is mainly a matter of making a few changes in the definitions, and noting that most of Panin's arguments extend without major change to the more general setting. In this section, we give the necessary extension of Panin's theory.

We fix a base-field $k$ and let $\Sm/k$ denote the category of smooth, quasi-projective varieties over $k$. We denote the base-scheme $\Spec k$ by $pt$. Panin uses the category $\SmOp$ of {\em smooth open pairs} over $k$, this being the category of pairs $(M,U)$, $M,U\in\Sm/k$, with $U\subset M$ an open subscheme. A morphism $f:(M,U)\to (N,V)$ is a morphism $f:M\to N$ in $\Sm/k$ with $f(U)\subset V$. 

For $(M,U)\in\SmOp$ and for $X=M\setminus U$, we call the pair $(M,X)$ a {\em  smooth pair}. If we define a morphism of smooth pairs, $f:(N,Y)\to (M,X)$ to be a morphism $f:N\to M$ in $\Sm/k$ such that $f^{-1}(X)\subset Y$, then we have the evident isomorphism of  $\SmOp$ with the category $\SP$ of smooth pairs. We will use throughout $\SP$ instead of $\SmOp$. We have the inclusion functor $\iota:\Sm/k\to \SP$ sending $M$ to $(M,M)$ and $f:M\to N$ to the induced map $f:(M,M)\to (N,N)$. 

As our intention here is to add support conditions to Panin's theory, and this requires some additional commutativity conditions not imposed in \cite{Panin2}, we will add the simplifying assumption that a ring cohomology theory will always be  $\Z/2$-graded. We likewise require that the boundary maps in the underlying cohomology theory are of odd degree and that the pull-back maps preserve degree.  Having made these modifications, we have the following version of Panin's notion of a cohomology theory, and a ring cohomology theory, on $\SP$.

\begin{definition}\label{def:cohomology}
A {\em cohomology theory} $A$ on $\SP$ is  a   functor $A:\SP^\op\to \gr_{\Z/2}\Ab$, 
together with a collection of degree 1 operators 
\[
\del_{M,X}:A(M,X)\to A(M\setminus X,M\setminus X)
\]
 satisfying the axioms of \cite[Definition 2.0.1]{Panin2}. 
 
 For a smooth pair $(M,X)$, we write $A_X(M)$ for $A(M,X)$, we write $A(M)$ for $A(M,M)=A_M(M)$, and for $f:(M,X)\to (N,Y)$ a morphism in $\SP$, we write $f^*:A_Y(N)\to A_X(M)$ for the map $A(f)$. For a smooth pair $(M,X)$,  the identity map on $M$ induces the ``forget the support map" $\id_M^*:A_X(M)\to A(M)$.   With these notations, the axioms are:
 \begin{enumerate}
 \item {\em localization}: For each $(M,X)\in\SP$,  let $U=M\setminus X$ and let $j:U\to M$ be the inclusion. Then the sequence
 \[
 A(M)\xrightarrow{j^*}A(U)\xrightarrow{\del_{M,X}}A_X(M)\xrightarrow{\id_M^*}A(M)\xrightarrow{j^*}
 A(U)
 \]
 is exact.
 
 In addition, the maps $\del_{M,X}$ are natural with respect to morphisms in $\SP$: given a morphism $f:(M,X)\to (N,Y)$ in $\SP$, the diagram
 \[
 \xymatrix{
 A(N\setminus Y)\ar[r]^{\del_{N,Y}}\ar[d]_{f_{|N\setminus Y}^*}&A_Y(N)\ar[d]^{f^*}\\
 A(M\setminus X)\ar[r]_{\del_{M,X}}&A_X(M)
 }
 \]
 commutes.
 \item {\em excision}: Let $f:M'\to M$ be an \'etale morphism in $\Sm/k$, let $X\subset M$ be a closed subset and suppose that $f:f^{-1}(X)\to X$ is an isomorphism (giving $X$ and $f^{-1}(X)$ the reduced scheme structure). Then the map $f:(M',f^{-1}(X))\to (M,X)$ induces an isomorphism $f^*:A_X(M)\to A_{f^{-1}(X)}(M')$.
 \item {\em homotopy}: For $M\in\Sm/k$, the map $p^*:A(M)\to A(M\times\A^1)$ induced by the projection $p:M\times\A^1\to M$ is an isomorphism.
 \end{enumerate}
 \end{definition}

\begin{rems}\label{rems:Cohomology} 1. The localization and excision axioms yield a long exact Mayer-Vietoris sequence. Similarly, if $X\subset X'\subset M$ are closed subsets of $M\in\Sm/k$, putting together the localization sequences for $X\subset M$, $X'\subset M$ and $X'\setminus X\subset M\setminus X$ gives the exact sequence of the triple $(M,X',X)$:
\begin{multline*}
 A_{X'}(M)\xrightarrow{j^*}A_{X'\setminus X}(M\setminus X)\\\xrightarrow{\del_{M,X',X}}A_X(M)\xrightarrow{\id_M^*}A_{X'}(M)\xrightarrow{j^*}
 A_{X'\setminus X}(M\setminus X).
\end{multline*}
See \cite[2.2.3]{Panin2} for details.\\\\
2. Let $p:V\to M$ be an affine space bundle, $X\subset M$ a closed subset. Together with localization and Mayer-Vietoris, the homotopy axiom implies that
\[
p^*:A_X(M)\to A_{p^{-1}(X)}(V)
\]
is an isomorphism. Indeed, $V\to M$ is Zariski locally isomorphic to the projection $M\times\A^n\to M$.
\end{rems}
\begin{definition}
A {\em ring cohomology theory} on $\SP$ is a cohomology theory $A$ on $\SP$ together with graded maps for each pair of   smooth pairs $(M,X)$, $(N,Y)$ 
\[
\times:A_X(M)\otimes A_Y(N)\to A_{X\times Y}(M\times N)
\]
and an element $1\in A^{ev}(pt)$
satisfying the axioms of \cite[Definition 2.4.2]{Panin2}:
\begin{enumerate}
\item {\em associativity}: $(a\times b)\times c=a\times(b\times c)$.
\item {\em unit}:  $a\times1=1\times a=a$.  
\item{\em the partial Leibniz rule}:  Given smooth pairs $(M,X)$, $(M,X')$, $(N,Y)$, with $X\subset X'$, we have the exact sequence of the triple $(M\times N, X'\times Y, X\times Y)$ (remark~\ref{rems:Cohomology}(1)) with boundary map
\[
\del_{M\times N, X'\times N, X\times N}:A_{(X'\setminus X)\times Y}((M\setminus Z)\times N)\to A_{X\times Y}(M\times N).
\]
We  also have the triple $(M,X',X)$, with boundary map
\[
\del_{M,X',X}:A_{X'\setminus X}(M\setminus X)\to A_X(M).
\]
Then  
\[
\del_{M\times N,X'\times N,X\times N}(a\times b)=\del_{M,X',X}(a)\times b
\]
for $a\in A_{X'\setminus X}(M\setminus X)$, $b\in A_Y(N)$.  
\end{enumerate}
We add the axiom 
\begin{enumerate}
\item[(4)] {\em graded commutativity}: For $a\in A_X(M)$ of degree $p$ and $b\in A_Y(N)$ of degree $q$, and with $\tau:N\times M\to M\times N$ denoting the symmetry isomorphism, we have
\[
\tau^*(a\times b)=(-1)^{pq}(b\times a).
\]
\end{enumerate}
\end{definition}

For smooth pairs $(M,X), (M,Y)$, pull-back by the diagonal gives the {\em cup product with supports}
\[
\cup:A_X(M)\otimes A_Y(M)\to A_{X\cap Y}(M);\quad \cup:=\delta_X^*\circ \times.
\]
In particular, $A(M)$ is a $\Z/2$-graded, graded-commutative  ring with unit for each $M\in\Sm/k$ and $A_X(M)$ is an $A(M)$-module for each smooth pair $(M,X)$; $A_X(M)$ is itself a $\Z/2$-graded, graded-commutative  ring without unit. The partial Leibniz rule implies that, for a triple $(M,X',X)$, the boundary map 
\[
\del_{M,X',X}:A_{X'\setminus X}(M\setminus X)\to A_X(M)
\]
is  an $A_{X''}(M)$-module map for all closed subsets $X'\subset X''\subset M$; more generally, for any closed $X''\subset M$, we have
\[
\del_{M,X'\cap X'', X\cap X''}(a\cup b)=a\cup \del_{M,X',X}(b)\in A_{X\cap X''}(M)
\]
for $a\in A_{X''}(M)$, $b\in A_{X'\setminus X}(M\setminus X)$. 

\begin{rem} Instead of a $\Z/2$ grading, one can work in the $\Z$-graded or bi-graded setting. One requires that the pull-back maps $f^*$ preserve the (bi-)grading, and that $\del$ is of degree $+1$ or bi-degree $(+1,0)$.
\end{rem}

 At various places in the theory, Panin requires various elements to have certain commutativity properties (see e.g., \cite[Definition 2.4.4]{Panin2}); we will replace these conditions with the condition that these elements have even degree.  With these modifications, Panin defines four structures on a ring cohomology theory $A$:
\begin{enumerate}
\item An {\em orientation} on $A$ is an assignment of a graded $A(M)$-module isomorphism $th^E_X:A_X(M)\to A_X(E)$ for each smooth pair $(M,X)$ and each vector bundle $E$ on $M$, satisfying the properties listed in \cite[Definition 3.1.1]{Panin2}.
\item A {\em Chern structure} on $A$ is an assignment of an even degree element $c_1(L)\in A^{ev}(M)$ for each line bundle $L$ on $M\in\Sm/k$, satisfying the properties of functoriality, nondegeneracy: a $\P^1$-bundle formula,  and vanishing for $L=O_M$ the trivial line bundle on $M$ (see \cite[Definition 3.2.1]{Panin2}).
\item A {\em Thom structure} on $A$ is the assignment of an even degree element $th(L)\in A^{ev}_M(L)$ for each line bundle $L$ on $M\in\Sm/k$, satisfying the properties of functoriality and nondegeneracy: cup product with $th(O_M)$ is an isomorphism $\cup th(O_M):A(M)\to A_M(M\times\A^1)$ (see \cite[Definition 3.2.2]{Panin2}).
\item An {\em integration} on $A$ is a assignment $(f:N\to M)\mapsto f_*:A(N)\to A(M)$ for each projective morphism $f:N\to M$ in $\Sm/k$, satisfying the properties of \cite[Definition 4.1.2]{Panin2} (since we are in the $\Z/2$-graded setting, we require that $f_*$ preserves the grading).
\end{enumerate}
The main result of \cite{Panin2} is that each of these structures gives rise in a uniquely determined manner to all the other structures, and that each ``loop" in this process induces the identity transformation. Our goal in this section is to extend this result to a more widely defined integration structure. 

 \begin{rem}\label{rem:Grading1} In the $\Z$-graded or bi-graded situation one  requires that the Chern class $c_1(L)$ or the Thom class $th(L)$ is in degree 2 (in the graded case) or bi-degree $(2,1)$ (in the bi-graded case), and that the push-forward $f_*$ shifts (bi-)degrees
 \[
 f_*:A^n(N)\to A^{n+2d}(M);\quad f_*:A^{p,q}(N)\to A^{p+2d,q+d}(M)
 \]
 where $d$ is the codimension of $f$, $d=\dim_kM-\dim_kN$. With these  modifications, one recovers Panin's main results in the (bi-)graded case.
 \end{rem}

Let $\SP'$ be the category with objects the smooth pairs $(M,X)$, $M\in \Sm/k$, $X\subset M$, where a morphism $f:(M,X)\to (N,Y)$ is a projective morphisms of pairs, i.e., a projective morphism $f:M\to N$ in $\Sm/k$ such that $f(X)\subset Y$. A  morphism $f:N\to M$ in $\Sm/k$ and  closed subsets $Z\subset M$ and $Y\subset N$ give rise to  the map
\[
f^*(-)\cup:A_Z(M)\otimes A_Y(N)\to A_{Y\cap f^{-1}(Z)}(N)
\]
sending $a\otimes b$ to $f^*(a)\cup b$, where $f^*:A_Z(M)\to A_{f^{-1}(Z)}(N)$ is the pull-back. For $Y\subset f^{-1}(Z)$, the map $f^*(-)\cup$ makes $A_Y(N)$ an $A_Z(M)$-module.

\begin{definition}\label{def:IntSupp} Let $A$ be a $\Z/2$-graded ring cohomology theory on $\SP$. An {\em integration with supports} on $A$ is an assignment of a graded push-forward map
\[
f_*:A_Y(N)\to A_X(M)
\]
for each   morphism $f:(N,Y)\to (M,X)$ in $\SP'$, satisfying:
\begin{enumerate}
\item $(f\circ g)_*=f_*\circ g_*$ for composable   morphisms.
\item For $f:(N,Y)\to (M,X)$ in $\SP'$, and $Z$ a closed subset of $M$,  $f_*$ is a $A_Z(M)$-module map, i.e., the diagram
\[
\xymatrixcolsep{40pt}
\xymatrix{
 A_Z(M)\otimes A_Y(N)\ar[r]^-{f^*(-)\cup} \ar[d]_{\id\otimes f_*}&A_{Y\cap f^{-1}(Z)}(N)\ar[d]^{f_*}\\
 A_Z(M)\otimes A_X(M)\ar[r]_-{\cup}&A_{X\cap Z}(M)
}
\]
commutes.
\item Let $i:(N,Y)\to (M,X)$ be morphism in $\SP'$ such that $i:N\to M$ is a closed embedding, let  $g:(\tilde{M},\tilde{X})\to (M,X)$ be a morphism in $\SP$. Let $\tilde{N}:=N\times_M\tilde{M}$, $\tilde{g}:\tilde{N}\to N$, $\tilde{i}:\tilde{N}\to \tilde{M}$ be the projections, and let $\tilde{Y}:=\tilde{i}^{-1}(Y)$. Suppose in addition that $\tilde{N}$ is in $\Sm/k$ and the square
\[
\xymatrix{
\tilde{N}\ar[d]_{\tilde{g}}\ar[r]^{\tilde{i}}&\tilde{M}\ar[d]^g\\
N\ar[r]_i&M}
\]
is transverse. Then the diagram
\[
\xymatrix{
A_{\tilde{Y}}(\tilde{N})\ar[r]^{\tilde{i}_*}&A_{\tilde{X}}(\tilde{M})\\
A_Y(N)\ar[u]^{\tilde{g}^*}\ar[r]_{i_*}&A_X(M)\ar[u]_{g^*}}
\]
commutes.
\item Let $f:(N,Y)\to (M,X)$ be a morphism in $\SP$, and let $p_N:\P^n\times N\to N$, $p_M:\P^n\times M\to M$ be the projections. Then the diagram
\[
\xymatrix{
A_{\P^n\times Y}(\P^n\times N)\ar[d]_{p_{N*}}&A_{\P^n\times X}(\P^n\times M)\ar[l]_{(\id\times f)^*}\ar[d]^{p_{M*}}\\
A_Y(N)&A_X(M)\ar[l]^{f^*}}
\]
commutes.
\item Given smooth pairs  $(M,X)$ and $(M,Y)$ with   $X\subset Y$, the maps
\[
\id_{M*}:A_X(M)\to A_Y(M)
\]
and
\[
\id_M^*:A_X(M)\to A_Y(M)
\]
are equal.
\item  Let $f:N\to M$ be a projective morphism in $\Sm/k$, let $Y\subset Y'\subset N$, $X\subset X'\subset M$ be closed subsets, and suppose that $f^{-1}(X)\cap Y'=Y$, $f(Y')\subset X'$. Then the diagram
\[
\xymatrixcolsep{40pt}
\xymatrix{
A_{Y'\setminus Y}(N\setminus Y)\ar[r]^-{\del_{N,Y',Y}}\ar[d]_{f_*}&A_Y(N)\ar[d]^{f_*}\\
A_{X'\setminus X}(M\setminus X)\ar[r]_-{\del_{M,X',X}}&A_X(M)}
\]
commutes. Here $\del_{N,Y',Y}$ and $\del_{M,X',X}$ are the boundary maps in the respective long exact sequence for the triples $(N,Y',Y)$ and $(M,X',X)$, and the push-forward map 
$f_*:A_{Y'\setminus Y}(N\setminus Y)\to A_{X'\setminus X}(M)$ is the composition
\[
A_{Y'\setminus Y}(N\setminus Y)\xrightarrow{j^*}A_{Y'\setminus Y}(N\setminus f^{-1}(X))
\xrightarrow{f_{|N\setminus f^{-1}(X)*}}A_{X'\setminus X}(M\setminus X).
\]
\end{enumerate}
\end{definition}
Note that an integration with supports on $A$  determines an integration on $A$ by restricting $f_*$ to $f_*:A(N)\to A(M)$. One has as well a $\Z$-graded or bi-graded version.

 For later use, we give an extension of the properties (3) and (4) of definition~\ref{def:IntSupp}.

\begin{lem} \label{lem:PushPull} Let $A$ be a $\Z/2$-graded ring cohomology theory on $\SP$, with an integration with supports. Let $f:(N,Y)\to (M,X)$ be a   morphism in $\SP'$, $g:(\tilde{M},\tilde{X})\to (M,X)$ a morphism in $\SP$. Let $\tilde{N}:=N\times_M\tilde{M}$, $\tilde{g}:\tilde{N}\to N$, $\tilde{f}:\tilde{N}\to \tilde{M}$ be the projections, and let $\tilde{Y}:=\tilde{f}^{-1}(X)$. Suppose in addition that there is an open neighborhood $U$ of $Y$ in $N$ such that $U\times_M\tilde{M}$ is in $\Sm/k$, the diagram
\[
\xymatrix{
U\times_M\tilde{M}\ar[d]_{\tilde{g}}\ar[r]^{\tilde{f}}&\tilde{M}\ar[d]^g\\
U\ar[r]_f&M}
\]
is transverse, and  the closure $\hat{U}$ of $U\times_M\tilde{M}$ in $\tilde{N}$ is smooth. Then the diagram
\[
\xymatrix{
A_{\tilde{Y}}(\hat{U})\ar[r]^{\hat{f}_*}&A_{\tilde{X}}(\tilde{M})\\
A_Y(N)\ar[u]^{\hat{g}^*}\ar[r]_{f_*}&A_X(M)\ar[u]_{g^*}}
\]
commutes, where $\hat{f}:\hat{U}\to \tilde{M}$ is the restriction of $\tilde{f}$ and $\hat{g}:\hat{U}\to N$ is the restriction of $\tilde{g}$.

In particular, if $\tilde{N}$ is in $\Sm/k$ and the cartesian diagram
\[
\xymatrix{
\tilde{N}\ar[d]_{\tilde{g}}\ar[r]^{\tilde{f}}&\tilde{M}\ar[d]^g\\
N\ar[r]_f&M}
\]
is transverse, then the diagram
\[
\xymatrix{
A_{\tilde{Y}}(\tilde{N})\ar[r]^{\tilde{f}_*}&A_{\tilde{X}}(\tilde{M})\\
A_Y(N)\ar[u]^{\hat{g}^*}\ar[r]_{f_*}&A_X(M)\ar[u]_{g^*}}
\]
commutes.
\end{lem}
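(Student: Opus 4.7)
The plan is to factor the projective morphism $f$ through a closed embedding and a projection, reducing the lemma to the axioms of Definition~\ref{def:IntSupp} (specifically (3) and (4)), and to use excision to bridge the gap between $\hat U$ and the possibly singular full fiber product $\tilde N$.

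Since $f:N\to M$ is projective, I choose a closed embedding $i:N\hookrightarrow\P^n\times M$ over $M$, giving $f = p\circ i$ where $p:\P^n\times M\to M$ is the projection. Writing $\tilde p:\P^n\times\tilde M\to\tilde M$ for the base change of $p$, and letting $\hat i:\hat U\hookrightarrow\P^n\times\tilde M$ be the closed embedding obtained by composing $\hat U\subset\tilde N$ with the base change $\tilde N\hookrightarrow\P^n\times\tilde M$ of $i$, axiom~(1) factors pushforwards as $f_* = p_*\circ i_*$ and $\hat f_* = \tilde p_*\circ \hat i_*$, and the lemma reduces to the chain
$$g^*p_*i_*(a) = \tilde p_*(\id_{\P^n}\times g)^*i_*(a) = \tilde p_*\hat i_*\hat g^*(a).$$
The first equality is axiom~(4) applied to $g:(\tilde M,\tilde X)\to(M,X)$ in $\SP$; because $f(Y)\subset X$ implies $i(Y)\subset\P^n\times X$, one first enlarges the support of $i_*(a)$ to $\P^n\times X$ and invokes the compatibility of $p_*$ and pullback with the support-enlargement maps, which follows from axioms~(1) and~(5).

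The second equality amounts to proving $(\id_{\P^n}\times g)^*\circ i_* = \hat i_*\circ\hat g^*$, and is the heart of the argument. Set $V := (\P^n\times M)\setminus i(N\setminus U)$ and $\tilde V := (\id_{\P^n}\times g)^{-1}(V)\subset\P^n\times\tilde M$; then $i^{-1}(V) = U$, so $i|_U:U\hookrightarrow V$ is a closed embedding, and its cartesian base change along $\tilde V\hookrightarrow\P^n\times M$ is the closed embedding $\hat U^\circ := U\times_M\tilde M\hookrightarrow\tilde V$, which is smooth and transverse by hypothesis. Axiom~(3) applied to this square delivers the analog of the desired identity for $i|_U$ and $\hat i|_{\hat U^\circ}$ on the opens $V$ and $\tilde V$. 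Because $Y\subset U$ and $\tilde Y\subset\hat U^\circ$, the excision isomorphisms $A_Y(N)\cong A_Y(U)$, $A_{i(Y)}(\P^n\times M)\cong A_{i(Y)}(V)$, $A_{\tilde Y}(\hat U)\cong A_{\tilde Y}(\hat U^\circ)$ and $A_{\tilde Y}(\P^n\times\tilde M)\cong A_{\tilde Y}(\tilde V)$ all apply, and axiom~(3) on the trivially transverse cartesian squares of smooth open immersions identifies $i_*$ with $(i|_U)_*$ and $\hat i_*$ with $(\hat i|_{\hat U^\circ})_*$ under these isomorphisms. This propagates the identity on the opens back up to $\P^n\times M$ and $\P^n\times\tilde M$, proving the second equality. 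The second assertion of the lemma is then the special case $U = N$, $\hat U = \tilde N$.

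The main technical obstacle I anticipate is the excision bookkeeping: the square $\hat U\to\tilde M$ over $N\to M$ is not cartesian outside the smooth open locus $\hat U^\circ$, and one must verify that the transverse cartesian identity on $\hat U^\circ\hookrightarrow\tilde V$ propagates faithfully through all four excision isomorphisms above without losing track of the supports $\tilde Y\subset\hat U^\circ$ and $i(Y)\subset V$. Once this is managed, axioms~(3) and~(4) combine to yield the full commutativity $g^*f_*(a)=\hat f_*\hat g^*(a)$.
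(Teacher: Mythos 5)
Your proposal is correct and follows essentially the same route as the paper: factor $f$ as $p\circ i$ through a closed immersion into $\P^n\times M$, handle the projection leg with Definition~\ref{def:IntSupp}(4), handle the closed-immersion leg by passing to the open complement $V$ of $i(N\setminus U)$ and its preimage $\tilde V$, applying Definition~\ref{def:IntSupp}(3) to the resulting transverse cartesian square of smooth schemes, and transporting back through the excision isomorphisms. The only cosmetic difference is that you keep the $\P^n\times M$ factorization explicit throughout, whereas the paper silently renames $\P^n\times M$ to $M$ after declaring it suffices to treat $f=i$.
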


\begin{proof} Factor $f:N\to M$ as $p\circ i$, with $i:N\to \P^n\times M$ a closed immersion, and $p:\P^n\to M$ the projection. The statement for $f=p$ is just definition~\ref{def:IntSupp}(3), so we need only handle the case of $f=i$ a closed immersion. Also, it suffices to handle the case $\tilde{X}=g^{-1}(X)$.

Set  $F:=N\setminus U$, and let $V:=M\setminus F$, $\tilde{V}:=g^{-1}(V)$. Then $V$ is a neighborhood of $X$ in $M$, and $g^{-1}(V)$ is a neighborhood of $\tilde{X}=g^{-1}(X)$ in $\tilde{M}$. Letting $\tilde{U}=U\times_M\tilde{M}$, we have the transverse cartesian diagram in $\Sm/k$
\[
\xymatrix{
\tilde{U}\ar[d]_{\tilde{g}_U}\ar[r]^{\tilde{i}_U}&\tilde{V}\ar[d]^g_U\\
U\ar[r]_{i_U}&V}
\]
with $i_U$ a closed immersion. Definition~\ref{def:IntSupp}(2) gives us the commutative diagram
\[
\xymatrix{
A_{\tilde{Y}}(\tilde{U})\ar[r]^{\tilde{i}_{U*}}&A_{\tilde{X}}(\tilde{V})\\
A_Y(U)\ar[u]^{\hat{g}_U^*}\ar[r]_{i_{U*}}&A_X(V)\ar[u]_{g_U^*}}
\]
Now we just use the excision isomorphisms
\[
A_{\tilde{Y}}(\hat{U})\to A_{\tilde{Y}}(\tilde{U}),\
A_Y(N)\to A_Y(U),\ A_{\tilde{X}}(\tilde{M})\to A_{\tilde{X}}(\tilde{V}),\
A_X(M)\to A_X(V)
\]
and definition~\ref{def:IntSupp}(2) to give the commutativity of 
\[
\xymatrix{
A_{\tilde{Y}}(\hat{U})\ar[r]^{\hat{i}_*}&A_{\tilde{X}}(\tilde{M})\\
A_Y(N)\ar[u]^{\hat{g}^*}\ar[r]_{i_*}&A_X(M)\ar[u]_{g^*}}
\]
\end{proof}

 \begin{rem}[Projection formula] \label{rem:ProjForm} The condition (2) of definition~\ref{def:IntSupp} is just the projection formula ``with supports", i.e., that
 \[
 f_*(f^*(a)\cup b)=a\cup f_*(b)\in A_{X\cap Z}(M)
 \]
 for $a\in A_Z(M)$, $b\in A_Y(N)$ and $f:(N,Y)\to (M,X)$ a morphism in $\SP'$. This axiom may also be stated using the external product instead of the cup product: Let $f:(N,Y)\to (M,X)$, $g:(N',Y')\to (M',X')$ be morphisms in $\SP'$. Then
 \begin{equation}\label{eqn:ExtProd}
 (f\times g)_*(a\times b)=f_*(a)\times g_*(b)\in A_{X\times X'}(M\times M'),
 \end{equation}
 for all $a\in A_Y(N)$, $b\in A_{Y'}(N')$. Indeed, to recover (2), take $g$ to be $\id:(M,Z)\to (M,Z)$, and apply (3) to the transverse cartesian diagram
 \[
 \xymatrix{
 N\ar[r]^{(\id,f)}\ar[d]_f&N\times M\ar[d]^{f\times\id}\\
 M\ar[r]_\delta&M\times M
 }
 \]
 and morphism $\delta:(M,X\cap Z)\to (M\times M, X\times Z)$. 
 
 To see that (2) implies \eqref{eqn:ExtProd}, since $f\times g=(f\times\id)\circ (\id\times g)$, it suffices to handle the case $g=\id$. From the commutative diagram
 \[
 \xymatrix{
 N\times M'\ar[r]^{p'_2}\ar[d]_{f\times\id}&M'\ar@{=}[d]\\
 M\times M'\ar[r]_{p_2}&M'
 }
 \]
 we have $p_2^{\prime*}(b)=(f\times\id)^*(p_2^*(b))$.  Applying (3) to the cartesian transverse diagram
 \[
 \xymatrix{
 N\times M'\ar[r]^{p_1'}\ar[d]_{f\times\id}&N\ar[d]^f\\
 M\times M'\ar[r]_{p_1}&M
 }
 \]
 and using (2) gives
 \begin{align*}
 (f\times\id)_*(a\times b)&=(f\times\id)_*(p_1^{\prime*}(a)\cup p_2^{\prime*}(b))\\
 &=(f\times\id)_*(p_1^{\prime*}(a)\cup(f\times\id)^*(p_2^*(b)))\\
 &=(f\times\id)_*(p_1^{\prime*}(a))\cup p_2^*(b)\\
 &=p_1^*(f_*(a))\cup p_2^*(b)\\
 &=f_*(a)\times b
 \end{align*}
 \end{rem}

To set up a one-to-one correspondence between integrations with support and the other structures, we rephase the compatibility condition \cite[Definition 4.1.3]{Panin2}.

\begin{definition}\label{def:Comp} Let $\omega$ be an orientation of $A$ and $L\mapsto c_1(L)$ the corresponding Chern structure on $A$ (given by \cite[3.7.5]{Panin2}). We say that an integration with supports $f\mapsto f_*$ on $A$ is subjected to the orientation $\omega$ if for each smooth pair $(M,X)$ and each line bundle $p:L\to M$ with zero-section $s:M\to L$, the endomorphism
\[
A_X(M)\xrightarrow{s_*}A_{p^{-1}(X)}(L)\xrightarrow{s^*}A_X(M)
\]
of $A_X(M)$ is given by cup product with $c_1(L)$. 
\end{definition}
In case $X=M$, this condition is just saying that $c_1(L)=s^*(s_*(1))$, which the reader will easily check is equivalent to the condition given in Panin's definition  \cite[Definition 4.1.3]{Panin2}.

Our main result is (compare with \cite[Theorem 4.1.4]{Panin2}):

\begin{thm}\label{thm:Int} Let $A$ be a $\Z/2$-graded ring cohomology theory. Given an  orientation $\omega$ on $A$ there is a unique integration with supports on $A$ subjected to $\omega$. 
 \end{thm}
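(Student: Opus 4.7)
My plan is to adapt Panin's proof of \cite[Theorem 4.1.4]{Panin2}, carrying supports along at every step. The orientation $\omega$ supplies, for every vector bundle $E\to M$ and closed $X\subset M$, a Thom isomorphism with supports $th^E_X:A_X(M)\iso A_X(E)$, and together with the associated Chern structure yields a projective bundle formula with supports
\[
A_{\P^n\times X}(\P^n\times M)=\bigoplus_{i=0}^{n}A_X(M)\cdot\xi^{\,i},\qquad \xi=c_1(\mathcal{O}(1)),
\]
as $A_X(M)$-modules. These two structures are the basic building blocks.

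Given a morphism $f:(N,Y)\to(M,X)$ in $\SP'$, I would factor $f=p\circ i$ with $i:N\hookrightarrow\P^{n}\times M$ a closed immersion and $p:\P^{n}\times M\to M$ the projection; since $f(Y)\subset X$ we have $i(Y)\subset p^{-1}(X)=\P^{n}\times X$. Then I define $i_*:A_Y(N)\to A_{\P^n\times X}(\P^n\times M)$ as the composition of the isomorphism $A_Y(N)\cong A_{i(Y)}(\P^{n}\times M)$ obtained by deformation to the normal bundle $\nu_{N/\P^{n}\times M}$ together with the supported Thom isomorphism, followed by the forget-support map into $A_{\P^n\times X}(\P^n\times M)$; I define $p_*:A_{\P^n\times X}(\P^n\times M)\to A_X(M)$ as projection onto the $\xi^{\,n}$-summand, and set $f_*:=p_*\circ i_*$. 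Independence of this definition with respect to the factorization is then proved by comparing two factorizations via a third built from a linear embedding $\P^{n}\hookrightarrow \P^{n+m}$, reducing by Lemma 1.2 and the partial Leibniz rule to a computation inside the projective bundle formula; this is a direct translation of the corresponding argument in \cite[\S4]{Panin2}.

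Verification of axioms (1)--(4) of Definition 1.1 is then essentially Panin's argument performed with supports inserted throughout, with Lemma 1.2 playing the role of his transverse base change. Axiom (5) is built into the definition (forget-support is applied explicitly in constructing $i_*$, and $p_*$ commutes with forget-support by its $A_X(M)$-linear description). Axiom (6), compatibility with boundary maps, is the genuinely new piece: for a closed immersion it reduces, via deformation to the normal bundle, to the naturality of the supported Thom isomorphism with respect to the triple exact sequence of Remark 1.1, and for the projection $\P^n\times M\to M$ it follows from $A_X(M)$-linearity of $p_*$ together with the partial Leibniz rule applied to the triple $(M,X',X)$. Subjectedness to $\omega$ is immediate from the construction, since for the zero-section $s$ of a line bundle $L$ the composition $s^*\circ s_*$ on $A_X(M)$ is precisely how $c_1(L)$ is recovered from the Thom class.

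For uniqueness, any integration with supports restricts along $\iota:\Sm/k\to\SP$ to an integration on $A$ subjected to $\omega$, which is unique by Panin's theorem. Axiom (3) then forces $i_*$ for every closed immersion $i$, because deformation to the normal bundle exhibits $i$ locally as a transverse base change of the zero-section of a vector bundle, where the push-forward is already pinned down by the Thom structure; axiom (4) forces $p_*$ for each projection $\P^n\times M\to M$ via the projective bundle formula; and axiom (1) determines $f_*$ in general. The main obstacle throughout is the independence of factorization in the supported setting, since it requires propagating the comparison argument of \cite{Panin2} through deformation diagrams while preserving support data; Lemma 1.2 is designed precisely to make this go through.
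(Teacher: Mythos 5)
Your plan follows the paper's own three-step strategy: first define $i_*$ for a closed immersion via deformation to the normal bundle and the Thom isomorphism with supports, then define $p_*$ for a projection $\P^n\times M\to M$ via the projective bundle formula, and finally factor a general projective $f$ as $p\circ i$ and prove independence of the factorization along Panin's lines. Your treatment of $i_*$, the role of Lemma~\ref{lem:PushPull} as the transverse-base-change tool, and the analysis of axiom~(6) via naturality of the Thom isomorphism and the partial Leibniz rule are all consistent with the paper's argument.

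There is, however, a genuine error in your Step~2. Projection onto the $\xi^n$-summand, i.e.\ $p_*(a\cdot\xi^i)=\delta_{i,n}\,a$, is \emph{not} the correct Gysin map for a projection in a general oriented theory; it is the formula only when the formal group law of $A$ is additive. The correct (Quillen) formula is $p_*(a\cdot\xi^i)=a_{n-i}\cdot a$, where $\omega_A=(1+\sum_{j\ge1}a_jt^j)\,dt$ is the normalized invariant differential of $F_A$; this is precisely why the paper develops the formal group law and its invariant form before defining $p_*$. Using the naive projection, independence of factorization already fails: factor $\id_{\P^1}$ as $p_1\circ\delta$ with $\delta:\P^1\to\P^1\times\P^1$ the diagonal and $p_1$ the first projection. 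Then $\delta_*(1)=c_1(\sO(\Delta))=c_1(\sO(1,1))=F_A(\xi_1,\xi_2)=\xi_1+\xi_2+a_{11}\xi_1\xi_2$ in $A(\P^1\times\P^1)$, and projecting onto the $\xi_2$-summand yields $1+a_{11}\xi_1\neq 1$ once $F_A$ is non-additive; the correct formula produces $1$ since $a_1=-a_{11}$. The coefficients $a_j$ enter essentially everywhere in Step~2 --- in $p_*\circ s_*=\id$ for an arbitrary section $s$, in the comparison of two factorizations, and in the uniqueness argument (where $q_*(t^m)=a_{n-m}$ is exactly what reduces uniqueness of $p_*$ with supports to Panin's no-supports case). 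So this is not a notational slip: Step~2 must be rebuilt around the invariant form of $F_A$, and without that your claim that the remaining verifications are ``a direct translation'' of Panin does not hold.
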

 
\begin{cor} Let $A$ be a $\Z/2$-graded ring cohomology theory. Given an integration $f\mapsto f_*$ on $A$ there is a unique integration with supports on $A$ extending $f$.
\end{cor}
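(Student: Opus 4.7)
The plan is to chain Panin's bijection between integrations and orientations \cite[Theorem~4.1.4]{Panin2} with the bijection of Theorem~\ref{thm:Int}. Given the integration $f_*$, let $\omega$ be the unique orientation on $A$ to which $f_*$ is subjected, furnished by Panin, and apply Theorem~\ref{thm:Int} to $\omega$ to obtain an integration with supports $\tilde f_*$ subjected to $\omega$.

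For existence of the extension, I would verify that restricting $\tilde f_*$ to the case $Y=N$, $X=M$ reproduces $f_*$. Two observations suffice. First, axioms (1)--(4) of Definition~\ref{def:IntSupp} specialize in this case to Panin's axioms for an integration, while axioms (5) and (6) become vacuous (the supports collapse to the total spaces and the boundaries vanish); hence the restriction is an integration on $A$ in Panin's sense. Second, the condition ``subjected to $\omega$'' of Definition~\ref{def:Comp}, specialized to $X=M$, is precisely Panin's condition $s^*(s_*(1))=c_1(L)$ from \cite[Definition~4.1.3]{Panin2}, so the restriction is subjected to $\omega$. By the uniqueness clause of Panin's theorem, the restriction equals $f_*$.

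For uniqueness, suppose $\tilde f_*'$ is another integration with supports extending $f_*$. By Theorem~\ref{thm:Int} it is enough to show that $\tilde f_*'$ is also subjected to $\omega$. Since the restriction of $\tilde f_*'$ is $f_*$, the $X=M$ case of Definition~\ref{def:Comp} holds by the defining property of $\omega$. To obtain the general case, fix a line bundle $p:L\to M$ with zero section $s$ and $e\in A_X(M)$, and apply the projection formula (Definition~\ref{def:IntSupp}(2)) to $s:(M,M)\to(L,s(M))$ with $a=p^*(e)\in A_{p^{-1}(X)}(L)$ and $b=1\in A(M)$:
\[
s_*(e) \;=\; s_*(s^*(a)\cup 1) \;=\; a\cup s_*(1) \;=\; p^*(e)\cup s_*(1) \;\in\; A_{s(X)}(L).
\]
Pulling back by $s$ and using $s^* p^*=\id$ together with $s^*(s_*(1))=c_1(L)$ gives $s^*(s_*(e))=e\cup c_1(L)=c_1(L)\cup e$, so $\tilde f_*'$ is subjected to $\omega$, and the uniqueness clause of Theorem~\ref{thm:Int} yields $\tilde f_*'=\tilde f_*$. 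The step one might have expected to be the main obstacle---propagating the subjection condition from $A(M)$ to $A_X(M)$---thus follows directly from the projection formula built into Definition~\ref{def:IntSupp}(2), and the corollary is essentially formal once Theorem~\ref{thm:Int} and Panin's bijection are in hand.
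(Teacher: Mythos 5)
Your proposal is correct and follows essentially the same route as the paper: chain Panin's bijection between integrations and orientations with Theorem~\ref{thm:Int}, deduce existence from the uniqueness clause of Panin's theorem, and deduce uniqueness by using the projection formula to propagate the subjection condition from $A(M)$ to $A_X(M)$. The only difference is presentational — you spell out why the restriction of an integration with supports to the no-support case satisfies Panin's axioms, a step the paper elides, and your displayed computation $s_*(e)=p^*(e)\cup s_*(1)$ followed by $s^*$ is an equivalent rearrangement of the paper's chain $s^*(s'_*(a))=s^*(s'_*(s^*p^*(a)\cup 1))=\cdots=a\cup c_1(L)$.
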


\begin{proof} Let $\omega$ be the orientation on $A$ corresponding to $f$ by \cite[Theorem 4.1.4]{Panin2}. By theorem~\ref{thm:Int}, there is a unique  integration with supports $\iota$ on $A$ subjected to $\omega$. Since the restriction of $\iota$ to an integration on $A$ (without supports) is subjected to $\omega$, it follows from the uniqueness in \cite[Theorem 4.1.4]{Panin2} that the restriction of $\iota$ to an integration on $A$ is the given one  $f\mapsto f_*$.  Thus an extension of  $f\mapsto f_*$ to an integration with supports on $A$ exists.

If now $\iota'$ is another extension, write the push-forward map for $f$ as $f'_*$; note that $f'_*=f_*$ if we omit supports. Take $a\in A_X(M)$ and let $p:L\to M$ be a line bundle with zero section $s$. Then
\begin{align*}
s^*(s'_*(a))&=s^*(s'_*(s^*p^*(a)\cup 1))\\
&=s^*(p^*(a)\cup s_*(1))\\
&=a\cup s^*(s_*(1))\\
&=a\cup c_1(L).
\end{align*}
Thus $\iota'$ is subjected to $\omega$, and hence $\iota=\iota'$ by the uniqueness in theorem~\ref{thm:Int}.
\end{proof}

Theorem~\ref{thm:Int} is proven by copying the construction in \cite{Panin2} of an integration subjected to a given orientation $\omega$, making at each stage the extension to an integration with supports. \\
\\
{\bf Step 1}: {\em The case of a closed immersion}. Let $i:N\to M$ be a closed immersion in $\Sm/k$,  $Y\subset N$ a closed subset and let $\nu\to N$ be the normal bundle of $N$ in $M$. The deformation to the normal bundle (\cite[\S 2.2.7]{Panin2}) gives the diagram
\[
\xymatrix{
\nu\ar[r]^{i_0}&M_t&M\ar[l]_{i_1}\\
N\ar[r]_{i_0}\ar[u]^s&N\times\A^1\ar[u]_{\tilde{i}}&N\ar[l]^{i_1}\ar[u]_i\\
Y\ar@{^(->}[u]\ar[r]_{i_0}&Y\times\A^1\ar@{^(->}[u]&Y.\ar[l]^{i_1}\ar@{^(->}[u]}
\]
From this deformation diagram,  we arrive at the maps
\[
A_Y(\nu)\xleftarrow{i_0^*}A_{Y\times\A^1}(M_t)\xrightarrow{i_1^*}A_Y(M).
\]
\begin{lem}\label{lem:Htpy} The maps $i_0^*, i_1^*$ are isomorphisms.
\end{lem}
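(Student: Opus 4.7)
The plan is to deduce that $i_0^*$ and $i_1^*$ are isomorphisms from the homotopy invariance of cohomology with supports, after reducing to a model situation via excision.

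First I would treat the model case in which $M=\nu$ and $i=s$ is the zero section of the normal bundle of $N$ in itself. In this case, by direct inspection of the blow-up construction, the deformation space is canonically isomorphic to $\nu\times\A^1$; under this identification $\tilde i:N\times\A^1\to M_t$ becomes $s\times\id$, and the inclusions $i_0,i_1$ become the closed immersions $\nu\cong\nu\times\{t\}\hookrightarrow\nu\times\A^1$ for $t=0,1$. The first projection $p:\nu\times\A^1\to\nu$ is a trivial $\A^1$-bundle, so the extension of the homotopy axiom to cohomology with supports (Remarks~\ref{rems:Cohomology}(2)) yields an isomorphism $p^*:A_Y(\nu)\iso A_{Y\times\A^1}(\nu\times\A^1)$. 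Since $p\circ i_0=p\circ i_1=\id_\nu$, one obtains $i_0^*=i_1^*=(p^*)^{-1}$, and both maps are isomorphisms.

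Next I would reduce the general case to this model case. Excision allows one to replace $M$, $\nu$, and $M_t$ by suitable étale neighborhoods of the relevant closed subsets $i(N)$, $s(N)$, and $\tilde i(N\times\A^1)$, provided the closed subsets lift isomorphically. By a tubular-neighborhood argument (Nisnevich-local triviality of a closed embedding of smooth schemes as the zero section of its conormal bundle), there is an étale cover $\{M_\alpha\to M\}$, restricting to an isomorphism above $N$, on which each pair $(M_\alpha, N\cap M_\alpha)$ is isomorphic to the zero-section model. Because the deformation to the normal bundle commutes with flat base change in $M$, one gets a corresponding étale cover of $M_t$ whose pieces are in the model case handled above. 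A Mayer-Vietoris argument (Remarks~\ref{rems:Cohomology}(1)) then patches the local isomorphisms into the desired global ones.

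The main obstacle is implementing the reduction step cleanly within Panin's axiomatic framework: one needs the tubular neighborhood result for $N\hookrightarrow M$ in an étale-local sense, together with verification that the deformation to the normal bundle behaves compatibly with étale base change, and then an iterated Mayer-Vietoris for the resulting cover. An alternative, avoiding the tubular-neighborhood appeal, is to work directly with the natural $\G_m$-action on the blow-up to build an algebraic contraction of $M_t$ onto $\nu$; this requires heavier local computation but stays entirely within the axiomatic setting.
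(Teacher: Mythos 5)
Your proposal takes a genuinely different route from the paper and, while the overall strategy is sound, it has several gaps and misses a much shorter argument.

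The paper's proof is far more economical: it simply cites Panin's Theorem 2.2.8 for the case $Y=N$ (i.e.\ full support), notes by excision that the same statement then holds for the complementary situation $V=N\setminus Y\subset U=M\setminus i(Y)$, and then applies the five lemma to the map of long exact sequences of the triples $(\nu,N,Y)$, $(M_t,N\times\A^1,Y\times\A^1)$, $(M,N,Y)$ to conclude the isomorphism in the middle, i.e.\ for arbitrary $Y\subset N$. You are instead re-proving the homotopy-purity statement from scratch, which is substantially more work and duplicates what Panin has already done.

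Your model case (zero section of $\nu$, deformation space $M_t\cong\nu\times\A^1$, and $i_0^*=i_1^*=(p^*)^{-1}$ via the supported homotopy property) is correct, though the canonical identification $M_t\cong\nu\times\A^1$ deserves a sentence of justification. The reduction step, however, is where the real gaps lie. First, the phrasing \emph{``an \'etale cover $\{M_\alpha\to M\}$, restricting to an isomorphism above $N$''} is internally inconsistent: a single \'etale map that restricts to an isomorphism over $N$ already satisfies the excision hypothesis for any $Y\subset N$, so no cover would be needed in that case, but such a global \'etale tubular neighborhood need not exist. What you actually need is a Zariski open cover $M=\bigcup U_i$ (after shrinking $M$ to a neighborhood of $N$, which is permitted by excision) so that each $(U_i,N\cap U_i)$ admits an \'etale map to a trivial model $(N\cap U_i)\times\A^c$ satisfying the excision hypothesis, and likewise compatibly on the deformation spaces. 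Second, the \emph{``Mayer-Vietoris argument \ldots then patches''} is a nontrivial step: the axioms give only Zariski Mayer--Vietoris (from localization plus excision) and excision for a single \'etale map that is an isomorphism over the support, not full Nisnevich descent. One can in fact run an induction on the number of pieces in the Zariski cover, applying Mayer--Vietoris and the five lemma at each step and handling intersections by restricting the model structure, but this has to be set up carefully and you do not do so. Third, the assertion that the deformation to the normal bundle commutes with \'etale base change (so that the local models on $M$ induce compatible local models on $M_t$) is true because blow-ups commute with flat base change, but it is an essential ingredient and should be stated and verified, not merely invoked in a parenthesis.

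In short: the approach can probably be pushed through, but it requires a careful Zariski induction and verification of several compatibilities that you gloss over, whereas the paper's argument reduces to a citation plus the five lemma and is the one you should adopt.
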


\begin{proof} In case $Y=N$, this is \cite[Theorem 2.2.8]{Panin2}. In general, let $U=M\setminus i(Y)$, $V=N\setminus Y$, let $\nu'$ be the normal bundle of $V$ in $U$, and let $U_t$ be the deformation space constructed from the closed immersion $i':V\to U$. 

Let $j:U_t\to M_t\setminus Y\times\A^1$, $\bar{j}:\nu'\to \nu\setminus Y$ be the inclusions. We have the commutative diagram
\[
\xymatrix{
A_V(\nu\setminus Y)\ar[d]_{\bar{j}^*}&\ar[l]_-{i_0^*}A_{V\times\A^1}(M_t\setminus Y\times\A^1)\ar[d]_{j^*}\ar[r]^-{i_1^*}&A_V(U)\ar@{=}[d]\\
A_V(\nu')&\ar[l]^-{i_0^*}A_{V\times\A^1}(U_t)\ar[r]_-{i_1^*}&A_V(U);}
\]
the maps $j^*$ and $\bar{j}^*$ are isomorphisms by excision. By \cite[Theorem 2.2.8]{Panin2}, the horizontal maps in the bottom row are isomorphisms, hence the horizontal maps in the top row are isomorphisms as well.

We have the commutative diagram
\[
\xymatrix{
\ar[d]&\ar[d]&\ar[d]\\
A_V(\nu\setminus Y)\ar[d]_\del&A_{V\times\A^1}(M_t\setminus Y\times\A^1)\ar[l]_-{i_0^*}\ar[r]^-{i_1^*}\ar[d]^\del
&A_V(U)\ar[d]^\del\\
A_Y(\nu)\ar[d]&A_{Y\times\A^1}(M_t)\ar[l]_{i_0^*}\ar[d]\ar[r]^{i_1^*}
&A_Y(M)\ar[d]\\
A_N(\nu)\ar[d]&A_{N\times\A^1}(M_t)\ar[l]^-{i_0^*}\ar[r]_-{i_1^*}\ar[d]&A_N(M)\ar[d]\\
 & &
 }
\]
where the columns are the long exact sequences of triples $(\nu,N,Y)$, $(M_t,N\times\A^1, Y\times\A^1)$ and $(M,N,Y)$. Thus, the case $Y=N$, our remarks above, and the five-lemma shows that the horizontal maps in the middle row are isomorphisms, as desired.
\end{proof}

Now let $X\subset M$ be a closed subset containing $i(Y)$. We have the diagram
\[
A_Y(\nu)\xleftarrow{i_0^*}A_{Y\times\A^1}(M_t)\xrightarrow{i_1^*}A_X(M)
\]
with $i_0^*$ an isomorphism.  Let
\[
i_*:A_Y(N)\to A_X(M)
\]
be given by the composition
\begin{equation}\label{eqn:Gysin}
A_Y(N)\xrightarrow{th^{\nu}_Y}A_Y(\nu)\xrightarrow{i_1^*\circ(i_0^*)^{-1}}A_X(M).
\end{equation}
 
\begin{prop}\label{prop:Gysin} Let $A$ be a $\Z/2$-graded oriented ring cohomology theory.
\begin{enumerate}
\item For $i:N\to M$ a closed immersion in $\Sm/k$, the map $i_*:A(N)\to A(M)$ defined above agrees with the map $i_{gys}$ defined in \cite[\S 4.2]{Panin2}.
\item  Let $i:N\to M$ be a closed immersion in $\Sm/k$, $Y$ a  closed subset of $N$, $X$ a closed subset of $M$ such that $i(Y)\subset X$. Then for $Z\subset M$ a closed subset,  $i_*:A_Y(N)\to A_X(M)$ is an $A_Z(M)$-module homomorphism   (in the sense of definition~\ref{def:IntSupp}(2)).
\item Let $i_1:N\to M$, $i_2:P\to N$ be closed immersions in $\Sm/k$, $X\subset M$, $Y\subset N$ and $Z\subset P$ closed subsets with $i_1(X)\subset Y$, $i_2(Y)\subset Z$. Then
\[
(i_1\circ i_2)_*=i_{1*}\circ i_{2*}:A_Z(P)\to A_X(M).
\]
\item Let $N_1, N_2$ be in $\Sm/k$, and let $j_i:N_i\to N:=N_1\amalg N_2$ be the canonical inclusions, $i=1,2$. Let $i:N\to M$ be a closed immersion in $\Sm/k$, let $Y_i\subset N_i$ be a closed subset, $i=1,2$, and let $X\subset M$ be a closed subset containing $i(Y_1\amalg Y_2)$. Let $i_j$ be the restriction of $i$ to $N_j$, $j=1,2$. Then 
\[
i_*=i_{1*}\circ j_1^*+i_2\circ j_2^*:A_{Y_1\amalg Y_2}(N_1\amalg N_2)\to A_X(M).
\]
\item Let $i:(N,Y)\to (M,X)$ be a morphism in $\SP'$ such that $i:N\to M$ is a closed immersion, and let  $g:(\tilde{M},\tilde{X})\to (M,X)$ be a morphism in $\SP$. Let $\tilde{N}:=N\times_M\tilde{M}$, $\tilde{g}:\tilde{N}\to N$, $\tilde{i}:\tilde{N}\to \tilde{M}$ be the projections, and let $\tilde{Y}:=\tilde{f}{-1}(Y)$. Suppose in addition that $\tilde{N}$ is in $\Sm/k$ and the square
\[
\xymatrix{
\tilde{N}\ar[d]_{\tilde{g}}\ar[r]^{\tilde{i}}&\tilde{M}\ar[d]^g\\
N\ar[r]_i&M}
\]
is transverse. Then the diagram
\[
\xymatrix{
A_{\tilde{Y}}(\tilde{N})\ar[r]^{\tilde{i}_*}&A_{\tilde{X}}(\tilde{M})\\
A_Y(N)\ar[u]^{\tilde{g}^*}\ar[r]_{i_*}&A_X(M)\ar[u]_{g^*}}
\]
commutes.
\item For $M\in\Sm/k$ with closed subsets $Y\subset X$, we have
\[
\id_{M*}=\id_M^*:A_Y(M)\to A_X(M).
\]
\item  Let $i:(N,Y)\to (M,X)$ be a morphism in $\SP'$ such that $i$ is a closed immersion. Let $M\setminus Y\to M$ be the inclusion. Then the sequence
\[
A_Y(N)\xrightarrow{i_*}A_X(M)\xrightarrow{j^*}A_{X\setminus Y}(M\setminus Y)
\]
is exact.
\item Let $i:N\to M$ be a closed immersion in $\Sm/k$, $Y\subset Y'$ closed subsets of $N$, $X\subset X'$ closed subsets of $M$ such that $i^{-1}(X)\cap Y'=Y$, $i(Y')\subset X'$. Then the diagram
\[
\xymatrixcolsep{40pt}
\xymatrix{
A_{Y'\setminus Y}(N\setminus Y)\ar[r]^-{\del_{N,Y',Y}}\ar[d]_{i_*}&A_Y(N)\ar[d]^{i_*}\\
A_{X'\setminus X}(M)\ar[r]_-{\del_{M,X',X}}&A_X(M)}
\]
commutes.
\end{enumerate}
\end{prop}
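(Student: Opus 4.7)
The core observation is that the definition \eqref{eqn:Gysin} of $i_*$ factors as
\[
A_Y(N)\xrightarrow{th^\nu_Y}A_Y(\nu)\xrightarrow{(i_0^*)^{-1}}A_{Y\times\A^1}(M_t)\xrightarrow{i_1^*}A_Y(M)\xrightarrow{\id_M^*}A_X(M),
\]
where the first four arrows are isomorphisms (the Thom isomorphism by the orientation axioms, and $i_0^*,i_1^*$ by Lemma~\ref{lem:Htpy}), and only the last arrow depends on the support $X$. Thus almost every statement reduces to a compatibility between the deformation diagram, the Thom isomorphism, and the natural maps of the ring cohomology theory. I would prove (1), (4), (6) as essentially formal consequences of this factorization; (5) by a base-change of the deformation diagram; (2) from (5); (7), (8) from the factorization together with the triple sequence of Remark~\ref{rems:Cohomology}(1); and (3) last, as the only step requiring genuinely new work.

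\textbf{Formal statements and base change.} Claim (1) is immediate since for $Y=N$, $X=M$ the factorization literally repeats Panin's construction of $i_{gys}$. Claim (6) follows because, when $i=\id_M$, the normal bundle is the zero bundle and the first four arrows collapse to the identity, leaving only the change-of-supports map. Claim (4) reduces to the fact that $\nu_{N_1\amalg N_2/M}=\nu_{N_1/M}\amalg\nu_{N_2/M}$, that $M_t$ splits accordingly along each component, and that the Thom class is additive over disjoint unions. For claim (5), transversality of the square implies $\tilde\nu=\tilde g^*\nu$ and the natural map $\tilde M_t\to M_t$ from the deformation space of $\tilde N\subset\tilde M$ is again a transverse pullback; the required commutativity then follows from naturality of the Thom isomorphism under vector-bundle pullbacks together with the $\SP$-naturality of $i_0^*,i_1^*$. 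Claim (2) is then deduced from (5) by applying it to the transverse diagonal square
\[
\xymatrix{
N\ar[d]_i\ar[r]^-{(\id,i)}&N\times M\ar[d]^{i\times\id}\\
M\ar[r]_-\delta&M\times M
}
\]
in exactly the manner of Remark~\ref{rem:ProjForm}, once one verifies that $i\times\id$ admits a product Gysin $(i\times\id)_*=i_*\times\id$; the latter is an easy consequence of the construction, since the normal bundle and deformation space for $i\times\id$ are products.

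\textbf{The composition law.} I expect claim (3), functoriality for $i_1\circ i_2$, to be the main obstacle. The standard approach is a double deformation to the normal bundle: build a two-parameter deformation space whose fibers at $(s,t)=(1,1),(0,1),(0,0)$ are respectively $M$, the normal bundle $\nu_{N/M}$ containing $P$, and $\nu_{P/M}$, and interpolate to $\nu_{N/M}|_P$ at $(1,0)$ using the short exact sequence $0\to\nu_{P/N}\to\nu_{P/M}|_P\to\nu_{N/M}|_P\to 0$ and the multiplicativity of the Thom isomorphism. This is Panin's argument in \cite[\S4.2]{Panin2}; the only extra work is to track the supports $Z\subset P$, $Y\subset N$, $X\subset M$ through the two-parameter diagram, which is harmless because at every stage the deformation construction is natural in closed subsets. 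I would run Panin's argument verbatim, replacing each occurrence of $A(-)$ by $A_?(-)$ with the appropriate support, and invoking Lemma~\ref{lem:Htpy} to know the relevant pullbacks remain isomorphisms.

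\textbf{Localization and boundary.} For claim (7), the composition $A_Y(N)\xrightarrow{i_*}A_X(M)\xrightarrow{j^*}A_{X\setminus Y}(M\setminus Y)$ factors as
\[
A_Y(N)\iso A_Y(M)\xrightarrow{\id_M^*}A_X(M)\xrightarrow{j^*}A_{X\setminus Y}(M\setminus Y),
\]
using the factorization above; exactness at $A_X(M)$ is then exactly the triple sequence for $(M,X,Y)$ from Remark~\ref{rems:Cohomology}(1). For claim (8), one observes that each of $th$, $i_0^*$, $i_1^*$, $\id_M^*$ is natural for morphisms in $\SP$ and hence produces a ladder between the triple long exact sequences for $(N,Y',Y)$ and $(M,X',X)$; the boundary maps commute because they are part of these naturally compatible triangles. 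Once claims (1)--(6) are in hand, claims (7) and (8) are essentially bookkeeping; the only point requiring care in (8) is verifying that the hypothesis $i^{-1}(X)\cap Y'=Y$ is exactly what makes the restriction of $i$ to $N\setminus Y$ land in $M\setminus X$ on the supports, so the bottom-row pushforward is well-defined in the sense of definition~\ref{def:IntSupp}(6).
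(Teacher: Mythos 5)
Your overall strategy matches the paper's: for parts (1) through (7) the paper simply says to follow Panin's proofs from \cite[\S 4.4]{Panin2} adding supports, and your more detailed roadmap for (1), (3)--(7) is a faithful expansion of that. Your proposed route to (2)---proving the external product identity $(i\times\id)_*=i_*\times\id$ directly from the construction and then invoking (5) together with the diagonal trick of remark~\ref{rem:ProjForm}---is a clean alternative to simply transcribing Panin's projection formula argument, and it works because the two are equivalent modulo base-change, as the remark itself observes.

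The one place where the proposal is too quick is (8). You assert that each of $th$, $i_0^*$, $i_1^*$, $\id_M^*$ is ``natural for morphisms in $\SP$'' and that this produces a ladder between the triple long exact sequences. That is fine for $i_0^*$, $i_1^*$, and $\id_M^*$, which are genuinely pull-backs along $\SP$-morphisms, so the triple boundary commutes with them by the naturality axiom. But the Thom isomorphism $th^\nu_Y:A_Y(N)\to A_Y(\nu)$ is \emph{not} a pull-back in $\SP$, and the orientation axiom that $th$ commutes with pull-backs says nothing directly about its interaction with the boundary $\del_{N,Y',Y}$. The paper handles this by invoking Panin's Thom classes theory \cite[Lemma 3.7.2]{Panin2}: the orientation isomorphism factors as $p^*$ followed by $\cup\, th(\nu)$ for an even-degree class $th(\nu)\in A_N(\nu)$, and the required square commutes because the triple boundary is natural with respect to $p^*$ and is an $A$-module map in the sense of the partial Leibniz rule, so it commutes with $\cup\, th(\nu)$. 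This decomposition is the genuine content of (8). The paper also first reduces to the case $i^{-1}(X)=Y$ (replacing $Y$ by $i^{-1}(X)$ and $Y'$ by $Y'\cup i^{-1}(X)$) and then uses (3) and (6) to peel off the $\id_M^*$ step, so that only the ``minimal-support'' Gysin map $A_Y(N)\to A_{i(Y)}(M)$ needs to be compared with boundary maps; this bookkeeping is worth spelling out explicitly rather than burying it in the word ``ladder''.
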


\begin{proof} (1) follows from the definitions. The proofs of (2)-(7) are exactly as the proofs given in \cite[\S 4.4]{Panin2} of the analogous statements without support, altered by adding in the supports in the notation.

For (8), we may replace $Y'$ with $Y'\cup i^{-1}(X)$ and $Y$ with $i^{-1}(X)$; changing notation, we may assume that $i^{-1}(X)=Y$. Use (1) and (6) to factor $i_*:A_Y(N)\to A_X(M)$ as the composition
\[
A_Y(N)\xrightarrow{i_*}A_Y(M)\xrightarrow{\id_M^*}A_X(M)
\]
and similarly factor  $i_*:A_{Y'\setminus Y}(N\setminus Y)\to A_{X'\setminus X}(M\setminus X)$ as
\[
A_{Y'\setminus Y}(N\setminus Y)\xrightarrow{i_*}A_{Y'\setminus Y}(M\setminus Y)\xrightarrow{j^*}A_{X'\setminus X}(M\setminus X).
\]
Since   the long exact sequence of a triple is natural with respect to pull-back, this reduces us to the case $X=Y$, $X'=Y'$. 

Panin shows \cite[Lemma 3.7.2]{Panin2} that there is a ``Thom classes theory" on $A$, i.e., for each vector bundle $p:E\to M$, $M\in\Sm/k$, an even-degree element $th(E)\in A_M(E)$, such that the orientation isomorphism $th_X^E:A_X(M)\to A_X(E)$ is given by the composition
\[
A_X(M)\xrightarrow{p^*}A_{p^{-11}(X)}(E)\xrightarrow{\cup th(E)}A_X(E).
\]
The classes $th(E)$ satisfy additional properties (see \cite[Definition 3.7.1]{Panin2}), in particular, for $f:N\to M$, we have $f^*(th(E))=th(f^*E)$.  Let $p:\nu\to N$ be the normal bundle of $N$ in $M$, and let $j:N\setminus Y\to N$ be the inclusion. Since the boundary map in the long exact sequence of a triple $(M,X',X)$ is natural with respect to pull-backs and is an $A_{X'}(M)$-module map,  this shows that the diagram
\[
\xymatrixcolsep{40pt}
\xymatrix{
A_{Y'\setminus Y}(N\setminus Y)\ar[d]_{\del_{N,Y',Y}}\ar[r]^-{th^{j^*\nu}_{Y'\setminus Y}}
&A_{Y'\setminus Y}(j^*\nu)\ar[d]^{\del_{\nu,Y',Y}}\\
A_Y(N)\ar[r]_-{th^\nu_Y}&A_Y(\nu)}
\]
commutes.  Looking at the definition \eqref{eqn:Gysin} of the Gysin map, this commutativity, together with  the naturality of the long exact sequence of a triple with respect to pull-back, finishes the proof of (8).
\end{proof}

\begin{rem}\label{rem:GysinIso} Let $i:N\to M$ be a closed immersion in $\Sm/k$, $Y\subset N$ a closed subset, $X=i(Y)$. Then
\[
i_*:A_Y(N)\to A_X(M)
\]
is an isomorphism. Indeed, 
\[
i_*:=i_1^*\circ(i_0^*)^{-1}\circ th^\nu_Y;
\]
$th^\nu_Y$ is  isomorphism by the definition of an orientation, and $i_0^*, i_1^*$ are isomorphisms by lemma~\ref{lem:Htpy}.
\end{rem}

\noindent
{\bf Step 2}: {\em The case of a projection}. This portion relies on the formal group law associated to an oriented theory. We sketch the main points here, following \cite[\S3.9]{Panin2}. 

We recall that an oriented theory $A$ satisfies the {\em projective bundle formula} \cite[Theorem 3.3.1]{Panin2}: For $M\in\Sm/k$, 
\[
A(\P^n\times M)\cong A(M)[t]/(t^{n+1})
\]
(with $t$ in even degree) the isomorphism sending $t$ to $c_1(\sO(1))$. Here $L\mapsto c_1(L)$ is the Chern structure associated to the given orientation.

\begin{rem}[\hbox{\cite[Corollary 3.3.8]{Panin2}}] \label{rem:ProjBundleFormSupp} Using the exact sequences of the pairs $(M,X)$ and  $(\P^n\times M,\P^n\times X)$, the projective bundle formula extends to give an isomorphism of $A_{X'}(M)$-modules (for any closed subset $X'$ of $M$ containing $X$)
\[
A_{\P^n\times X}(\P^n\times M)\cong A_X(M)\otimes_{A(M)}A(M)[t]/(t^{n+1})
\]
with $a\otimes t^i$ mapping to $p_2^*(a)\cup c_1(\sO(1))^i$. 
\end{rem}

We set $pt:=\Spec k$. Defining 
\[
A(\P^\infty\times M):=\lim_{\substack{\leftarrow\\N}}A(\P^N\times M)
\]
(using the system of inclusions $\P^N\to \P^{N+1}$ as the hyperplane $X_{N+1}=0$), we have
\[
A(\P^\infty\times M)\cong A(M)[[t]].
\]
Similarly,
\[
A(\P^\infty\times \P^\infty\times M)\cong A(M)[[u,v]],
\]
this latter isomorphism sending $u$ to $c_1(p_1^*\sO(1))$, $v$ to $c_1(p_2^*\sO(1))$. Thus, there is a well-defined element $F_A(u,v)\in A(pt)[[u,v]]$ with
\[
F_A(c_1(p_1^*\sO(1)), c_1(p_2^*(\sO(1)))=c_1(p_1^*\sO(1)\otimes p_2^*(\sO(1)).
\]
By Jouanolou's trick and functoriality, this gives
\[
F_A(c_1(L),c_1(L'))=c_1(L\otimes L')
\]
for each pair of line bundles $L, L'$ on some $M\in\Sm/k$. The fact that the set of isomorphism class of line bundles on $M\in\Sm/k$ is a group under tensor product  directly implies that $F_A(u,v)$ defines a (commutative, rank one) formal group law over $A(pt)$:
\begin{enumerate}
\item $F_A(u,0)=F_A(0,u)=u$.
\item $F_A(u,v)=F_A(v,u)$
\item $F_A(F_A(u,v), w)=F_A(u,F_A(v,w))$
\end{enumerate}
with inverse given by the power series $I_A(t)\in A(pt)[[t]]$ corresponding to $c_1(\sO(-1))$ under the isomorphism $A(\P^\infty)\cong A(pt)[[t]]$. 

\begin{rem} Since $c_1(L)$ has even degree, all the coefficients of $F_A(u,v)$ and $I_A(t)$ have even degree, so we actually have a formal group law over the commutative ring $A^{ev}(pt)$.
\end{rem}

For a commutative ring $R$, let $\Omega_{R[[t]]/R}^{ctn}:=\Omega_{R[t]/R}\otimes_RR[[t]]$. Given a commutative formal group law $F(u,v)\in R[[u,v]]$ over $R$, there is a unique normalized invariant differential form $\omega_F\in \Omega_{R[[t]]/R}^{ctn}$. We write $\omega_A$ for $\omega_{F_A}\in \Omega^{ctn}_{A^{ev}(pt)[[t]]/A^{ev}(pt)}$. Using the canonical generator $dt$ for $\Omega^{ctn}_{A^{ev}(pt)[[t]]/A^{ev}(pt)}$, we have
\[
\omega_A=(1+\sum_{n\ge1} a_nt^n)dt =  dt+a_1tdt +\ldots
\]
with $a_n\in A^{ev}(pt)$ (here ``normalized" means the first term is $dt$, i.e., $a_0=1$).

\begin{definition}We denote the projection $\P^n\times M\to M$ by $p^n$.  For a smooth pair $(M,X)$, define the map
\[
p_*^n:A_{\P^n\times X}(\P^n\times M)\to A_X(M)
\]
by 
\[
p_*^n(a\otimes t^i):= a_{n-i}\cdot a.
\]
Here we use the isomorphism $A_{\P^n\times X}(\P^n\times M)\cong A_X(M)\otimes_{A(M)}A(M)[t]/(t^{n+1})$ given by the projective bundle formula, and the canonical $A^{ev}(pt)$-module structure on $A_X(M)$.
\end{definition}

\begin{rem}\label{rem:PaninAgree} If we forget supports, the map $p^n_*$ agrees with the map $p^n_{quil}$ defined in \cite[\S4.3]{Panin2}.
\end{rem}

\begin{rem}\label{rem:ProjectionModule} Since the coefficients $a_n$ of $\omega_A$ are in $A^{ev}$, $p^n_*$ is an $A_Z(M)$-module map for all closed subsets $Z\subset M$ (in the sense of definition~\ref{def:IntSupp}(2)); in particular, $p^n_*$ is  an $A(M)$-module map.
\end{rem}

\begin{rem} Using the evident modification of the push-forward map $p^n_*$, we have maps
\[
p^n_*:A_{X\times \P^n\times Y}(M\times\P^n\times N)\to A_{X\times Y}(M\times N)
\]
for smooth pairs $(M,X)$ and $(N,Y)$. Since the basis elements in the projective bundle formula are of even degree, we need not worry about the order of the factors.
\end{rem}

\begin{prop}\label{prop:Projection} Let $A$ be a $\Z/2$-graded oriented ring cohomology theory.
\begin{enumerate}
\item For $(M,X)$ a smooth pair, the following diagram commutes
\[
\xymatrix{
A_{\P^n\times\P^m\times X}(\P^n\times \P^m\times M)\ar[r]^-{p^m_*}\ar[d]_{p^n_*}&A_{\P^n\times X}(\P^n\times M)\ar[d]^{p^n_*}\\
A_{\P^m\times X}(\P^m\times M)\ar[r]_-{p^m_*}&A_X(M)}
\]
\item Let $f:(N,Y)\to (M,X)$ be a morphism in $\SP$. Then the diagram 
\[
\xymatrix{
A_{\P^n\times X}(\P^n\times M)\ar[d]_{p^n_*}\ar[r]^{f^*}&
A_{\P^n\times Y}(\P^n\times N)\ar[d]^{p^n_*}\\
A_X(M)\ar[r]_{f^*}&A_Y(N)}
\]
commutes.
\item Let $i:\P^n\to \P^m$ be a linear closed immersion, $(M,X)$ a smooth pair. Then the diagram
\[
\xymatrix{
A_{\P^n\times X}(\P^n\times M)\ar[r]^{(i\times\id)_*}\ar[d]_{p^n_*}&
A_{\P^m\times X}(\P^m\times M)\ar[d]^{p^m_*}\\
A_X(M)\ar@{=}[r]&A_X(M)}
\]
commutes.
\item Let $i:N\to M$ be a closed immersion, $X\subset M$, $Y\subset N$ closed subsets with $i(Y)\subset X$. Then the diagram
\[
\xymatrix{
A_{\P^n\times Y}(\P^n\times N)\ar[r]^{(\id\times i)_*}\ar[d]_{p^n_*}&A_{\P^n\times X}(\P^n\times M)
\ar[d]^{p^n_*}\\
A_Y(N)\ar[r]_{i_*}&A_X(M)}
\]
commutes.
\item Let $s:M\to \P^n\times M$ be a section to the projection, and let $X\subset M$ be a closed subset. Then $p^n_*\circ s_*=\id_{A_X(M)}$.
\item Let $X\subset X'$ be closed subsets of $M\in\Sm/k$. Then the diagram
\[
\xymatrixcolsep{70pt}
\xymatrix{
A_{\P^n\times (X'\setminus X)}(\P^n\times(M\setminus X))
\ar[d]_{p^n_*}\ar[r]^-{\del_{\P^n\times M,\P^n\times X',\P^n\times X}}
&A_{\P^n\times X}(\P^n\times M)\ar[d]^{p^n_*}\\
A_{X'\setminus X}(M\setminus X)\ar[r]_{\del_{M,X',X}}&A_X(M)
}
\]
commutes.
\end{enumerate}
\end{prop}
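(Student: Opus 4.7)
The proof proceeds by verifying each of the six parts directly from the definition of $p^n_*$ given by the projective bundle formula of remark~\ref{rem:ProjBundleFormSupp}, using throughout that the coefficients $a_k$ of the invariant differential $\omega_A$ lie in $A^{ev}(pt)$, so that $p^n_*$ is $A_Z(M)$-linear for every closed $Z\subset M$ (remark~\ref{rem:ProjectionModule}).

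Statements (1), (2) and (6) are essentially bookkeeping. For (1), writing a basis element as $a\otimes u^iv^j\in A_X(M)\otimes_{A(M)}A(M)[u,v]/(u^{n+1},v^{m+1})$, both $p^n_*\circ p^m_*$ and $p^m_*\circ p^n_*$ send it to $a_{n-i}a_{m-j}\cdot a$, so commutativity is immediate. For (2), the projective bundle formula is natural with respect to $f^*$, and coefficient extraction against elements of $A^{ev}(pt)$ commutes with $f^*$. For (6), the boundary $\del_{M,X',X}$ is $A_{X'}(M)$-linear and natural under pull-back, and respects the projective bundle decomposition on the triple $(\P^n\times M,\P^n\times X',\P^n\times X)$, so it commutes with the coefficient extraction that defines $p^n_*$.

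The substantive parts are (3), (4) and (5). For (4), the projection formula (proposition~\ref{prop:Gysin}(2)) applied to $\id\times i$ gives $(\id\times i)_*(p_2^*(b)\cup u^j)=p_2^*(i_*(b))\cup u^j$, so $(\id\times i)_*$ acts on the projective bundle basis by applying $i_*$ to coefficients; both sides of the diagram then equal $a_{n-j}\cdot i_*(b)$ by $A^{ev}(pt)$-linearity of $i_*$. For (3), I reduce by part (1) and the functoriality of Gysin push-forwards (proposition~\ref{prop:Gysin}(3)) to the case of a hyperplane embedding $i:\P^{m-1}\hookrightarrow\P^m$, where one must verify that $(i\times\id)_*(a\otimes u^j)=a\otimes v^{j+1}$. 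By the projection formula with supports this reduces to the identification $i_*(1)=c_1(\sO(1))=v\in A(\P^m)$, which is built into Panin's Thom-class/Gysin framework via the defining relation $c_1(L)=s^*s_*(1)$ for the zero section of a line bundle, applied to $\sO_{\P^m}(1)$. Granted this, $p^m_*(a\otimes v^{j+1})=a_{m-j-1}\cdot a=a_{n-j}\cdot a=p^n_*(a\otimes u^j)$. For (5), I reduce using $\A^1$-invariance of $A$ applied to the parameter space of sections, together with the naturality just established in (2) and (4), to the case of a rational-point section $s_{x_0}(m)=(x_0,m)$; iterating the hyperplane computation of (3) $n$ times gives $(s_{x_0})_*(1)=u^n$, so $(s_{x_0})_*(a)=p_2^*(a)\cup u^n$ by the projection formula, and $p^n_*$ returns $a_0\cdot a=a$.

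The main obstacle is the hyperplane identification used in (3): one must show that the Gysin push-forward of a linear hyperplane, defined via deformation to the normal bundle in~\eqref{eqn:Gysin}, coincides with multiplication by the first Chern class of $\sO(1)$. This is the only real bridge between the formal-group-law-theoretic definition of $p^n_*$ through the invariant differential $\omega_A$ and the deformation-theoretic definition of the Gysin map $i_*$; once it is available from Panin's theory via part~(1), every remaining assertion reduces to formal manipulations with the projective bundle formula and the $A^{ev}(pt)$-module structure shared by $p^n_*$ and $i_*$.
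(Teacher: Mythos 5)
Your proof of (4) coincides with the paper's argument almost verbatim, and your sketches of (1), (2) and (6) are correct and in the same spirit (everything reduces to $A^{ev}(pt)$-linearity of $p^n_*$, naturality of the projective bundle formula, and the partial Leibniz rule). The paper itself simply cites Panin's \S4.5 for (1)--(3), so your direct computations for those parts are a genuinely more explicit route; for (3), however, drop the appeal to part~(1), which plays no role --- the reduction to a single hyperplane $i\colon\P^{m-1}\hookrightarrow\P^m$ uses only functoriality of the Gysin map (proposition~\ref{prop:Gysin}(3)). The identity $i_*(1)=c_1(\sO(1))$ you invoke there is correct, but it is not literally the defining relation $c_1(L)=s^*s_*(1)$: one needs in addition the transverse-pullback axiom applied to the section $\sigma$ of $\sO(1)$ with zero locus $\P^{m-1}$, together with $\A^1$-invariance to identify $\sigma^*$ and $s^*$ after forgetting supports.

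The genuine problem is (5). You propose to ``reduce by $\A^1$-invariance applied to the parameter space of sections'' to a constant section $s_{x_0}$. That reduction is not available: sections of $\P^n\times M\to M$ correspond to morphisms $M\to\P^n$, which carry a nontrivial degree invariant and are not, in general, connected to the constant map by an $\A^1$-family. A concrete failure: take $M=\P^1$ and $s=\Delta\colon\P^1\to\P^1\times\P^1$. Since $\Delta$ is the transverse zero locus of a section of $p_1^*\sO(1)\otimes p_2^*\sO(1)$, one has $\Delta_*(1)=F_A(t_1,t_2)=t_1+t_2+a_{11}t_1t_2$ in $A(\P^1\times\P^1)$, and
\[
p^1_*(\Delta_*(1))=a_0+(a_1+a_{11})\,t_2 .
\]
This equals $1$ only because the normalized invariant differential has $a_0=1$ and $a_1=-a_{11}$. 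For the constant section you get $s_{x_0*}(1)=t_1^n$ and only ever see $a_0=1$; the whole formal-group-law interaction that the invariant differential is designed to absorb never enters, so your argument cannot establish (5) for general $s$. The paper's proof avoids this by first invoking Panin's computation of $p^n_*(s_*(1))=1$ in the case \emph{without} supports (\cite[\S4.6]{Panin2}), where this formal-group-law bookkeeping is carried out, and then extending to $A_X(M)$ by the projection formula exactly as you do in your final line. You should do the same: reduce $p^n_*\circ s_*=\id$ on $A_X(M)$ to $p^n_*(s_*(1))=1\in A(M)$ via the projection formula, and cite Panin (or reprove the identity using the invariance of $\omega_A$, not homotopy of sections).
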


\begin{proof} The proofs of (1)-(3) are exactly as the proofs of the corresponding properties in \cite[\S4.5]{Panin2}, adding the supports throughout.  We give a proof of (4) that is different from the approach used in \cite{Panin2}. 

By the projective bundle formula, it suffices to check the commutativity on elements of 
$A_{\P^n\times Y}(\P^n\times N)$ of the form $t^m\times a=p^{n*}(t^m)\cup p_Y^*(a)$, $t=c_1(\sO_{\P^n}(1))$. Since the Gysin map $(\id\times i)_*$ is a $A(\P^n\times M)$-module map, we have
\begin{align*}
(\id\times i)_*(t^m\times a)&=p^{n*}(t^m)\cup(\id\times i)_*(p_N^*(a))\\
&=p^{n*}(t^m)\cup p_M^*(i_*(a)))\\
&=t^m\times i_*(a),
\end{align*}
the second identity following from proposition~\ref{prop:Gysin}(5). Thus
\begin{align*}
p^n_*((\id\times i)_*(t^m\times a))&=p^n_*(t^m\times i_*(a))\\
&=a_{n-m}\cdot i_*(a).
\end{align*}
On the other hand
\begin{align*}
i_*(p^n_*(t^m\times a))&=i_*(a_{n-m}\cdot a)\\
&=a_{n-m}\cdot i_*(a),
\end{align*}
the second identity following because $i_*$ is an $A(M)$-module map, hence an $A(pt)$-module map. This proves (4).

For (5), the case without supports (proven in \cite[\S 4.6]{Panin2}) gives in particular the identity
\[
p^n_*(s_*(1))=1\in A(M)
\]
where $1\in A(M)$ is the identity. Now take an arbitrary element $a\in A_X(M)$ and write $p$ for $p^n$. Using the fact that both $s_*$ and $p_*$ satisfy the projection formula (for $i_*$, this is just property (2) of proposition~\ref{prop:Gysin} and for $p^n_*$, this is remark~\ref{rem:ProjectionModule}), we have
\begin{align*}
p_*(s_*(a))&=p_*(s_*(s^*p^*(a)\cup 1))\\
&=p_*(p^*(a)\cup s_*(1))\\
&=a\cup p_*(s_*(1))\\
&=a\cup 1=a.
\end{align*}

Finally, (6) follows from the partial Leibniz rule for $\del$, which implies that $\del_{X,Z',Z}$  and $\del_{\P^n\times X\P^n\times Z',\P^n\times Z}$ are $A(pt)$-module maps, together with the naturality of $\del$ with respect to pull-back. Thus,
\begin{align*}
p^n_*(\del_{\P^n\times M,\P^n\times X',\P^n\times X}(t^m\times a))&=
p^n_*(t^m\cup\del_{\P^n\times M,\P^n\times X',\P^n\times X}(p^{n*}(a)))\\
&=p^n_*(t^m\cup p^{n*}(\del_{M,  X', X}(a)))\\
&=a_{n-m}\cdot \del_{M,  X', X}(a)\\
&= \del_{M,  X', X}(a_{n-m}\cdot a)\\
&=\del_{M,  X', X}(p^n_*(t^m\times a)).
\end{align*}
\end{proof}
\ \\
\\
{\bf Step 3}: {\em The general case}.
Let $f:(N,Y)\to (M,X)$ be a   morphism in $\SP'$. Factor $f:M\to N$ as $f=p\circ i$, with $i:N\to \P^n\times M$ a closed immersion, and $p=p^n:\P^n\times M\to M$ the projection. Define $f_*:A_Y(N)\to A_X(M)$ as the composition
\[
A_Y(N)\xrightarrow{i_*}A_{\P^n\times X}(\P^n\times M)\xrightarrow{p_*}A_X(M).
\]

\begin{thm}\label{thm:Pushforward} Let $A$ be a $\Z/2$-graded oriented ring cohomology theory on $\SP$. 
\begin{enumerate}
\item For a   morphism $f:(N,Y)\to (M,X)$ in $\SP'$, the morphism $f_*:A_Y(N)\to A_X(M)$ does not depend on the choice of factorization $f=p\circ i$.
\item For a   morphism $f=i:(N,Y)\to (M,X)$ in $\SP'$ with $i:N\to M$ a closed immersion, $f_*$ agrees with the Gysin morphism defined in  Step 1. For $f=p^n:(\P^n\times M,\P^n\times X)\to (M,X)$ the projection, $f_*$ agrees with the map $p^n_*$ defined in Step 2. 
\item For a projective morphism $f:N\to M$, the map $f_*:A(N)\to A(M)$ agrees with the map $f_*$ defined in \cite[\S4.7]{Panin2}.
\item The assignment $[f:(N,Y)\to (M,X)]\mapsto f_*:A_Y(N)\to A_X(M)$ defines an integration with supports on $A$ (definition~\ref{def:IntSupp}), subjected to the given orientation on $A$.
\end{enumerate}
\end{thm}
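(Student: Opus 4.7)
The plan is to mimic the structure of Panin's argument in \cite[\S4.7]{Panin2} while carrying supports through at every step, using Propositions~\ref{prop:Gysin} and~\ref{prop:Projection} as the technical inputs. Parts (2) and (3) are essentially immediate once (1) is in hand: for a closed immersion $f=i$, use the factorization $f=p^0\circ i$ via the isomorphism $p^0:\P^0\times M\cong M$, noting that $p^0_*=\id$ from the normalization $a_0=1$ in the definition of $p^n_*$; for $f=p^n$, take the factorization $p^n\circ\id_{\P^n\times M}$ and invoke Proposition~\ref{prop:Gysin}(6) to see $\id_*=\id^*$. The identification with Panin's push-forward without supports then follows from Proposition~\ref{prop:Gysin}(1) and Remark~\ref{rem:PaninAgree}.

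For part (1), the independence of the factorization $f=p\circ i$, I would compare any two such factorizations $f=p^n\circ i_1$ and $f=p^m\circ i_2$ via the joint closed immersion $i:=(i_1',i_2',f):N\to\P^n\times\P^m\times M$, where $i_j'$ is the composition of $i_j$ with the projection to the projective factor. Let $q_1:\P^n\times\P^m\times M\to\P^n\times M$ and $q_2:\P^n\times\P^m\times M\to\P^m\times M$ be the projections away from $\P^m$ and $\P^n$; then $q_j\circ i=i_j$. Proposition~\ref{prop:Gysin}(3) gives $(i_j)_*=(q_j)_*\circ i_*$, while Proposition~\ref{prop:Projection}(1) (the Fubini compatibility) gives $(p^n)_*\circ(q_1)_*=(p^m)_*\circ(q_2)_*$ on cohomology with supports. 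Composing, $(p^n)_*\circ(i_1)_*=(p^m)_*\circ(i_2)_*$.

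For part (4), the six axioms of Definition~\ref{def:IntSupp} are checked in turn. The functoriality axiom is the key combinatorial step: given factorizations $f=p^n\circ i$ and $g=p^m\circ j$ of composable morphisms, one writes $i\circ p^m=p^m\circ(\id_{\P^m}\times i)$ on $\P^m\times N$, so $f\circ g$ factors as the iterated projection composed with $(\id_{\P^m}\times i)\circ j$. Proposition~\ref{prop:Projection}(4) then slides $i_*$ past $(p^m)_*$ and Proposition~\ref{prop:Projection}(1) recombines the two projections, yielding $(f\circ g)_*=f_*\circ g_*$; part (1) removes the residual choice in intermediate factorizations. The projection formula follows from Proposition~\ref{prop:Gysin}(2) and Remark~\ref{rem:ProjectionModule} applied to the two halves of a factorization. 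Transverse base change (axiom (3)) reduces to Proposition~\ref{prop:Gysin}(5) for closed immersions and Proposition~\ref{prop:Projection}(2) for projections, invoking Lemma~\ref{lem:PushPull} to handle intermediate factors; the compatibility with the product projection from $\P^n$ (axiom (4)) is a direct consequence of Proposition~\ref{prop:Projection}(2). Axiom (5), that $\id_*=\id^*$ when varying supports, is Proposition~\ref{prop:Gysin}(6). Compatibility with boundary maps (axiom (6)) follows from Propositions~\ref{prop:Gysin}(8) and~\ref{prop:Projection}(6), strung together along the factorization. Finally, that the resulting integration is subjected to $\omega$ is, modulo Proposition~\ref{prop:Gysin}(1), the statement in the case without supports, contained in \cite[Theorem 4.1.4]{Panin2}.

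The main obstacle I anticipate is the functoriality step in part (4): sliding a Gysin push-forward past a projection push-forward demands Proposition~\ref{prop:Projection}(4) in exactly the support pattern that arises, and the support data on the intermediate scheme $\P^m\times N$ must be arranged so that Lemma~\ref{lem:PushPull} applies and the boundary triples required for axiom (6) line up coherently across both factors. Once this single composition-of-factorizations computation is in place, the remaining verifications transfer from Panin's arguments with only notational changes tracking supports.
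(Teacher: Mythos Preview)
Your proposal is correct and follows the same route as the paper: both defer to Panin's argument in \cite[\S4.7]{Panin2} with supports carried through, and assemble the axioms of Definition~\ref{def:IntSupp} from Propositions~\ref{prop:Gysin} and~\ref{prop:Projection} exactly as you indicate. Two small corrections: in part~(1), the identity $(i_j)_*=(q_j)_*\circ i_*$ is not an instance of Proposition~\ref{prop:Gysin}(3) since $q_j$ is a projection, not a closed immersion---you should instead factor $i$ as a section to $q_j$ followed by $\id\times i_j$ and use Propositions~\ref{prop:Projection}(4) and~(5); and the verification that the integration is subjected to $\omega$ requires the projection-formula computation $s^*(s_*(a))=s^*(p^*(a)\cup s_*(1))=a\cup c_1(L)$ to pass from the unsupported identity $s^*(s_*(1))=c_1(L)$ to general $a\in A_X(M)$, which the paper writes out explicitly.
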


\begin{proof} The proof of (1) is exactly as in the proof of the analogous result \cite[Theorem 4.7.1]{Panin2}, adding the supports where needed. The statement (2) follows directly from (1), as we may take $n=0$ if $f$ is a closed immersion, and $i$ the identity (and $i_*=\id$ as well) if $f=p^n$.  (3) follows from proposition~\ref{prop:Gysin}(1) and remark~\ref{rem:PaninAgree}.

For (4), the proofs of (1), (3) and (4) in definition~\ref{def:IntSupp} are exactly as in the proof of 
 \cite[Theorem 4.7.1]{Panin2}, adding the supports. Definition~\ref{def:IntSupp}(2) follows from proposition~\ref{prop:Gysin}(2) and remark~\ref{rem:ProjectionModule}.

 Definition~\ref{def:IntSupp}(5) follows from 
proposition~\ref{prop:Gysin}(6), while (6) follows from proposition~\ref{prop:Gysin}(8) and proposition~\ref{prop:Projection}(6). Thus, 
\[
[f:(N,Y)\to (M,X)]\mapsto f_*:A_Y(N)\to A_X(M)]\mapsto f_*:A_Y(N)\to A_X(M)
\]
 defines an integration with supports on $A$.

To complete the proof, we need only check that the integration with supports is subjected to the given orientation, i.e., that for a line bundle $p:L\to M$ with zero-section $s$, the composition
\[
A_X(M)\xrightarrow{s_*}A_{p^{-1}(X)}(L)\xrightarrow{s^*}A_X(M)
\]
is cup product with $c_1(L)$. By (3) and \cite[Theorem 4.1.4]{Panin2}, this is the case for $X=M$; in particular
\[
s^*(s_*(1))=c_1(L)\in A(M). 
\]
In general, take $a\in A_X(M)$. Then 
\begin{align*}
s^*(s_*(a))&=s^*(s_*(s^*p^*(a)\cup 1))\\
&=s^*(p^*(a)\cup s_*(1))\\
&=a\cup s^*(s_*(1))\\
&=a\cup c_1(L),
\end{align*} 
as desired.
\end{proof}

\begin{proof}[Proof of theorem~\ref{thm:Int}] The existence of an integration with supports subjected to a given orientation $\omega$ on $A$ follows from theorem~\ref{thm:Pushforward}. For the uniqueness, suppose we have two integrations 
\[
(f:(N,Y)\to (M,X))\mapsto f^1_*, f^2_*:A_Y(N)\to A_X(M),
\]
both subjected to the same orientation $\omega$. Let $\omega_A=(1+\sum_{n\ge1}a_nt^n)dt$ be the normalized invariant one-form for the formal group law $F_A$.

By the uniqueness part of \cite[Theorem 4.1.4]{Panin2}, $f^1_*=f^2_*:A(N)\to A(M)$. In particular, taking $q:\P^n\to \Spec k$ to be the structure map, and  letting $t=c_1(\sO_{\P^n}(1))\in A(\P^n)$, we have
\[
q^1_*(t^m)=q^2_*(t^m)=a_{n-m}.
\]

Now let $(M,X)$ be a smooth pair,   take $a\in A_X(M)$, and let $p:\P^n\times M\to M$ be the projection. Then for $i=1,2$, 
\begin{align*}
p^i_*(t^m\times a)&= q_*^i(t^m)\cdot a\\
&=a_{n-m}\cdot a,
\end{align*}
so $p^1_*=p^2_*$. 

Next, consider a closed immersion $i:N\to M$ in $\Sm/k$, and let $Y\subset N$, $X\subset M$ be closed subsets with $i(Y)\subset X$. Suppose $i=\id_M$. Then, by definition~\ref{def:IntSupp}(5), $i_*^1=\id_M^*=i^2_*$; this reduces us to the case $X=i(Y)$.

Since $M$ is quasi-projective, we can find a sequence of smooth closed subschemes
\[
N=N_0\subset N_1\subset\ldots\subset N_r=M
\]
such that $N_{i-1}$ is a smooth codimension one subscheme of $N_i$ for $i=1,\ldots, r$. This reduces us to the case of a codimension one closed immersion. 

Consider the deformation to the normal bundle
\[
\xymatrix{
\nu\ar[r]^{i_0}&M_t&M\ar[l]_{i_1}\\
N\ar[r]_{i_0}\ar[u]^s&N\times\A^1\ar[u]_{\tilde{i}}&N\ar[l]^{i_1}\ar[u]_i\\
Y\ar@{^(->}[u]\ar[r]_{i_0}&Y\times\A^1\ar@{^(->}[u]&Y.\ar[l]^{i_1}\ar@{^(->}[u]}
\]
This gives us the commutative diagram (for $j=1,2$)
\[
\xymatrix{
A_Y(N)\ar[d]_{s^j_*}&A_{Y\times\A^1}(N\times\A^1)\ar[l]_-{i_0^*}\ar[r]^-{i_1^*}\ar[d]_{\tilde{i}^j_*}
&A_Y(N)\ar[d]^{i^j_*}\\
A_Y(\nu)&A_{Y\times\A^1}(M_t)\ar[l]^-{i_0^*}\ar[r]_-{i_1^*}&A_Y(M).
}
\]
 It follows easily from the homotopy property for $A$ that the maps $i_0^*, i_1^*$ in the upper row are isomorphisms; the maps $i_0^*,i_1^*$ in the lower row are isomorphisms by lemma~\ref{lem:Htpy}. Thus, it suffices to show that $s^1_*=s^2_*$.

Using excision, we can replace $\nu$ with the $\P^1$-bundle $\P(\nu\oplus O_N)$. Let $p:\P(\nu\oplus O_N)\to N$ be the projection. Clearly $p_*^j:A_Y(\P(\nu\oplus O_N))\to A_Y(N)$ is inverse to $s^j_*$, $j=1,2$, so it suffices to see that
\[
p_*^1=p_*^2:A_Y(\P(\nu\oplus O_N))\to A_Y(N).
\]
The map $p_*^j$ factors through  the ``enlarge the support map"
\[
\id^*:A_Y(\P(\nu\oplus O_N))\to A_{p^{-1}(Y)}(\P(\nu\oplus O_N))
\]
hence $\id^*$ is injective.  Thus, it suffices to see that the maps
\[
p_*^j:A_{p^{-1}(Y)}(\P(\nu \oplus O_N))\to A_Y(N)
\]
are equal.   

We have the extended projective bundle formula \cite[Corollary 3.3.8]{Panin2}:  Let
\[
\alpha:A_Y(N)\oplus A_Y(N)\to A_{p^{-1}(Y)}(\P(\nu\oplus O_N))
\]
be the map sending $(a, b)$ to $p^*(a)+p^*(b)\cup c_1(\sO(1))$. Then $\alpha$ is an isomorphism. \\
\\
The projection formula implies
\[
p^j_*(\alpha(a,b))=a\cup p^j_*(1_{\P(\nu\oplus O_N)})+b\cup p^j_*(c_1(\sO(1))).
\]
By the uniqueness part of \cite[Theorem 4.1.4]{Panin2}, 
\[
p_*^1=p_*^2:A(\P(\nu\oplus O_N))\to A(N),
\]
hence $p^1_*(1_{\P(\nu\oplus O_N)})=p^2_*(1_{\P(\nu\oplus O_N)})$ and $p^1_*(c_1(\sO(1)))=p^2_*(c_1(\sO(1)))$. Thus, $p^1_*=p_*^2:A_{p^{-1}(Y)}(\P(\nu\oplus O_N))\to
A_Y(N)$. 

Since each projective morphism $f$ factors as $p\circ i$, the two cases of a projection and a closed immersion imply $f_*^1=f_*^2$ for all $f$, completing the proof.
\end{proof}

\section{Algebraic oriented cohomology}\label{sec:Mocanasu}  Mocanasu \cite{Mocanasu}  has considered a version of oriented cohomology, with somewhat different axioms from what we have discussed so far, and has shown that such a theory gives rise to a Borel-Moore homology theory on quasi-projective schemes (over a fixed base-field $k$). Shortly speaking, the Borel-Moore homology theory $H$ corresponding to an oriented ring cohomology theory $A$ is given by 
\[
H(X):=A_X(M)
\]
for any smooth pair $(M,X)$. The main point is to show that this is independent of the choices, both in the smooth ``envelope" $M$ for a given $X$, as well as for morphisms $F:M\to M'$ extending a given projective morphism $f:X\to X'$. In this section, we give a modified version of Mocanasu's notion of an algebraic oriented theory, and  show that the integration with supports defined on an oriented ring cohomology theory satisfies the axioms (assuming that the base-field admits resolution of singularities). We fix a base-field $k$ and an oriented $\Z/2$-graded ring cohomology theory $A$ on $\SP$. We let $\Sch_k$ denote the category of quasi-projective $k$-schemes and $\Sch_k'$ the subcategory with the same objects, but with only the projective morphisms.   We will assume throughout this section that $k$ admits resolution of singularities.

Let $(M,X)$, $(N,Y)$ be smooth pairs, and let $F:M\to N$ be a morphism such that $F(X)\subset Y$ and the restriction $f:X\to Y$ of $F$ is projective. Let   $\sC_F$ be the category of all dense open immersions $j:M\to\bar{M}$, with $\bar{M}\in\Sm/k$, and extensions $\bar{F}:\bar{M}\to N$ such that $\bar{F}$ is projective; a morphism $\mu:(j:M\to\bar{M},\bar{F})\to (j':M\to\bar{M}',\bar{F}')$ is a morphism $\mu:\bar{M}\to\bar{M}'$ with $j'=\mu\circ j$ and $\bar{F}'\circ \mu=\bar{F}$. Note that $\mu$ is necessarily projective and birational. Also, since $f$ is projective, $j(X)$ is closed in $\bar{M}$.

\begin{lem}\label{lem:Filt} The category $\sC_F$ is left filtering, and there is 
 is at most one morphism between any two objects.
\end{lem}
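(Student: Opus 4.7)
The plan is to verify the two assertions separately, with the left-filtering condition broken into its usual two parts, of which one becomes trivial once uniqueness is in hand.

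\emph{Uniqueness.} I first show that there is at most one morphism between any two objects. If $\mu,\mu'\colon (j_1\colon M\hookrightarrow\bar M_1,\bar F_1)\to (j_2\colon M\hookrightarrow\bar M_2,\bar F_2)$ are both morphisms, then $\mu\circ j_1=j_2=\mu'\circ j_1$, so $\mu$ and $\mu'$ coincide on the dense open $j_1(M)\subset\bar M_1$; since $\bar M_2$ is separated, the equalizer is closed and dense, hence all of $\bar M_1$, so $\mu=\mu'$. As a corollary, the ``equalize-from-the-left'' condition for left filteringness is vacuous: any two parallel arrows are already equal.

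\emph{Common predecessor.} Given two objects $(j_i\colon M\hookrightarrow\bar M_i,\bar F_i)$, $i=1,2$, I form the fibre product $P:=\bar M_1\times_N\bar M_2$ and the map $\phi=(j_1,j_2)\colon M\to P$, well defined because $\bar F_1\circ j_1=F=\bar F_2\circ j_2$. The factorization
\[
\phi\colon M\xrightarrow{\Delta_{M/N}} M\times_N M\xrightarrow{j_1\times j_2} P
\]
exhibits $\phi$ as an immersion: the first arrow is a closed immersion because $M$ is separated over $k$ (hence over $N$), and the second is an open immersion. Moreover, writing $U:=j_1(M)\times_N j_2(M)\subset P$, the image $\phi(M)$ is closed in the open $U$, so $\phi(M)$ is locally closed in $P$; its scheme-theoretic closure $W\subset P$ then satisfies $W\cap U=\phi(M)$, exhibiting $M\hookrightarrow W$ as a dense open immersion. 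The two projections $P\to\bar M_i$ are projective as base changes of the projective $\bar F_{3-i}$, and their restrictions $\pi_i\colon W\to\bar M_i$ are projective extensions of the $j_i$.

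\emph{Smoothing and conclusion.} The closure $W$ may fail to be smooth, so I invoke resolution of singularities (available by hypothesis on $k$) to obtain a projective birational morphism $\rho\colon\bar M_{12}\to W$ with $\bar M_{12}$ smooth and $\rho$ an isomorphism over the smooth locus of $W$, which contains $M$. Then $M$ is dense and open in $\bar M_{12}$, the composites $\bar M_{12}\xrightarrow{\rho} W\xrightarrow{\pi_i}\bar M_i$ are projective and extend the $j_i$, and the map $\bar F:=\bar F_i\circ\pi_i\circ\rho\colon\bar M_{12}\to N$ is independent of $i\in\{1,2\}$ by the fibre-product property, projective as a composite of projective maps, and restricts to $F$ on $M$. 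Thus $(\bar M_{12},\bar F)$ is an object of $\sC_F$ admitting morphisms to both $(\bar M_i,\bar F_i)$, completing the verification of left filteringness. I expect the main technical point to be the density/openness claim for $\phi(M)$ in $W$; the rest is a clean application of the fibre-product construction, projectivity under base change, and the resolution hypothesis.
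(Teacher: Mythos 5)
Your uniqueness argument and common-predecessor construction agree, up to cosmetic choices, with the paper's: both rest on density of $M$ in $\bar M$ for uniqueness (two morphisms out of $\bar M_1$ to the separated $\bar M_2$ agreeing on a dense open agree everywhere), and on taking the closure of $(j_1,j_2)(M)$ in a product and resolving singularities for the predecessor. The one difference in detail is that you form the product $\bar M_1\times_N\bar M_2$, where the paper writes $\bar M_1\times_k\bar M_2$; since the closure of $(j_1,j_2)(M)$ already lies in $\bar M_1\times_N\bar M_2\subset\bar M_1\times_k\bar M_2$ these yield the same scheme $W$, but your version makes it visibly automatic that the two composites $W\to\bar M_i\to N$ agree and that $W\to N$ is projective, which the paper leaves implicit. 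So the construction itself is fine.

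What is missing is the third condition for left filteringness, which you never address: $\sC_F$ must be non-empty. The paper verifies this explicitly as the first step of the proof: since $M$ is quasi-projective over $k$, the morphism $F$ factors through a locally closed immersion $M\to\P^n\times N$; closing up in $\P^n\times N$ and resolving singularities produces a first object $(j:M\hookrightarrow\bar M,\ \bar F:\bar M\to N)$ of $\sC_F$. This is not a formality: Definition~\ref{def:Pushforward} defines $F_*$ \emph{by choosing} an object of $\sC_F$, and invokes this lemma for well-definedness, so if $\sC_F$ were empty there would be nothing to define. Note also that $F$ itself need not be projective, so $(\mathrm{id}_M,F)$ is not in general an object of $\sC_F$; existence really does require the quasi-projectivity of $M$ together with resolution of singularities, exactly as in your predecessor step. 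Adding this one paragraph closes the gap.
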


\begin{proof} This follows easily from resolution of singularities. $\sC_F$ is non-empty: since $M$ is quasi-projective, the map $F$ factors through a locally closed immersion $M\to \P^n\times N$.  We can close up $M$ in $\P^n\times N$ and resolve singularities to construct $\bar{M}$, $j$ and $F$.

Given two objects in $\sC_F$, $\alpha_1:=(j_1:M\to \bar{M}_1,  \bar{F}_1:\bar{M}_1\to N)$, 
$\alpha_2:=(j_2:M\to \bar{M}_2,\bar{F}_2:\bar{M}_2\to N)$, resolve the singularities of the closure of $(j_1,j_2)(M)$ in $M_1\times_k M_2$ to construct $j_3:M\to \bar{M}_3$, $\bar{F}_3:\bar{M}_3\to N$ 
dominating $\alpha_1$ and $\alpha_2$.  Since $M$ is assumed dense in $\bar{M}$, there is at most one morphism between any two objects of $\sC_F$, completing the proof.
\end{proof}

\begin{definition}\label{def:Pushforward}
Let $(M,X)$, $(N,Y)$ be smooth pairs, and let $F:M\to N$ be a morphism such that $F(X)\subset Y$ and the restriction $f:X\to Y$ of $F$ is projective. Define the push-forward morphism
\[
F_*:A_X(M)\to A_Y(N)
\]
by taking $(j:M\to\bar{M}, \bar{F}:\bar{M}\to N)$ in $\sC_F$ and setting
\[
F_*:=\bar{F}_*\circ (j^*)^{-1},
\]
where $j^*:A_{j(X)}(\bar{M})\to A_X(M)$ is the excision isomorphism. 
\end{definition}
We note that $F_*$ is well-defined by lemma~\ref{lem:Filt}.

\begin{lem}\label{lem:Funct} Given composable morphisms $F:M\to N$, $G:N\to P$, and smooth pairs $(M,X)$, $(N,Y)$, $(U,Z)$ suppose that $F(X)\subset Y$, $G(Y)\subset Z$, and that the restrictions of $F$ and $G$, $f:X\to Y$, $g:Y\to Z$, are projective. Then
\[
G_*\circ F_*=(G\circ F)_*:A_X(M)\to A_Z(P).
\]
\end{lem}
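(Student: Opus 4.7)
The plan is to build, from given compactifications $\alpha\in \sC_F$ and $\beta\in\sC_G$, a single object $\gamma\in \sC_{G\circ F}$ that lies birationally over $\bar{M}$ and projectively over $\bar{N}$, and then reduce the claim to the projective base-change statement of Lemma~\ref{lem:PushPull} combined with the functoriality of the integration with supports already established in Theorem~\ref{thm:Pushforward}(4).

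Concretely, I would choose $\alpha=(j\colon M\to \bar{M},\ \bar{F}\colon \bar{M}\to N)\in \sC_F$ and $\beta=(k\colon N\to \bar{N},\ \bar{G}\colon \bar{N}\to P)\in \sC_G$. Since $\bar{F}$ is projective, it factors as a closed immersion $\bar{M}\hookrightarrow \P^n\times N$ followed by the projection; viewing this inside $\P^n\times \bar{N}$, let $\bar{M}'$ be the scheme-theoretic closure. Then $\bar{M}'\to \bar{N}$ is projective and $\bar{M}'\cap(\P^n\times N)=\bar{M}$, so $\bar{M}$ is dense open in $\bar{M}'$. Resolving singularities gives $\pi_0\colon \tilde{M}'\to \bar{M}'$ that is an isomorphism over the smooth locus $\bar{M}$; write $\iota\colon \bar{M}\hookrightarrow \tilde{M}'$, $\pi\colon \tilde{M}'\to \bar{N}$, $\bar{H}:=\bar{G}\circ\pi$ and $j':=\iota\circ j$. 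By construction $\pi\circ\iota=k\circ \bar{F}$ and $\pi^{-1}(N)=\bar{M}$, and one checks that $(j',\bar{H})\in \sC_{G\circ F}$.

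The crucial geometric input is then that the square
\[
\xymatrix{
\bar{M}\ar[r]^{\iota}\ar[d]_{\bar F}&\tilde{M}'\ar[d]^{\pi}\\
N\ar[r]_{k}&\bar{N}
}
\]
is cartesian (by $\pi^{-1}(N)=\bar{M}$) and transverse (since $k$ is \'etale). Applying Lemma~\ref{lem:PushPull} with $f=\pi$, $g=k$ and supports $j'(X)\subset \tilde{M}'$, $k(Y)\subset \bar{N}$ (noting that $j'(X)\subset \iota(\bar{M})$, so the relevant pullback supports become $j(X)$ and $Y$) yields
\[
\bar{F}_*\circ \iota^* = k^*\circ \pi_*\colon A_{j'(X)}(\tilde{M}')\to A_Y(N),
\]
and since $\iota^*$ and $k^*$ are excision isomorphisms, this rearranges to $\pi_*\circ(\iota^*)^{-1}=(k^*)^{-1}\circ \bar{F}_*$. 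The map $\bar{H}=\bar{G}\circ\pi$ is a composition of projective morphisms, so Definition~\ref{def:IntSupp}(1) (via Theorem~\ref{thm:Pushforward}) gives $\bar{H}_*=\bar{G}_*\circ\pi_*$; and $j'=\iota\circ j$ gives $(j'^*)^{-1}=(\iota^*)^{-1}\circ(j^*)^{-1}$. Assembling,
\[
(G\circ F)_*=\bar{H}_*\circ(j'^*)^{-1}=\bar{G}_*\circ\pi_*\circ(\iota^*)^{-1}\circ(j^*)^{-1}=\bar{G}_*\circ(k^*)^{-1}\circ\bar{F}_*\circ(j^*)^{-1}=G_*\circ F_*.
\]

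I expect the main obstacle to be the construction of $\tilde{M}'$: the resolution of singularities must be chosen so as not to alter $\bar{M}$, and one must verify that the fibre product $\tilde{M}'\times_{\bar{N}}N$ is exactly $\bar{M}$, so that the transverse-cartesian hypothesis of Lemma~\ref{lem:PushPull} is met. Once this compatibility is in hand, the remainder is a formal diagram chase through the push-forwards and excision isomorphisms.
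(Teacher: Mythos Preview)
Your argument is correct and follows essentially the same route as the paper: choose compactifications in $\sC_F$ and $\sC_G$, build a common compactification $\tilde M'$ mapping projectively to $\bar N$ and containing $\bar M$ as a dense open, apply Lemma~\ref{lem:PushPull} to the resulting square, and finish with functoriality of the projective push-forward and excision. The paper phrases the construction of $\tilde M'$ more abstractly (as an object of $\sC_{k\circ\bar F}$) and simply asserts the square is transverse cartesian; your explicit closure-and-resolve construction, together with the verification that $\pi^{-1}(N)=\bar M$, is exactly what is needed to justify that assertion, so if anything your write-up is slightly more careful on this point.
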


\begin{proof} Take $j_1:N\to\bar{N}, \bar{G}:\bar{N}\to P$ in $\sC_G$,  $j_2:M\to\bar{M}, \bar{F}:\bar{M}\to N$ in $\sC_F$ and $j_3:\bar{M}\to\bar{M}', \bar{F}':\bar{M}'\to \bar{N}$ in $\sC_{j\circ \bar{F}}$. Then $j_3\circ j_2:M\to\bar{M}',\bar{G}\circ \bar{F}':\bar{M}'\to P$ is in $\sC_{G\circ F}$, so
\begin{align*}
&(G\circ F)_*=\bar{G}_*\circ\bar{F}'_*\circ(j^*_2\circ j^*_3)^{-1}\\
&F_*=\bar{F}_*\circ(j_2^*)^{-1}\\
&G_*=\bar{G}_*\circ (j_1^*)^{-1}.
\end{align*}
Since the diagram
\[
\xymatrix{
\bar{M}\ar[r]^{j_3}\ar[d]_{\bar{F}}&\bar{M}'\ar[d]^{\bar{F}'}\\
N\ar[r]_{j_1}&\bar{N}
}
\]
is transverse cartesian, we have (by lemma~\ref{lem:PushPull})
\[
j_1^*\circ\bar{F}'_*=\bar{F}_*\circ j_3^*.
\]
Thus
\begin{align*}
G_*\circ F_*&=\bar{G}_*\circ (j_1^*)^{-1}\circ \bar{F}_*\circ(j_2^*)^{-1}\\
&=\bar{G}_*\circ \bar{F}'_*\circ( j_3^*)^{-1}\circ (j_2^*)^{-1}\\
&=(G\circ F)_*
\end{align*}
\end{proof}

\begin{prop}\label{prop:Extension} Let $(M,X)$, $(M',X')$ be smooth pairs, $F,G:M\to M'$ two   morphisms such that $F$ and $G$ restrict to the same projective morphism $f:X\to X'$. Then
\[
F_*=G_*:A_X(M)\to A_{X'}(M').
\]
\end{prop}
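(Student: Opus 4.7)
The plan is to factor both $F$ and $G$ through a single closed immersion into a larger ambient space, and then exploit a common section of two projections to reduce the desired equality $F_* = G_*$ to the triviality $(\id)_* = \id$.

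First I factor via graphs. Let $\Gamma_F, \Gamma_G : M \to M \times M'$ be the graph closed immersions and $p : M \times M' \to M'$ the projection, so $F = p \circ \Gamma_F$ and $G = p \circ \Gamma_G$. The restrictions $\Gamma_F|_X$ and $\Gamma_G|_X$ coincide, both equaling the graph $\gamma_f : X \to Y := \{(x, f(x)) : x \in X\}$, which is an isomorphism onto the closed subset $Y \subset M \times M'$, while $p$ restricts on $Y$ to $f$. By lemma~\ref{lem:Funct} this gives $F_* = p_* \circ \Gamma_{F*}$ and $G_* = p_* \circ \Gamma_{G*}$, so it suffices to prove $\Gamma_{F*} = \Gamma_{G*}$ as maps $A_X(M) \to A_Y(M \times M')$. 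Next, I introduce the combined closed immersion $h := (\id_M, F, G) : M \to M \times M' \times M'$, whose image over $X$ is $Y' := \{(x, f(x), f(x)) : x \in X\}$, together with the two projections $\pi_{12}, \pi_{13} : M \times M' \times M' \to M \times M'$ retaining the first-and-second, respectively first-and-third, factors. Then $\Gamma_F = \pi_{12} \circ h$ and $\Gamma_G = \pi_{13} \circ h$, and each $\pi_{ij}$ restricts to an isomorphism $Y' \to Y$; a further application of lemma~\ref{lem:Funct} reduces the problem to showing $\pi_{12*} = \pi_{13*}$ on $A_{Y'}(M \times M' \times M')$.

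For the final step I use the diagonal closed immersion $\delta := \id_M \times \Delta_{M'} : M \times M' \to M \times M' \times M'$. By construction $\pi_{12} \circ \delta = \pi_{13} \circ \delta = \id_{M \times M'}$, and $\delta$ carries $Y$ isomorphically onto $Y'$, so by remark~\ref{rem:GysinIso} the Gysin map $\delta_* : A_Y(M \times M') \to A_{Y'}(M \times M' \times M')$ is an isomorphism. Lemma~\ref{lem:Funct} then yields
\[
\pi_{12*} \circ \delta_* = (\pi_{12} \circ \delta)_* = (\id_{M \times M'})_* = \id,
\]
and likewise $\pi_{13*} \circ \delta_* = \id$, so both $\pi_{12*}$ and $\pi_{13*}$ equal $\delta_*^{-1}$, completing the argument.

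The principal technical hurdle is to verify at each stage that the morphisms appearing satisfy the hypotheses of definition~\ref{def:Pushforward} so that lemma~\ref{lem:Funct} applies: the closed immersions $\Gamma_F, \Gamma_G, h, \delta$ are themselves projective and admit trivial envelopes, whereas for the projections $p, \pi_{12}, \pi_{13}$ an envelope is built by compactifying the relevant $M$- or $M'$-factor inside a smooth projective compactification supplied by resolution of singularities. In each case the restriction to the designated support is a visibly projective morphism (the identity, $f$, or an isomorphism between graphs), which gives the required well-definedness.
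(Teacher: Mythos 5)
Your proof is correct, and it takes a genuinely different and substantially shorter route than the paper's. The paper reduces to $M'=\A^n$ via Jouanolou's trick and a closed immersion into affine space, then builds an explicit $\A^1$-family $\phi(t,x,y)=(t,x,ty+(1-t)x)$ interpolating between $F$ and $G$, compactifies this family inside $\A^1\times\P^N\times\P^n\times\A^n$, resolves its singularities, and finally invokes the homotopy axiom for $A$ and lemma~\ref{lem:PushPull} to compare the two fibers. Your argument instead factors both $F$ and $G$ through the single closed immersion $h=(\id_M,F,G)$ into $M\times M'\times M'$, observes that the common support $Y'=h(X)$ lies on the image of the partial diagonal $\delta=\id_M\times\Delta_{M'}$, and then uses $\pi_{12}\circ\delta=\pi_{13}\circ\delta=\id$ together with the Gysin isomorphism of $\delta$ (remark~\ref{rem:GysinIso}) to conclude $\pi_{12*}=\delta_*^{-1}=\pi_{13*}$. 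The only ingredients are lemma~\ref{lem:Funct}, remark~\ref{rem:GysinIso}, and definition~\ref{def:IntSupp}(5) for the identity push-forward; resolution of singularities enters only through the well-definedness of $F_*$ in definition~\ref{def:Pushforward} (via lemma~\ref{lem:Filt}), not through any explicit deformation or compactification inside the proof. One minor remark: to conclude $\Gamma_{F*}=\Gamma_{G*}$ from $\pi_{12*}=\pi_{13*}$ you only need to post-compose with $h_*$, so no invertibility of $h_*$ is required (though of course $h_*$ is an isomorphism by the same remark~\ref{rem:GysinIso}); and the step $\pi_{12*}\circ\delta_*=\pi_{13*}\circ\delta_*$ already gives what you need after cancelling the isomorphism $\delta_*$. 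The trade-off is that the paper's deformation argument is more geometric in flavor and is the kind of technique that recurs elsewhere in the theory (the proof of theorem~\ref{thm:Int} itself uses the deformation to the normal bundle), whereas your argument is a purely formal consequence of functoriality and the Gysin isomorphism and would adapt verbatim to any setting in which those two facts hold.
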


\begin{proof}   We first reduce to the case of affine $M'$. Indeed, Jouanolou tells us that there is an affine space bundle $q:E\to M'$ with $E$ affine. Since $E\to M'$ is smooth, replacing $X'$ with $q^{-1}(X')$,  $M$ with $M\times_{M'}E$ and $X$ with $X\times_{M'}E$, and using the extended homotopy property (remark~\ref{rems:Cohomology}(2)) achieves the reduction.

Next, we reduce to the case in which $M'=\A^n$ for some $n$. Since $M'$ is affine, there  a closed immersion $i:M'\to \A^n$. By remark~\ref{rem:GysinIso}, the push-forward $i_*:A_{X'}(M')\to A_{i(X')}(\A^n)$ is an isomorphism, so we may replace $(M',X')$ with $(\A^n,i(X'))$, and change notation.

Consider the product map
\[
(F,G):M\to \A^n\times_k \A^n.
\]
Since $F$ and $G$ are both equal to $f$ when restricted to $X$, we have the commutative diagram
\[
\xymatrix{
X\ar@{^(->}[dd]\ar[r]^f&X'\ar@{^(->}[d]\\
&\A^n\ar[d]^\delta\\
M\ar[r]_-{(F,G)}&\A^n\times \A^n
}
\]
where $\delta$ is the diagonal. 

Consider the map
\begin{align*}
\phi:\A^1\times \A^n\times \A^n&\to \A^n\times \A^n\\
\phi(t,x,y)&:=(t,x, ty+(1-t)x).
\end{align*}
For $a\in k$,  let $\phi_a:\A^n\times \A^n\to \A^n\times \A^n$ be the fiber of $\phi$ over $a$.  Note that
\[
\phi\circ(\id\times \delta)=\id\times\delta:\A^1\times\A^n\to \A^1\times\A^n\times\A^n.
\]
Thus, we may form the commutative diagram of schemes over $\A^1$
\[
\xymatrixcolsep{50pt}
\xymatrix{
\A^1\times X\ar@{^(->}[dd]\ar[r]^{\id\times f}&\A^1\times X'\ar@{^(->}[d]\\
&\A^1\times \A^n\ar[d]^{\id\times \delta}\\
\A^1\times M\ar[r]_-{\phi\circ[\id\times(F,G)]}&\A^1\times \A^n\times \A^n
}
\]

Let $j:\A^n\to\P^n$ be the standard open immersion.  Since  $M$ is quasi-projective, there is an open immersion $g:U\hookrightarrow \P^N$ for some $N$, and a closed immersion $i:M\to U$. Thus, we may
 factor $\phi\circ[\id\times(F,G)]$ as a composition of maps over $\A^1$
 \[
 \A^1\times M\xrightarrow{\iota}\A^1\times U\times \A^n\times \A^n\xrightarrow{\gamma}
 \A^1\times\P^N\times\A^n\times\A^n\xrightarrow{q} \A^1\times\A^n\times\A^n,
 \]
 with $\iota$ a closed immersion, $\gamma=\id\times g\times\id$ and $q$ the projection. Let $\sM^*$ be the closure of $\gamma\circ\iota(\A^1\times M)$ in $\A^1\times\P^N\times\P^n\times\A^n$ and let $\mu:\sM\to \sM^*$ be a resolution of singularities of $\sM^*$. We note that $\sM^*$, and hence $\sM$,   is naturally a scheme over $\A^1$, and similarly, the map  $\phi\circ[\id\times(F,G)]$ extends to a map
 \[
 \pi:\sM\to \A^1\times\P^n\times\A^n
 \]
 over $\A^1$.  Since $\gamma\circ\iota(\A^1\times M)$ is a smooth dense open subscheme of $\sM^*$, we may take $\sM$ so that $\mu:\sM\to \sM^*$ is an isomorphism over $\gamma\circ\iota(\A^1\times M)$; let $h:\A^1\times M\to \sM$ be the resulting open dense immersion.

 This gives us the commutative diagram of schemes over $\A^1$
 \[
\xymatrixcolsep{50pt}
\xymatrix{
\A^1\times X\ar@{^(->}[dd]\ar[r]^{\id\times f}&\A^1\times X'\ar@{^(->}[d]\\
&\A^1\times \A^n\ar[d]^{\id\times \delta}\\
\A^1\times M\ar@{^(->}[d]_h\ar[r]_-{\phi\circ[\id\times(F,G)]}&\A^1\times \A^n\times \A^n\ar[d]^\gamma\\
\sM\ar[r]_-\pi\ar[d]_p&\A^1\times \P^n\times \A^n\ar[d]^{p_1}\\
\A^1\ar@{=}[r]&\A^1}
\]
 
  For $a\in k$, let $\pi_a:\sM_a\to \P^n\times\A^n$ be the fiber of $\pi$ over $a$.   We note that $\sM_a$ contains $M$ as an open subscheme, and that $\pi_a$ extends $\phi_a\circ[\id\times(F,G)]$. We let $\bar{M}_a\subset \sM_a$ be the closure of $M$ in $\sM_a$. Blowing up $\sM$ further  and changing 
 notation if necessary, we may assume that $\bar{M}_0$ and $\bar{M}_1$ are smooth.  Let   $\iota_0:\bar{M}_0\to\sM$, $\iota_1:\bar{M}_1\to\sM$ denote the inclusions.

Noting that $\delta(\A^n)$ is closed in $\P^n\times\A^n$, we see that $\A^1\times \delta(X')$ is closed in $\A^1\times\P^n\times\A^n$. Since $\id\times f:\A^1\times X\to \A^1\times X'$ is projective, this implies that 
$h(\A^1\times X)$ is closed in $\sM$, and $h(0\times X)$, $(1\times M)$ are thus closed and contained in $\bar{M}_0\subset p^{-1}(0)$, $\bar{M}_1\subset p^{-1}(1)$, respectively.   We have the commutative diagram
\[
\xymatrix{
A_{h(0\times X)}(\bar{M}_0)\ar[d]_{h_0^*}&A_{h(\A^1\times X)}(\sM)\ar[l]_-{\iota_0^*}\ar[r]^-{\iota_1^*}\ar[d]^{h^*}&A_{h_1(1\times X)}(\bar{M}_1)\ar[d]^{h_1^*}\\
A_X(M)&A_{\A^1\times X}(\A^1\times M)\ar[l]^-{i_0^*}\ar[r]_-{i_1^*}&A_X(M)
}
\]
where $i_0, i_1:M\to\A^1\times M$ are the 0-, 1-sections, and $h_0$, $h_1$ are the restrictions of $h$. 

By the homotopy property for $A$, the maps $i_0^*, i_1^*$ are isomorphisms and $i_0^*=i_1^*$. The maps $h$, $h_0^*$ and $h_1^*$ are isomorphisms by excision.  

Since $h(\A^1\times M)\subset \sM$ is an open neighborhood of $h(\A^1\times X)$ in $\sM$ that is smooth over $\A^1$, we may apply lemma~\ref{lem:PushPull} to give the commutative diagram
\[
\xymatrix{
A_X(M)&A_{\A^1\times X}(\A^1\times M)\ar[l]_-{i_0^*}\ar[r]^-{i_1^*}&A_X(M)\\
A_{h(0\times X)}(\bar{M}_0)\ar[u]^{h_0^*}\ar[d]_{\pi_{0*}}&A_{h(\A^1\times X)}(\sM)\ar[u]_{h^*}
\ar[l]_-{\iota_0^*}\ar[r]^-{\iota_1^*}\ar[d]^{\pi_*}&A_{h_1(1\times X)}(\bar{M}_1)\ar[d]^{\pi_{1*}}\ar[u]_{h_1^*}\\
A_{\delta(X')}(\P^n\times\A^n)&A_{\A^1\times \delta(X')}(\A^1\times\P^n\times\A^n)
\ar[l]^-{i_0^*}\ar[r]_-{i_1^*}&A_{\delta(X')}(\P^n\times\A^n)
}
\]
Since $i_1^*\circ(i_0^*)^{-1}=\id$ (for both the top row and the bottom row), this gives
\[
\pi_{0*}\circ (h^*_0)^{-1}=\pi_{1*}\circ(h_1^*)^{-1}:A_X(M)\to A_{\delta(X')}(\P^n\times\A^n).
\]
Composing with the push-forward for the projection $p_2:\P^n\times\A^n\to \A^n$, we have
\begin{equation}\label{eqn:1}
p_{2*}\circ \pi_{0*}\circ (h^*_0)^{-1}=p_{2*}\circ\pi_{1*}\circ(h_1^*)^{-1}:A_X(M)\to A_{X'}(\A^n).
\end{equation}

Since 
\[
p_2\circ \phi\circ[\id\times(F,G)]\circ i_0=F,\quad p_2\circ \phi\circ[\id\times(F,G)]\circ i_1=G,
\]
we have commutative diagrams
\[
\xymatrix{
M\ar[d]_{h_0}\ar[r]^F&\A^n \\
\bar{M}_0\ar[ur]_{p_2\circ\pi_0}}\quad
\xymatrix{
M\ar[d]_{h_1}\ar[r]^G&\A^n\\
\bar{M}_1\ar[ur]_{p_2\circ\pi_1}}
\]
Thus, $(h_0:M\to\bar{M}_0, p_2\circ\pi_0)$ is in $\sC_F$ and 
$(h_1:M\to\bar{M}_0, p_2\circ\pi_1)$ is in $\sC_G$, hence
\begin{align*}
F_*&=(p_2\circ\pi_0)_*\circ(h_0^*)^{-1 }\\
G_*&=(p_2\circ\pi_1)_*\circ \circ(h_0^*)^{-1 }.
\end{align*}
Together with \eqref{eqn:1}, this gives $F_*=G_*$.
\end{proof}

\begin{lem}\label{lem:IdPush} Let $F:M\to N$ be a   morphism in $\Sm/k$, $(M,X)$, $(N,Y)$ smooth pairs. Suppose that $F(X)=Y$ and that the restriction of $F$ to $f:X\to Y$ is an isomorphism (using the reduced scheme structures). Then $F_*:A_X(M)\to A_Y(N)$ is an isomorphism.
\end{lem}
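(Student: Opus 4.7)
The plan is to reduce to the case of the projection $p = p^n \colon \P^n \times N \to N$ with a closed subset $W \subset \P^n \times N$ such that $p|_W \colon W \to Y$ is an isomorphism on reduced schemes, and then to exhibit an explicit inverse to $p_*$ via a section. First, by the definition of $F_*$, replacing $(M, X)$ by $(\bar M, j(X))$ for an object $(j, \bar F) \in \sC_F$, we may assume $F$ is itself projective. Factoring $F = p \circ \iota$ with $\iota \colon M \to \P^n \times N$ a closed immersion and $p \colon \P^n \times N \to N$ the projection, functoriality (lemma~\ref{lem:Funct}) gives $F_* = p_* \circ \iota_*$, and remark~\ref{rem:GysinIso} shows $\iota_* \colon A_X(M) \to A_{\iota(X)}(\P^n \times N)$ is an isomorphism. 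Setting $W := \iota(X)$, one reduces to showing $p_* \colon A_W(\P^n \times N) \to A_Y(N)$ is an isomorphism under the hypothesis that $p|_W$ is an isomorphism of reduced closed subschemes.

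Next, using Jouanolou's trick, choose an affine space bundle $q \colon E \to N$ with $E$ smooth affine. The pullbacks $q^* \colon A_Y(N) \to A_{q^{-1}(Y)}(E)$ and $(\id_{\P^n} \times q)^* \colon A_W(\P^n \times N) \to A_{\tilde W}(\P^n \times E)$ are isomorphisms by remark~\ref{rems:Cohomology}(2), and they intertwine with $p_*$ by the base-change lemma~\ref{lem:PushPull} applied to the evident transverse cartesian square. Hence we may assume $N$ is affine; since $A_Y(N)$ and $A_W(\P^n \times N)$ depend only on the underlying reduced closed subsets, we may also assume $Y$ and $W$ are reduced. Let $\sigma \colon Y \to W$ be the inverse of $p|_W$ and set $\tau := p_1 \circ \sigma \colon Y \to \P^n$, where $p_1$ is the first projection $\P^n \times N \to \P^n$.

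With $N$ affine, one extends $\tau$ to a morphism $\tilde\tau \colon U \to \P^n$ on some Zariski-open neighborhood $U \subset N$ of $Y$ by a standard argument: cover $Y$ by the preimages $Y_i := \tau^{-1}(\A^n_i)$ of the standard affine charts of $\P^n$, extend the finitely many coordinate functions from the closed subscheme $Y \subset N$ to principal affine opens of $N$, and remove the common vanishing locus of the resulting extensions. Setting $\tilde s := (\tilde\tau, \id_U) \colon U \to \P^n \times U$ yields a section of $p|_U$ satisfying $\tilde s(Y) = W$ as reduced closed subschemes of $\P^n \times U$. Remark~\ref{rem:GysinIso} applied to the closed immersion $\tilde s$ shows that $\tilde s_* \colon A_Y(U) \to A_W(\P^n \times U)$ is an isomorphism, while proposition~\ref{prop:Projection}(5) gives $p_* \circ \tilde s_* = \id$. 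Therefore $p_*$ is the inverse of $\tilde s_*$, and transporting back via excision together with lemma~\ref{lem:PushPull} gives the required isomorphism $p_* \colon A_W(\P^n \times N) \to A_Y(N)$, whence $F_*$ is an isomorphism. The main subtlety is the extension of $\tau$ across a neighborhood of $Y$, which is where the reduction to affine $N$ via Jouanolou is essential; once this is in hand, the conclusion is forced by combining the section identity of proposition~\ref{prop:Projection}(5) with the Gysin isomorphism of remark~\ref{rem:GysinIso}.
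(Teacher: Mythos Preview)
Your overall strategy matches the paper's proof closely: reduce to projective $F$, factor as closed immersion followed by projection, dispose of the immersion via remark~\ref{rem:GysinIso}, and handle the projection by producing a section through $W$ and invoking proposition~\ref{prop:Projection}(5). The paper does exactly this.

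The gap is in your extension step. You claim that, once $N$ is affine, the morphism $\tau\colon Y\to\P^n$ extends to a single open neighborhood $U\supset Y$ ``by a standard argument.'' But the sketch you give only produces local extensions on affine opens where $\tau$ lands in a standard chart $\A^n\subset\P^n$; there is no reason these local extensions glue. Concretely, an extension $\tilde\tau\colon U\to\P^n$ forces the line bundle $\tau^*\sO(1)$ on $Y$ to lie in the image of $\Pic(U)\to\Pic(Y)$, and nothing in your setup guarantees this. The Jouanolou reduction does not help here: replacing $N$ by an affine-space bundle $E\to N$ replaces $Y$ by $q^{-1}(Y)$ and pulls back the line bundle, but does not trivialize it. Your phrase ``remove the common vanishing locus of the resulting extensions'' would make sense if $\tau$ were given by $n+1$ global functions on $Y$, i.e.\ if $\tau^*\sO(1)$ were trivial---but that is exactly what is at issue.

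The paper sidesteps this by never asking for a single extension. Instead it produces, for each $y\in Y$, an affine open $U\ni y$ in $N$ on which the section extends (exactly your local argument), and then concludes via Mayer--Vietoris for the cohomology theory $A$ that $p_*$ is an isomorphism on each piece of the cover, hence globally. If you replace your single-$U$ claim by this Mayer--Vietoris patching, your proof goes through; alternatively, you could drop the Jouanolou step entirely, since it is the local affineness (not global affineness of $N$) that makes the section extend.
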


\begin{proof} Extending $F$ to $\bar{F}:\bar{M}\to N$ for some $(M\hookrightarrow\bar{M}, \bar{F})$ in $\sC_F$, and changing notation, we may assume that $F$ is projective. Factoring $F$ as $F=p\circ i$, with $p:\P^n\times N\to N$ the projection and $i:M\to \P^n\times M$ a closed immersion, it suffices to handle the two cases $F=i$ and $F=p$.

For $F=i$, this is remark~\ref{rem:GysinIso}. In the case of a projection, let $s:Y\to \P^n\times N$ be the section induced by the isomorphism $p:X\to Y$. Suppose we have  an extension of $s$ to a section $t:N\to \P^n\times N$. Letting $M:=t(N)$, with closed immersion $i:M\to \P^n\times N$, we have the commutative diagram
\[
\xymatrix{
A_X(M)\ar[r]^{i_*}\ar[rd]_{(p\circ i)_*}&A_X(\P^n\times N)\ar[d]^{p_*}\\
&A_Y(N).
}
\]
As $p\circ i:(M,X)\to (N,Y)$ is an isomorphism of smooth pairs, the map $(p\circ i)_*:A_X(M)\to
A_Y(N)$ is an isomorphism. From the case of a closed immersion, $i_*:A_X(M)\to A_X(\P^n\times N)$ is also an isomorphism, hence $p_*:A_X(\P^n\times N)\to A_Y(N)$ is an isomorphism as well.

We claim that $N$ admits a Zariski open cover
\[
N=\cup_{i=1}^sU_i
\]
such that the restriction of $s$ to $U_i\cap Y$ extends to a section $t_i:U_i\to \P^n\times U_i$. Using Mayer-Vietoris and the case in which a section extends, handled above, this will prove the result in general. To prove our claim, let $y$ be a point of $Y$. Shrinking $N$ to some affine neighborhood $U$ of $y$, we may assume that $s(Y)$ is contained in a product $\A^n\times N$, where $\A^n$ is some standard affine subset of $\P^n$. The map $s$ is then given by a morphism $\bar{s}:Y\to \A^n$, i.e., by $n$ regular functions $\bar{s}_1,\ldots, \bar{s}_n$ on $Y$. As $U$ is affine, each $\bar{s}_i$ lifts to a regular function $\bar{t}_i$ on $U$, giving the desired section $t:U\to \A^n\times U\subset \P^n\times U$ extending $s$.
\end{proof}

We can also extend the compatibility of push-forward with the boundary in the long exact sequence of a pair (definition~\ref{def:IntSupp}(5))

\begin{lem}\label{lem:BoundaryExtension} Let $F:M\to N$ be a   morphism in $\Sm/k$, let $X\subset X'\subset M$, $Y\subset Y'\subset N$ be closed subsets. Suppose that  $F(X')\subset Y'$, that the restriction of $F$ to $f:X'\to Y'$ is projective and that $f^{-1}(Y)\cap X'=X$. Then the diagram
\[
\xymatrixcolsep{40pt}
\xymatrix{
A_{X'\setminus X}(M\setminus X)\ar[r]^-{\del_{M,X',X}}\ar[d]_{F_*}&A_X(M)\ar[d]^{F_*}\\
A_{Y'\setminus Y}(N\setminus Y)\ar[r]_-{\del_{N,Y',Y}}&A_Y(N)}
\]
commutes. 
\end{lem}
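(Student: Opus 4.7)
The plan is to reduce to the projective case, Definition~\ref{def:IntSupp}(6), by choosing a single compactification of $F$ that serves both rows of the diagram simultaneously. Using Lemma~\ref{lem:Filt}, I pick $(j: M \hookrightarrow \bar M, \bar F: \bar M \to N)$ with $\bar F$ projective and $j$ a dense open immersion. Since $f = F|_{X'}: X' \to Y'$ is projective, the remark preceding Lemma~\ref{lem:Filt} shows that $\bar X' := j(X')$ is closed in $\bar M$. Setting $\bar X := \bar F^{-1}(Y) \cap \bar X'$, the inclusion $\bar X \subset \bar X' = j(X') \subset j(M)$ yields $\bar X \cap j(M) = j(F^{-1}(Y) \cap X') = j(X)$, so in fact $\bar X = j(X)$ is likewise closed in $\bar M$. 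Thus $(j, \bar F)$ lies simultaneously in $\sC_{F:(M,X')\to(N,Y')}$ and in $\sC_{F:(M,X)\to(N,Y)}$. Moreover $\bar F(\bar X') = F(X') \subset Y'$ and $\bar F^{-1}(Y) \cap \bar X' = \bar X$ by construction, so the hypotheses of Definition~\ref{def:IntSupp}(6) are met for $\bar F: \bar M \to N$ with closed subsets $\bar X \subset \bar X' \subset \bar M$ and $Y \subset Y' \subset N$.

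Applying Definition~\ref{def:IntSupp}(6) then yields commutativity of the ``upstairs'' diagram, with $\bar F$ in place of $F$ and $\bar X$, $\bar X'$ in place of $X$, $X'$. I will transfer this commutativity to the desired diagram via excision pull-backs along $j$: the map $j^*: A_{\bar X}(\bar M) \to A_X(M)$ is an excision isomorphism (since $\bar X = j(X)$ is closed in $\bar M$ and contained in $j(M)$), and by Definition~\ref{def:Pushforward} the right column $F_*: A_X(M) \to A_Y(N)$ of the desired diagram equals $\bar F_* \circ (j^*)^{-1}$. The restriction $j|_{M\setminus X}: M\setminus X \to \bar M\setminus \bar X$ is likewise an open immersion; since $\bar X'\setminus \bar X = j(X'\setminus X) \subset j(M\setminus X)$ while the removed part $\bar M\setminus j(M)$ is disjoint from $\bar X'\setminus \bar X$, it provides an excision isomorphism $j^*: A_{\bar X'\setminus \bar X}(\bar M\setminus \bar X) \to A_{X'\setminus X}(M\setminus X)$. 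The top rows of the two diagrams then correspond by naturality of $\del$ with respect to pull-back (the localization axiom).

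For the left column I have to identify the left $F_*$ of the desired diagram with $\bar F_* \circ (j^*_{\mathrm{res}})^{-1}$. Since $F$ need not restrict to a morphism $M\setminus X \to N\setminus Y$, the left $F_*$ is computed by first passing via excision to $A_{X'\setminus X}(M\setminus F^{-1}(Y))$ (where $F$ does restrict to a morphism into $N\setminus Y$, the restriction of supports $X'\setminus X \to Y'\setminus Y$ being projective as a base change of $f$) and then applying Definition~\ref{def:Pushforward} with the compactification $(j|_{M\setminus F^{-1}(Y)}, \bar F|_{\bar M\setminus \bar F^{-1}(Y)})$. On the upstairs side, the $\bar F_*$ appearing on the left of the Definition~\ref{def:IntSupp}(6) square is, by that definition's own convention, the composition of the excision $A_{\bar X'\setminus \bar X}(\bar M\setminus \bar X) \to A_{\bar X'\setminus \bar X}(\bar M\setminus \bar F^{-1}(Y))$ with push-forward by $\bar F|_{\bar M\setminus \bar F^{-1}(Y)}$. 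These two factorizations assemble into a commutative diagram because the square of open immersions $M\setminus F^{-1}(Y) \hookrightarrow M\setminus X$ sitting over $\bar M\setminus \bar F^{-1}(Y) \hookrightarrow \bar M\setminus \bar X$ via $j$ is functorial under pull-back.

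The main obstacle I anticipate is precisely this bookkeeping in the left column: one must juggle three ambient open subsets of $\bar M$ (and their preimages in $M$) and verify that both the downstairs and upstairs push-forwards factor through the common intermediate $\bar M\setminus \bar F^{-1}(Y)$ in compatible ways. Once that identification is in place, the lemma reduces to the already-established projective case, Definition~\ref{def:IntSupp}(6), together with naturality of $\del$ and functoriality of pull-back.
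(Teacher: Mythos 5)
Your proof is correct and takes essentially the same route as the paper's, which simply says to pick $(j,\bar F)\in\sC_F$ and apply Definition~\ref{def:IntSupp}(6) (the paper's proof cites item~(5) by an evident typo). What you have done is spell out the two excision/pull-back reductions the paper leaves implicit: (i) that $\bar X:=\bar F^{-1}(Y)\cap\bar X'$ equals $j(X)$ and is therefore closed, so a single compactification serves both triples simultaneously; and (ii) the bookkeeping in the left column, where the $F_*$ of the lemma factors through $A_{X'\setminus X}(M\setminus F^{-1}(Y))$ and must be matched against the analogous factorization of $\bar F_*$ through $A_{\bar X'\setminus\bar X}(\bar M\setminus\bar F^{-1}(Y))$, which you correctly reduce to functoriality of pull-back along the commuting square of open immersions. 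This is the same argument, just with the details made explicit.
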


\begin{proof} Just take $M\to \bar{M}, \bar{F}:\bar{M}\to N$ in $\sC_f$ and apply definition~\ref{def:IntSupp}(5).
\end{proof}

We give a modified version of Mocanasu's notion of an  algebraic oriented theory on $\SP$. In what follows, for $(M,X)$ a smooth pair, we consider $X$ as a scheme by given it the reduced structure.

\begin{definition}\label{def:AlgOrient} An  {\em algebraic oriented theory} on $\SP$ consists of the following data:
\begin{enumerate}
\item[(D1)]  A functor  $A:\SP^\op\to \Ab$. For a morphism $G:(M,X)\to (N,Y)$ in $\SP$,  we write $G^*:A_Y(N)\to A_X(M)$ for $A(G)$.\\
\item[(D2)] Let $(M,X)$, $(N,Y)$ be smooth pairs, let $F:M\to N$ be a morphism such that $F(X)\subset Y$ and such that the restriction of $F$ to $f:X\to Y$ is projective. Then there is a ``push-forward" $F_*:A_X(M)\to A_Y(N)$.
\end{enumerate}
These satisfy:
\begin{enumerate}
\item[(A1)] $A$ is additive: For $X, Y\in\Sm/k$, the canonical map $A(X\amalg Y)\to A(X)\times A(Y)$ is an isomorphism.\\
\item[(A2)]  i. Let $(M,X)$, $(N,Y)$ be smooth pairs, let $F, G:M\to N$ be morphisms in $\Sm/k$  such that $F(X)\subset Y$, $G(X)\subset Y$ and such that $F$ and $G$ restrict to the same projective morphism $f:X\to Y$. Then $F_*=G_*:A_X(M)\to A_Y(N)$.\\
\\
ii. Let $(M,X)$, $(N,Y)$ be smooth pairs, let $F:M\to N$ be a morphism such that $F(X)\subset Y$ and such that the restriction of $F$ to $f:X\to Y$ is an isomorphism. Then $F_*:A_X(M)\to A_Y(N)$ is an isomorphism.\\
\item[(A3)] Given smooth pairs $(M_1, X_1)$, $(M_2,X_2)$ and $(M_3, X_3)$ and a commutative diagram
\[
\xymatrix{
X_1\ar@{^(->}[d]\ar[r]^{f_1}&X_2\ar@{^(->}[d]\ar[r]^{f_2}&X_3\ar@{^(->}[d]\\
M_1\ar[r]_{F_1}&M_2\ar[r]_{F_2}&M_3
}
\]
such that $f_1$ and $f_2$ are projective, then $F_{2*}\circ F_{1*}=(F_2\circ F_1)_*$.\\
\item[(A4)] Suppose we have  smooth pairs  $(M, X)$, $(M',X')$, $(N, Y)$  and $(N', Y')$, and a commutative diagram
\[
\xymatrix{
&X'\ar[rr]^{f'}\ar[dl]_{g'}\ar@{^(->}[dd]|\hole&&Y'\ar[dl]^g\ar@{^(->}[dd]\\
X\ar[rr]^(.6){f}\ar@{^(->}[dd]&&Y\ar@{^(->}[dd]\\
&M'\ar'[r]_(.7){F'}[rr]\ar[dl]_{G'}&&N'\ar[dl]^G\\
M\ar[rr]_F&&N
}
\]
such that the top, bottom, left and right squares are cartesian, and that the bottom square is  transverse. Suppose further that $f$ and $f'$ are projective. Finally, suppose that $G$ and $G'$ are either smooth and equi-dimensional, or    closed immersions. Then
\[
F'_*\circ G^{\prime*}=G^*\circ F'_*:A_X(M)\to A_{Y'}(N').
\]
\item[(A5)] Let $(M,X)$, $(N,X)$, $(M',X)$ and $(N', X')$ be smooth pairs. Suppose we have a cartesian diagram
\[
\xymatrix{
M\ar[r]^{G'}\ar[d]_{F'}&M'\ar[d]^F\\
N\ar[r]_G&N'
}
\]
with $X=G^{-1}(X')$, $X=F^{-1}(X')$, $G'(X)=X$, $F'(X)=X$ and such that the restrictions $G':X\to X$, $F':X\to X$ are the identity. Suppose  that $F$ and $G$ are open immersions. We have the diagram
\[
\xymatrix{
A_X(M)\ar[r]^{G'_*}\ar[d]_{F'_*}&A_X(M')\\
A_X(N)&A_{X'}(N');\ar[l]^{G^*}\ar[u]_{F^*}
}
\]
note that $G'_*$ and $F'_*$ are isomorphisms by (A2)(ii). Then 
\[
(G'_*)^{-1}\circ F^*=(F'_*)^{-1}\circ G^*.
\]
\item[(A6)] Let 
\[
\xymatrix{
Z\ar@{^(->}[r]\ar[d]&V\ar[d]^p\\
Z'\ar@{^(->}[r]&X
}
\]
be a cartesian diagram, where the horizontal arrows are inclusions of  reduced closed subschemes, and $p:V\to X$ is an affine space bundle. Then $p^*:A_{Z'}(X)\to A_Z(V)$ is an isomorphism.\\
\item[(A7)] Let $(X,Z)$ be a smooth pair. Then $\id_{X*}:A_Z(X)\to A_Z(X)$ is the identity map.\\
\item[(A8)] Let $X\subset Y\subset M$ be closed subsets of $M\in \Sm/k$. Then 
\[
\id_{M*}=\id_M^*:A_X(M)\to A_Y(M).
\]
\end{enumerate}
\end{definition}

\begin{rems} Other than notational or organizational changes, our axioms for an oriented algebraic theory differ from Mocanasu's notion \cite[Definition 1.15]{Mocanasu} of an oriented algebraic theory at the following  points:\\
\\
1. Mocanasu's axiom (A4) differs from ours in that she does not assume that the bottom square is cartesian, and does not require the bottom square to be transverse if $G$ and $G'$ are closed immersions. However, in all uses of (A4) in \cite{Mocanasu}, the bottom square {\em is} transverse cartesian, so this does not lead to any difference in the applications.\\
\\
2. Mocanasu's axiom (A5) differs from ours in that she allows the morphisms $F$ and $G$ to be smooth and equi-dimensional, rather than requiring them to be open immersions. This causes a difference in the associated Borel-Moore homology theories, in that our Borel-Moore homology theories will only have functorial pull-back morphisms for open immersions, whereas the Borel-Moore homology theories of Mocanasu have functorial pull-back morphisms for smooth equi-dimensional morphisms that are ``embeddable". \\
\\
3. We have strengthened the homotopy axiom (A6) from that of \cite{Mocanasu}, by allowing $V$ to be an affine space bundle rather than a vector bundle.\\
\\
4. We have added the axiom (A8), which appears as an additional condition on an algebraic oriented theory in the statement of   \cite[Proposition 4.2]{Mocanasu}. 
\end{rems}

\begin{thm} \label{thm:AlgOrient} Suppose  that $k$ admits resolution of singularities.
 Let $A$ be an oriented $\Z/2$-graded ring cohomology theory  on $\SP$. Then the functor $A:\SP^\op\to \Ab$ (forget the $\Z/2$-grading) and the push-forward maps of definition~\ref{def:Pushforward} define an algebraic oriented theory on $\SP$.
\end{thm}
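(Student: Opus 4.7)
The plan is to verify the eight axioms (A1)--(A8) of Definition~\ref{def:AlgOrient} individually, reducing each in turn either to a statement already proven in Section~\ref{sec:Integration} on cohomology with supports, or to the extension procedure of Definition~\ref{def:Pushforward}. With Proposition~\ref{prop:Extension}, Lemmas~\ref{lem:Funct},~\ref{lem:IdPush}, and~\ref{lem:BoundaryExtension}, together with the full strength of Theorem~\ref{thm:Pushforward}, the substantive content is already in place; what remains is a matching of notation and a careful handling of the compactifications entering Definition~\ref{def:Pushforward}.

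Axiom (A1) is immediate from the fact that for $X \amalg Y \in \Sm/k$ the Mayer-Vietoris sequence (applied to the disjoint clopen pieces) yields the splitting $A(X \amalg Y) \iso A(X) \times A(Y)$. Axioms (A2)(i) and (A2)(ii) are Proposition~\ref{prop:Extension} and Lemma~\ref{lem:IdPush}, respectively, and (A3) is Lemma~\ref{lem:Funct}. Axiom (A6) is the extended homotopy property of Remark~\ref{rems:Cohomology}(2). Axiom (A7) is immediate: the pair $(\id_X, \id_X)$ belongs to $\sC_{\id_X}$, whence $\id_{X*}$ reduces to $\id_X^*$ on $A_Z(X)$ via Definition~\ref{def:IntSupp}(5), which is part of Theorem~\ref{thm:Pushforward}(4). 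Axiom (A8) is precisely Definition~\ref{def:IntSupp}(5).

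The main case requiring genuine work is axiom (A4), the transverse base-change formula $F'_* \circ G^{\prime *} = G^* \circ F_*$. Given $F: M \to N$ whose restriction $f: X \to Y$ is projective, choose an object $(j: M \hookrightarrow \bar{M}, \bar{F}: \bar{M} \to N)$ of $\sC_F$. Form the fibre product $\bar{M}'' := \bar{M} \times_N N'$; when $G$ is smooth equi-dimensional this is already in $\Sm/k$ and fits into a transverse cartesian square, while when $G$ is a closed immersion one resolves singularities of $\bar{M}''$ outside the smooth open $M' \hookrightarrow \bar{M}''$ to replace $\bar{M}''$ by a smooth model $\bar{M}'$ containing $M'$ as a dense open. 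In either situation one obtains an object of $\sC_{F'}$ together with a transverse cartesian square to which Lemma~\ref{lem:PushPull} applies, and the asserted identity follows upon unwinding the excision isomorphisms built into Definition~\ref{def:Pushforward}.

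Axiom (A5) is handled by observing that since $F, G, F', G'$ are open immersions and $F', G'$ restrict to the identity on the common support $X$, both compositions in question realise, up to excision, the canonical pull-back $H^* : A_{X'}(N') \to A_X(M)$ associated to the open immersion $H = F \circ G' = G \circ F' : M \to N'$; this reduces (A5) to a compatibility between excision isomorphisms and the two chains $M \subset M' \subset N'$ and $M \subset N \subset N'$ of open neighborhoods of $X$, which is functorial. The principal obstacle in the entire verification is therefore (A4), specifically the step of ensuring smoothness of the compactified pull-back in the case where $G$ is a closed immersion; this is exactly where the standing hypothesis of resolution of singularities over $k$ is invoked.
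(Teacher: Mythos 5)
Your proposal is correct and follows essentially the same line as the paper: an axiom-by-axiom verification, citing Proposition~\ref{prop:Extension} for (A2)(i), Lemma~\ref{lem:IdPush} for (A2)(ii), Lemma~\ref{lem:Funct} for (A3), Lemma~\ref{lem:PushPull} for (A4), the extended homotopy property for (A6), and Definition~\ref{def:IntSupp}(5) for (A7)--(A8), with Mayer--Vietoris giving (A1). The only place to be careful is the resolution step in (A4): Lemma~\ref{lem:PushPull} requires that the closure $\hat U$ of $M' = M\times_N N'$ \emph{inside} the fiber product $\bar M\times_N N'$ be smooth, so in the case where $G$ is a closed immersion the correct move is to blow up $\bar M$ itself along centers disjoint from $M$ (embedded resolution) so that the resulting closure becomes smooth, rather than resolving the fiber product independently --- the latter would produce a smooth model that is no longer a closed subscheme of $\bar M\times_N N'$, whereas the hypotheses of Lemma~\ref{lem:PushPull} demand exactly that. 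Your direct derivation of (A7) from $(\id_X,\id_X)\in\sC_{\id_X}$ is a minor stylistic shortcut versus the paper's route through (A8), but reaches the same conclusion.
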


\begin{proof} We are using the integration with supports on $A$ given by theorem~\ref{thm:Int}. The axiom (A1) follows from Mayer-Vietoris. (A2)(i) is proposition~\ref{prop:Extension}, (A2)(ii) is 
lemma~\ref{lem:IdPush} and (A3) is lemma~\ref{lem:Funct}. (A4) follows from lemma~\ref{lem:PushPull} and (A6) follows from the homotopy property for $A$ together with Mayer-Vietoris, (A7) follows from (A8) and (A8) is definition~\ref{def:IntSupp}(4). The axiom  (A5) follows from the functoriality of pull-back, together with the identities
\[
(G'_*)^{-1}=G^{\prime*},\quad (F'_*)^{-1}=F^{\prime*}.
\]
\end{proof}

\section{Oriented duality theories}\label{sec:OrientDualityThy}  We describe an analog of  Bloch-Ogus twisted duality theory \cite{BlochOgus} for oriented cohomology.   As in the previous section, we will assume that
the base-field $k$ admits resolution of singularities, although this assumption is not needed for definition~\ref{def:OrientedDuality}.

\begin{definition}\label{def:OrientedDuality} An {\em oriented duality theory} $(H,A)$ on $\Sch_k$ consists of
\begin{enumerate}
\item[(D1)] A functor $H:\Sch'_k\to \gr_{\Z/2}\Ab$.
\item[(D2)] A $\Z/2$-graded oriented ring cohomology theory $A$ on $\SP$.
\item[(D3)] For each open immersion  $j:Y\to X$ in $\Sch_k$, a map $j^*:H(X)\to H(Y)$.
\item[(D4)] i. For each smooth pair $(M,X)$, and each  morphism $f:Y\to M$ in $\Sch_k$, a graded cap product map
\[
f^*(-)\cap:A_X(M)\otimes H(Y)\to H(Y\cap f^{-1}(X)).
\]
ii. For $X,Y\in\Sch_k$, a graded external product
\[
\times:H(X)\otimes H(Y)\to H(X\times Y).
\]
\item[(D5)] For each smooth pair $(M,X)$, an isomorphism
\[
\alpha_{M,X}:H(X)\to A_X(M).
\]
\item[(D6)] For $X\in \Sch_k$ and for   $Y\subset X$ a closed subset, a degree 1 map
\[
\del_{X,Y}:H(X\setminus Y)\to H(Y).
\]
\end{enumerate}
We let $[F:(M,X) \to (N,Y)] \text{ in } \SP'\mapsto F_*:A_X(M)\to A_Y(N)$ be the integration with supports on $A$ subjected to the given orientation. The data (D1)-(D6) satisfy
\begin{enumerate}
\item[(A1)] Let $(M,X)$, $(N,Y)$ be smooth pairs, and let $j:M\to N$ be an open immersion with $j^{-1}(Y)=X$. Let $j_Y:X\to Y$ be the restriction of $j$. Then the diagram
\[
\xymatrix{
H(Y)\ar[r]^{\alpha_{N,Y}}\ar[d]_{j_Y^*}&A_Y(N)\ar[d]^{j^*}\\
H(X)\ar[r]_{\alpha_{M,X}}&A_X(M)
}
\]
commutes.
\item[(A2)] Let $(M,X)$, $(N,Y)$ be smooth pairs, let $f:X\to Y$ be a projective morphism in $\Sch_k$, and suppose $f$ extends to a projective morphism $F:M\to N$. Then the diagram
\[
\xymatrix{
H(X)\ar[r]^{\alpha_{M,X}}\ar[d]_{f_*}&A_X(M)\ar[d]^{F_*}\\
H(Y)\ar[r]_{\alpha_{N,Y}}&A_Y(N)
}
\]
commutes.
\item[(A3)] Let $(M,X)$ and $(N,Y)$ be smooth pairs.\\
i. let  $F:N\to M$ be a morphism in $\Sm/k$, and let $f:Y\to M$ be the restriction of $F$.   Let
\[
F^*(-)\cup:A_X(M)\otimes A_Y(N)\to A_{Y\cap f^{-1}(X)}(N)
\]
be the map $a\otimes b\mapsto F^*(x)\cup y$, with $F^*:A_X(M)\to A_{f^{-1}(X)}(N)$ the pull-back. Then the diagram
\[
\xymatrixcolsep{40pt}
\xymatrix{
A_X(M)\otimes H(Y)\ar[r]^{\id\otimes\alpha_{N,Y}}\ar[d]_{f^*(-)\cap}&A_X(M)\otimes A_Y(N)\ar[d]^{F^*(-)\cup}\\
H(Y\cap f^{-1}(X))\ar[r]_{\alpha_{N,Y\cap f^{-1}(X)}}&A_{Y\cap f^{-1}(X)}(N)
}
\]
commutes.\\
ii. The diagram
\[
\xymatrix{
H(X)\otimes H(Y)\ar[r]^{\times}\ar[d]_{\alpha_{M,X}\otimes\alpha_{N,Y}}&H(X\times Y)\ar[d]^{\alpha_{M\times N,X\times Y}}\\
A_X(M)\otimes A_Y(N)\ar[r]_{\times}&A_{X\times Y}(M\times N)
}
\]
commutes.
\item[(A4)] Let $(M,X)$ be a smooth pair and let $Y\subset X$ be a closed subset. Then the diagram
\[
\xymatrixcolsep{50pt}
\xymatrix{ 
H(X\setminus Y)\ar[r]^{\alpha_{M\setminus Y,X\setminus Y}}\ar[d]_{\del_{X,Y}}&A_{X\setminus Y}(M\setminus Y)\ar[d]^{\del_{M,X,Y}}\\
H(Y)\ar[r]_{\alpha_{M,Y}}&A_Y(M)
}
\]
commutes.
\end{enumerate}
The functor $H$ together with the additional structures (D2)-(D6) is the {\em oriented Borel-Moore homology theory} underlying the oriented duality theory.
\end{definition}

\begin{rem} Oriented duality theories on $\Sch_k$ form a category, in the evident manner. Given a $\Z/2$-graded oriented   ring cohomology theory on $\SP$, an {\em extension} of $A$ to an oriented duality theory on $\Sch_k$ is a oriented duality theory $(H, A')$ together with an isomorphism $A\cong A'$ of  $\Z/2$-graded oriented ring cohomology theories on $\SP$. Clearly, two extensions $(H_1, A_1)$ and $(H_2, A_2)$ of $A$ are uniquely isomorphic as extensions of $A$: the only possible choice of isomorphism $H_1\cong H_2$ compatible with the given isomorphisms $A_1\xrightarrow{\beta} A\xleftarrow{\gamma} A_2$ is given by the isomorphisms
\[
H_1(X)\xrightarrow{\beta\circ \alpha^1_{M,X}}A_X(M)\xleftarrow{\gamma\circ \alpha^2_{M,X}}H_2(X)
\]
for any choice of smooth pair $(M,X)$.
\end{rem}

\begin{rem}\label{rem:Graded2}  One has as well the $\Z$-graded or bi-graded versions of oriented duality theories. For the graded version, one typically uses homological grading on $H$, so that the comparison isomorphisms $\alpha$ are of the form
\[
\alpha_{M,X}:H_n(X)\to A^{2d-n}_X(M)
\]
where $d=\dim_kM$ (by additivity, we may assume that $M$ is equi-dimensional over $k$). Using remark~\ref{rem:Grading1}, the projective push-forward map $f_*$ preserve the grading, as do the pull-back maps for open immersions. The cap products become
\[
A^m_X(M)\otimes H_n(Y)\xrightarrow{f^*(-)\cap}H_{n-m}(Y\cap f^{-1}(X)).
\]

In the bi-graded case, we index $H$ to give comparison isomorphisms
\[
\alpha_{M,X}:H_{p,q}(X)\to A^{2d-p,d-q}_X(M)
\]
The second index in the bi-grading plays the role of the ``weight" in the classical Bloch-Ogus theory. The projective push-forward and open pull-back preserve the bi-grading, and the cap products are 
\[
A^{m,n}_X(M)\otimes H_{p,q}(Y)\xrightarrow{f^*(-)\cap}H_{p-m,q-n}(Y\cap f^{-1}(X)).
\]
\end{rem}

\begin{thm}\label{thm:Main} Suppose that $k$ admits resolution of singularities. Let $A$ be an oriented $\Z/2$-graded ring cohomology theory on $\SP$. Then there is a unique extension of $A$ to an oriented duality theory $(H,A)$ on $\Sch_k$.
\end{thm}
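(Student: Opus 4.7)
The plan is to interpret $A$ as an algebraic oriented theory via Theorem~\ref{thm:AlgOrient} and then adapt Mocanasu's construction to the axioms of Definition~\ref{def:AlgOrient}, producing $H$ essentially by the formula $H(X) := A_X(M)$ for any smooth pair $(M,X)$. The hypothesis that $k$ admits resolution of singularities guarantees that every $X \in \Sch_k$ admits such a smooth envelope: embed $X$ in some $\P^N$, then resolve the singularities of the closure to obtain $M$ smooth quasi-projective containing $X$ as a closed subscheme. Given two smooth envelopes $(M_1,X)$ and $(M_2,X)$, one obtains canonical identifications $A_X(M_1) \cong A_X(M_2)$ by constructing a third smooth envelope $M_3$ dominating both (resolve the closure of $X$ embedded diagonally in $M_1 \times_k M_2$) and invoking axiom (A2)(ii) of Definition~\ref{def:AlgOrient}, applied to each of the projective birational morphisms $M_3 \to M_i$ restricting to $\id_X$. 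A cocycle argument using (A3) and (A5) shows these identifications are transitive, so $H(X)$ is well-defined up to canonical isomorphism.

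The remaining structures in Definition~\ref{def:OrientedDuality} are constructed as follows. For a projective morphism $f:X\to Y$ in $\Sch_k$ with smooth envelopes $(M,X)$, $(N,Y)$, one produces a third smooth envelope $(\bar M,X)$ together with a projective extension $\bar F:\bar M \to N$ of $f$ by resolving the closure of the graph of $f$ inside $M\times_k N$; the composite $A_X(M) \cong A_X(\bar M) \xrightarrow{\bar F_*} A_Y(N)$ defines $f_*$, which is independent of choices by Proposition~\ref{prop:Extension}. For an open immersion $j:Y\hookrightarrow X$ with $Z = X\setminus Y$ and envelope $(M,X)$, define $j^*$ as the restriction $A_X(M) \to A_Y(M\setminus Z)$ coming from the triple localization sequence of Remark~\ref{rems:Cohomology}(1). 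For $Y\subset X$ closed, the boundary $\del_{X,Y}:H(X\setminus Y)\to H(Y)$ is the triple boundary $\del_{M,X,Y}:A_{X\setminus Y}(M\setminus Y)\to A_Y(M)$ from the same remark. The cap product with data $(M,X)$ and $f:Y\to M$ is defined by first replacing a smooth envelope $(N,Y)$ of $Y$ by a resolution $\tilde N$ of the closure of the graph of $f$ in $N\times M$, on which $f$ extends to $\tilde f:\tilde N\to M$, and then setting $f^*(a)\cap \beta := \tilde f^*(a)\cup \beta \in A_{f^{-1}(X)}(\tilde N)$. The external product is just the external cross-product on $A$ for envelopes $(M,X)$ and $(N,Y)$. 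Lemma~\ref{lem:PushPull} and Lemma~\ref{lem:BoundaryExtension}, together with axiom (A5), ensure well-definedness in each case.

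The compatibility axioms (A1)--(A4) of Definition~\ref{def:OrientedDuality} are then essentially tautological: (A1) expresses that open pullback on $H$ is the restriction of $A$, (A2) is exactly Lemma~\ref{lem:Funct} combined with the construction of $f_*$, (A3) is the definition of cap and external product, and (A4) is the definition of $\del_{X,Y}$. Uniqueness is handled by the remark following Definition~\ref{def:OrientedDuality}: any extension $(H',A)$ furnishes an isomorphism $\alpha'_{M,X}:H'(X)\to A_X(M)$ for every smooth pair, and the compatibility axioms force a canonical isomorphism $H'\cong H$ respecting all structures.

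The main obstacle will be the well-definedness arguments for the cap product and, more subtly, for the open pull-back, because axiom (A5) of Definition~\ref{def:AlgOrient} only provides comparison isomorphisms across open immersions---a strictly weaker input than in Mocanasu's formulation, where smooth equi-dimensional morphisms are allowed. One must verify that all of Mocanasu's independence-of-envelope arguments can be recast using only open immersions, which should be achievable by a systematic use of Jouanolou's device to replace smooth equi-dimensional maps by open immersions into affine-space bundles, combined with the extended homotopy property of Remark~\ref{rems:Cohomology}(2) to bridge the resulting comparison data.
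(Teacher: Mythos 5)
The overall strategy matches the paper's proof: use Theorem~\ref{thm:AlgOrient} to pass to Mocanasu's framework, set $H(X):=A_X(M)$ for a smooth pair $(M,X)$, and check that the choices don't matter. The roadmap is sound, but there are a few technical slips worth flagging.

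You invoke resolution of singularities in places where it plays no role and would in fact produce the wrong object. To dominate two envelopes $(M_1,X)$ and $(M_2,X)$ there is no closure to resolve: since $X$ is closed in each $M_i$, its diagonal image in $M_1\times_k M_2$ is already a closed subscheme of the smooth variety $M_1\times_k M_2$, so $(M_1\times_k M_2,X)$ is itself a smooth pair dominating both, and the projections $p_i$ restrict to $\id_X$ (note these projections are neither birational nor projective in general, but that is irrelevant for Lemma~\ref{lem:IdPush}). Likewise for the cap product: given $(N,i:Y\to N)$ and $f:Y\to M$, one embeds $Y$ in the smooth scheme $N\times M$ via $(i,f)$. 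Resolving the graph of $f$ --- which is just a copy of $Y$, possibly singular --- would hand you a resolution of $Y$, \emph{not} a smooth envelope of $Y$. Resolution is genuinely needed only in Lemma~\ref{lem:Filt}, where one compactifies a dense open immersion $M\hookrightarrow\bar M$ and the closure may be singular.

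The functoriality of $H$ needs more care than ``well-defined up to canonical isomorphism.'' The paper builds the category $\overline{\SP'}$ by identifying morphisms in $\SP'$ that restrict to the same map on the closed subsets, forms the localization $\sH\SP'$ inverting morphisms over isomorphisms in $\Sch'_k$, and shows that the restriction functor $\res:\sH\SP'\to\Sch'_k$ is an equivalence of categories; then $A_{-}(-)$ descends to $\sH\SP'$ by (A2), (A3), (A7), and $H$ is defined as the composite with a quasi-inverse of $\res$. Your cocycle description captures the same content but a genuine functor on $\Sch'_k$ requires either a global choice of envelope or exactly this localization machinery --- worth spelling out.

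Finally, your closing worry about the weakened (A5) is unfounded. In the paper's verification of (A1) the auxiliary smooth equi-dimensional maps that appear are all product projections, which fall under (A4), while (A5) is invoked only for the open immersions $g\times\id$ and $\id\times\tilde{j}$. The two axioms together are precisely sufficient; no Jouanolou trick is needed. The restriction of (A5) to open immersions affects only which pull-backs the resulting $H$ inherits (only open pull-backs are functorial a priori; smooth projection pull-back is recovered afterward by a separate lemma), not the well-definedness of the construction itself.
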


\begin{proof} We have already discussed the uniqueness. The existence follows from the results of \cite[\S2.1]{Mocanasu}, with some minor modifications. We give a sketch of the construction for the reader's convenience. refering to \cite{Mocanasu} for details. We will use throughout theorem~\ref{thm:AlgOrient}, that an oriented $\Z/2$-graded ring cohomology theory defines an oriented algebraic cohomology theory.

 Call morphisms $F,G:(M,X)\to (N,Y)$ in $\SP'$  equivalent if $F$ and $G$ induce the same morphism $X\to Y$, and let $\overline{\SP'}$ be the quotient of $\SP'$ by this equivalence relation.

 We have the restriction functor $\res:\overline{\SP'}\to \Sch'_k$, sending $(M,X)$ to $X$ and $[F]:(M,X)\to (N,Y)$ to the restriction $F_{|X}:X\to Y$. We let $\sH\SP'$ be the category formed from $\overline{\SP'}$ by inverting all morphisms over an isomorphism in $\Sch'_k$.  For each $X$ in $\Sch'_k$, the fiber of $\res$ over $X$ is a left-filtering category with at most one morphism between any two objects, so the induced map $\res:\sH\SP\to \Sch'_k$ is an equivalence of categories.
 
 By definition~\ref{def:AlgOrient}(A2,A3,A7), sending $(M,X)$ to $A_X(M)$ and $F:(M,X)\to (N,Y)$ in $\SP'$ to $F_*:A_X(M)\to A_Y(N)$ descends to a well-defined functor
 \[
 A_{-}(-):\sH\SP'\to \gr_{\Z/2}\Ab.
 \]
 Since $\res:\sH\SP'\to \Sch'_k$ is an equivalence, this gives us the functor $H:\Sch'_k\to \gr_{\Z/2}\Ab$  and the natural isomorphisms
 \[
 \alpha_{M,X}:H(X)\to A_X(M)
 \]
 satisfying axiom (A2). 
 
 To define the pull-back map $j^*:H(X)\to H(Y)$ associated to an open immersion $j:Y\to X$, choose a smooth pair $(M,X)$. It is easy to see that there is a smooth pair $(N,Y)$ and an open immersion $\tilde{j}:N\to M$ extending $j$. Let $j^*:H(X)\to H(Y)$ be the unique map making the diagram
 \[
 \xymatrix{
 H(X)\ar[r]^{\alpha_{M,X}}\ar[d]_{j^*}&A_X(M)\ar[d]^{\tilde{j}^*}\\
 H(Y)\ar[r]_{\alpha_{N,Y}}&A_Y(N)
 }
 \]
 commute.  
 
 To verify (A1), let $(M',X)$, $(N',Y)$ be smooth pairs, and let $g:N'\to   M'$ be an open immersion extending $j$.  We have the commutative diagram
 \[
 \xymatrix{
 Y\ar@{^(->}[dr]\ar@{=}[rr]\ar@{=}[dd]&&Y\ar@{^(->}[d]\ar@/^20pt/[dddr]^j
 \\
 &N'\times N\ar[r]^{\id\times \tilde{j}}\ar[d]_{g\times\id}&N'\times M\ar[d]^{g\times\id}\\
 Y\ar@{^(->}[r]\ar@/_20pt/[drrr]_j&M'\times N\ar[r]_{\id\times\tilde{j}}&M'\times M\\
&&&X\ar@{^(->}[ul]
}
\]
By definition~\ref{def:AlgOrient}(A5), we have
\[
((g\times\id)_*)^{-1}\circ (\id\times\tilde{j})^*=((\id\times\tilde{j}_*)^{-1}\circ (g\times\id)^*
\]
From the commutative diagram
\[
\xymatrix{
Y\ar@{^(->}[r]\ar[d]_j&N'\times M\ar[d]_{g\times\id}\ar[r]^{p_1}&N'\ar[d]^g\\
X\ar@{^(->}[r]&M'\times M\ar[r]^{p_1}&M'
}
\]
and definition~\ref{def:AlgOrient}(A4), we have
\[
p_{1*}\circ(g\times\id)^*=g^*\circ p_{1*}.
\]
Similarly, the commutative diagram
\[
\xymatrix{
Y\ar@{^(->}[r]\ar[d]_j&M'\times N\ar[d]_{g\times\id}\ar[r]^{p_2}&N\ar[d]^g\\
X\ar@{^(->}[r]&M'\times M\ar[r]^{p_2}&M
}
\]
gives
\[
p_{2*}\circ(\id\times\tilde{j})^*=\tilde{j}^*\circ p_{2*}.
\]
Thus, we have the commutative diagrams
\[
\xymatrix{
H(Y)\ar[rd]_{\alpha_{M'\times N,Y}}\ar@/^20pt/[rrd]^{\alpha_{N,Y}}\\
&A_Y(M'\times N)\ar[r]_-{p_{2*}}&A_Y(N)\\
&A_X(M'\times M)\ar[u]^{(\id\times\tilde{j})^*}\ar[r]^-{p_{2*}}&A_X(M)\ar[u]_{\tilde{j}^*}\\
H(X)\ar[ru]^{\alpha_{M'\times M,X}}\ar@/_20pt/[rru]_{\alpha_{M,X}}\ar[uuu]^{j^*}
}
\]
\[
 \xymatrix{
 H(Y)\ar[dr]^{\alpha_{N'\times N,Y}}\ar@{=}[rr]\ar@{=}[dd]&&H(Y)\ar[d]_{\alpha_{N'\times M,Y}}
 \\
 &A_Y(N'\times N)\ar[r]^{(\id\times \tilde{j})_*}\ar[d]_{(g\times\id)_*}&A_Y(N'\times M)\\
H(Y)\ar[r]_{\alpha_{M'\times N,Y}}&A_Y(M'\times N)
&A_Y(M'\times M)\ar[u]_{(g\times\id)^*}\ar[l]^{(\id\times\tilde{j})^*}\\
&&&H(X)\ar[ul]_{\alpha_{M'\times M,X}}\ar@/^20pt/[ulll]_{j^*}\ar@/_20pt/[uuul]_{j^*}
}
\]
and
\[
\xymatrix{
H(Y)\ar[rd]_{\alpha_{N'\times M,Y}}\ar@/^20pt/[rrd]^{\alpha_{N',Y}}\\
&A_Y(M'\times N)\ar[r]_-{p_{1*}}&A_Y(N')\\
&A_X(M'\times M)\ar[u]^{(g\times\id)^*}\ar[r]^-{p_{1*}}&A_X(M')\ar[u]_{g^*}\\
H(X)\ar[ru]^{\alpha_{M'\times M,X}}\ar@/_20pt/[rru]_{\alpha_{M',X}} 
}
\]
 
Thus, the diagram
\[
\xymatrix{
H(Y)\ar[r]^{\alpha_{N',Y}} &A_Y(N')\\
 H(X)\ar[r]_{\alpha_{M',X}} \ar[u]^{j^*}&A_X(M')\ar[u]_{g^*}\\
}
\]
commutes, as desired.

To define the cap product pairing (D4)(i) for a smooth pair $(M,X)$ and a morphism $f:Y\to M$ wth $f(Y)\subset X$,  choose a smooth pair $(N,i:Y\to N)$ and embed $Y$ in $N\times M$ by $(i,f)$. Let $f^*(-)\cap$ be the unique morphism making
\[
\xymatrixcolsep{60pt}
\xymatrix{
A_X(M)\otimes H(Y)\ar[r]^{\id\otimes\alpha_{N\times M,Y}}\ar[d]_{f^*(-)\cap}&A_X(M)\otimes A_Y(N\times M)\ar[d]^{p_2^*(-)\cup}\\
H(Y\cap f^{-1}(X))\ar[r]_{\alpha_{N\times M,Y\cap f^{-1}(X)}}&A_{Y\cap p_2^{-1}(X)}(N\times M)
}
\]
commute. If we have another smooth pair $(N', i':Y\to N')$ and morphism $G:N'\to M$ extending $f$, consider the commutative diagram
\[
\xymatrix{
N'\ar[dr]_{G}\ar[r]^-{(i',G)}&N'\times M  \ar[d]^{G\circ p_2}\\
&M}
\]
We embed $Y$ in $N'\times M$ by $(i',f)$. The projection formula gives, for $b\in A_Y(N')$, $a\in A_X(M)$,
\[
(i',G)_*(G^*(a)\cup b)=(i',G)_*((i',G)^*p_2^*(a)\cup b)=p_2^*(a)\cup (i',G)_*(b),
\]
so we can replace $(i':Y\to N', G:N'\to M)$ with $((i',f):Y\to N'\times M,p_2)$. Similarly, we have the embedding $(i',i,f):Y\to N'\times N\times M$ and for $a\in A_X(M)$, $b\in A_Y(N'\times N\times M)$, we have
\begin{align*}
&p_{N'M*}^{N'NM}(p_M^{N'NM*}(a)\cup b)=p_M^{N'M*}(a)\cup p_{N'M*}^{N'NM}(b),\\
&p_{NM*}^{N'NM}(p_M^{N'NM*}(a)\cup b)= p_M^{NM*}(a)\cup p_{NM*}^{N'NM}(b).
\end{align*}
Here $p_{NM}^{N'NM}$ is the projection $N'\times N\times M\to N\times M$, etc. The commutativity in (A3) follows from these identitites.

For the external product (D4)(ii), we fix as above smooth pairs $(M,X)$, $(N,Y)$ and define $\times:H(X)\otimes H(Y)\to H(X\times Y)$ as the unique map making 
\[
\xymatrix{
H(X)\otimes H(Y)\ar[r]^{\times}\ar[d]_{\alpha_{M,X}\otimes\alpha_{N,Y}}&H(X\times Y)\ar[d]^{\alpha_{M\times N,X\times Y}}\\
A_X(M)\otimes A_Y(N)\ar[r]_{\times}&A_{X\times Y}(M\times N)
}
\]
commute. If we have other smooth pairs $(M',X)$, $(N',Y)$, consider the diagram 
\[
\xymatrix{
A_X(M\times M')\otimes A_Y(N\times N')\ar[d]_{p_{1*}\otimes p_{1*}}\ar[r]^\times&
A_{X\times Y}(M\times M'\times N\times N')\ar[d]^{p_{13*}}\\
A_X(M)\otimes A_Y(N)\ar[r]_\times& A_{X\times Y}(M\times N)
}
\]
By remark~\ref{rem:ProjForm}, this diagram commutes.  Using the similar diagram with $M',N'$ replacing $M,N$ in the bottom row verifies (A3)(ii).

Finally, for (A4), choose a smooth pair $(M,X)$, and let $\del_{X,Y}$ be the unique map making
\[
\xymatrixcolsep{50pt}
\xymatrix{
H(X\setminus Y)\ar[r]^{\alpha_{M\setminus Y,X\setminus Y}}\ar[d]_{\del_{X,Y}}&A_{X\setminus Y}(M\setminus Y)\ar[d]^{\del_{M,X,Y}}\\
H(Y)\ar[r]_{\alpha_{M,Y}}&A_Y(M)
}
\]
commute. If we have another smooth pair $(M',X)$, we have as well the smooth pair $(M\times M',X)$ and commutative diagram
\[
\xymatrix{
A_{X\setminus Y}(M\setminus Y)\ar[d]_{\del_{M,X,Y}}&\ar[l]_-{p_{1*}}\ar[r]^-{p_{2*}}
A_{X\setminus Y}(M\times M'\setminus Y)\ar[d]^{\del_{M\times M',X,Y}}&
A_{X\setminus Y}(M'\setminus Y)\ar[d]^{\del_{M',X,Y}}\\
A_Y(M)&A_Y(M\times M')\ar[l]_-{p_{1*}}\ar[r]^-{p_{2*}}&A_Y(M').
}
\]
(see lemma~\ref{lem:BoundaryExtension}) from which (A4) follows directly.

\end{proof}

Of course, the role of the Borel-Moore homology theory $H$ in an oriented duality theory is just to say that certain properties of cohomology with supports $A_X(M)$ depend only on $X$, not on the choice of smooth pair $(M,X)$. Besides the properties given by the axiomatics (projective push-forward, open pull-back, cup products and boundary map) one has the following properties and structures:\\
\\
{\em Functoriality of open pull-back, cap products and external products}.  For $j:U\to V$, $g:V\to X$ open immersions in $\Sch_k$, we have
\[
j^*\circ g^*=(g\circ j)^*:H(X)\to H(U)
\]
and $\id_X^*=\id_{H(X)}$.
This follows from the functoriality of   open pull-back for the oriented ring cohomology theory $A$, using (A1) to compare.

For the cap products, we have three functorialities:
\begin{enumerate}
\item Take $X, Y\in\Sch_k$, $M\in \Sm/k$, a smooth pair $(N,X)$ and morphisms $f:Y\to M$, $g:M\to N$, . Then
\[
(g\circ f)^*(a)\cap b=f^*(g^*(a))\cap b
\]
for $a\in A_X(N)$, $b\in H(Y)$, with $g^*:A_X(N)\to A_{g^{-1}(X)}(M)$ the pull-back. 
\item Let $h:Y\to Z$ be a projective morphism in $\Sch_k$, $(M,X)$ a smooth pair, and $f:Z\to M$ a morphism. Then
\[
h_*((f\circ h)^*(a)\cap b)=f^*(a)\cap h_*(b).
\]
\item Let $j:U\to Y$ be an open immersion in $\Sch_k$, and let $(M,X)$ be a smooth pair, and let $f:Y\to M$ be a morphism. Then
\[
j^*(f^*(a)\cap b)=((f\circ j)^*(a)\cap j^*(b)
\]
for $a\in A_X(M)$, $b\in H(Y)$.
\end{enumerate}
The first and third identities  follow from the naturality of cup product with respect to pull-back, and the second from the projection formula. Finally, the fact that pull-back is a ring homomorhism yields the identity
\[
f^*(a\cup b)\cap c=f^*(a)\cap(f^*(b)\cap c)
\]
for $a,b\in A_X(M)$, $c\in H(Y)$.

The external products are functorial for push-forward: For projective morphisms $f:X\to X'$, $g:Y\to Y'$, we have
\[
(f\times g)_*(a\times b)=f_*(a)\times g_*(b)\in H(X'\times Y');\quad a\in H(X), b\in H(Y).
\]
This follows from remark~\ref{rem:ProjForm}.
\\
{\em Long exact sequence of a pair and Mayer-Vietoris}. Let $i:Y\to X$ be a closed subset of $X\in \Sch_k$ and let $j:U\to X$ be the open complement. Then the sequence
\[
\ldots\to H(U)\xrightarrow{\del_{X,Y}}H(Y)\xrightarrow{i_*}H(X)\xrightarrow{j^*}H(U)\to\ldots
\]
is exact. Indeed, we  use (A1), (A2) and (A4) to compare with the long exact sequence of the triple $(M,X,Y)$, having chosen a smooth pair $(M,X)$.

If we have an open cover of some $X\in\Sch_k$, $X=U\cup V$, the exact sequence of a pair gives formally the long exact Mayer-Vietoris sequence
\[
\ldots\to H(X)\xrightarrow{(j_U^*,j_V^*)}H(U)\oplus H(V)\xrightarrow{j^{U*}_{UV}-j^{V*}_{UV}}
H(U\cap V)\xrightarrow{\del_{X,U,V}}H(U\cap V)\to\ldots
\]

The localization sequence is natural with respect to pull-back by open immersions and by push-forward with respect to projective morphisms. 
\begin{prop} Let $(H, A)$ be an oriented duality theory.\\
\\
1. Let $Y\subset Y'\subset X$ be closed subsets of $X\in\Sch_k$. Let $U=X\setminus Y$, $U'=X\setminus Y'$, with inclusions $j:U'\to U$ and $i:Y\to Y'$. Then the diagram
\[
\xymatrix{
H_{a,b}(U)\ar[r]^{\partial_{X,Y}}\ar[d]_{j^*}&H_{a-1,b}(Y)\ar[d]^{i_*}\\
H_{a,b}(U')\ar[r]_{\partial_{X,Y'}}&H_{a-1,b}(Y')
}
\]
commutes.\\
\\
2. Let $f:X'\to X$ be a projective morphism in $\Sch_k$, let $Y\subset X$ be a closed subset,  let $Y'=f^{-1}(Y)$, $U=X\setminus Y$, $U'=X'\setminus Y'$, and let $f_U:U'\to U$, $f_Y:Y'\to Y$  be the respective restrictions of $f$. Then $f_U$ is projective and the diagram
\[
\xymatrix{
H_{a,b}(U')\ar[d]_{f_{U*}}\ar[r]^{\partial_{X',Y'}}&H_{a-1,b}(Y')\ar[d]^{f_{Y*}}\\
H_{a,b}(U)\ar[r]_{\partial_{X,Y}}&H_{a-1,b}(Y)
}
\]
commutes.
\end{prop}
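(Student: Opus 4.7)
The plan is to reduce each diagram to a diagram of cohomology with supports on suitable smooth envelopes, by means of the comparison isomorphisms $\alpha_{M,X}$ and the axioms (A1)--(A4) of an oriented duality theory, and then invoke the naturality results already established for the integration with supports. Since every quasi-projective $X$ embeds as a closed subscheme of an open subset of $\P^N$, I can always choose a smooth pair $(M,X)$; similarly for $(M',X')$ in part~(2).

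For part~(1), the closed subsets $Y\subset Y'\subset X$ are all closed in $M$. By (A4), $\partial_{X,Y}$ and $\partial_{X,Y'}$ correspond under $\alpha$ to the triple boundaries $\partial_{M,X,Y}$ and $\partial_{M,X,Y'}$; by (A1), $j^*$ corresponds to pull-back along the open immersion $\tilde j:M\setminus Y'\hookrightarrow M\setminus Y$; and by (A2), together with the enlarge-support identity $\id_{M*}=\id_M^*$ of definition~\ref{def:AlgOrient}(A8), the push-forward $i_*$ corresponds to $\id_M^*:A_Y(M)\to A_{Y'}(M)$. The diagram then reduces to
\[
\partial_{M,X,Y'}\circ\tilde j^* \;=\; \id_M^*\circ\partial_{M,X,Y},
\]
which I would establish by factoring $\partial_{M,X,Y}=\partial_{M,Y}\circ(\text{forget support})$ and applying naturality of the localization boundary $\partial_{M,Y}$ with respect to the morphism $\id_M:(M,Y')\to(M,Y)$ in $\SP$ (valid because $\id_M^{-1}(Y)=Y\subset Y'$).

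For part~(2), factor $X'\to X\hookrightarrow M$ as a closed immersion into $\P^n\times M$ followed by the projection, and resolve singularities of its closure to obtain a smooth $M'\supset X'$ together with a projective extension $F:M'\to M$ of $f$. Then the chains $\tilde Y:=Y'\subset \tilde Y':=X'\subset M'$ and $\tilde X:=Y\subset\tilde X':=X\subset M$ satisfy $F^{-1}(\tilde X)\cap\tilde Y'=f^{-1}(Y)=\tilde Y$ and $F(\tilde Y')\subset\tilde X'$, so axiom~(6) of definition~\ref{def:IntSupp} yields
\[
\partial_{M,X,Y}\circ F_* \;=\; F_*\circ\partial_{M',X',Y'}.
\]
By (A2), the right-hand $F_*:A_{Y'}(M')\to A_Y(M)$ corresponds to $f_{Y*}$; by (A1), (A2) and excision along the open immersion $M'\setminus F^{-1}(Y)\hookrightarrow M'\setminus Y'$, the left-hand $F_*:A_{X'\setminus Y'}(M'\setminus Y')\to A_{X\setminus Y}(M\setminus Y)$ corresponds to $f_{U*}$ computed via the smooth envelopes $(M'\setminus F^{-1}(Y),U')$ and $(M\setminus Y,U)$. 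Finally (A4) identifies the two boundaries, and the required diagram commutes.

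The principal subtlety lies in part~(2): one must track carefully that $F^{-1}(Y)$ is in general strictly larger than $Y'$ inside $M'$, which is precisely why the push-forward on the open part in definition~\ref{def:IntSupp}(6) factors through excision along $M'\setminus F^{-1}(Y)\hookrightarrow M'\setminus Y'$, and why the correct smooth envelope for $U'$ in applying (A2) is the smaller one, $M'\setminus F^{-1}(Y)$. Part~(1), by contrast, is routine once the reduction is made, being a direct consequence of the naturality of the triple boundary within Panin's cohomology-with-supports formalism.
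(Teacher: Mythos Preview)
Your proof is correct and follows essentially the same route as the paper: transfer both diagrams to cohomology with supports via the comparison isomorphisms and then invoke, respectively, naturality of the triple boundary and axiom~(6) of definition~\ref{def:IntSupp}. One simplification the paper makes in part~(2): since $f$ is projective, the factorization $X'\hookrightarrow X\times\P^n\to X$ already yields a \emph{closed} immersion $X'\hookrightarrow M\times\P^n$ with $M\times\P^n$ smooth and the projection $M\times\P^n\to M$ projective, so you can take $M'=M\times\P^n$ directly---no closure or resolution of singularities is needed. Your careful tracking of the distinction between $F^{-1}(Y)$ and $Y'$, and of the excision step built into definition~\ref{def:IntSupp}(6), is exactly right and is the one point that requires attention.
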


\begin{proof} For (1), take a closed immersion $X\to M$ with $M\in \Sm/k$. Let $N=M\setminus Y$, $N'=M\setminus Y'$. The identity map on $M$ gives the map in $\SP$,  $(M, Y')\to (M,Y)$, and the inclusion $N'\to N$ gives the map $(N,U)\to (N',U')$; these arise from the map of triples $(M,X,Y)\to (M,X,Y')$.   Via the comparison isomorphisms $\alpha_{**}$, the diagram in (1) is isomorphic to
\[
\xymatrix{
E^{p,q}_U(N)\ar[r]^{\partial_{M,X,Y}}\ar[d]_{j^*}&E^{p+1,q}_Y(M)\ar[d]^{\id^*}\\
E^{p,q}_{U'}(N')\ar[r]_{\partial_{X,Y'}}&E^{p+1,q}_{Y'}(M)
}
\]
The commutativity of this diagram follows directly from the naturality of $\del_{**}$ (definition~\ref{def:cohomology}(1)) and the construction of the long exact sequence of a triple.

A similar argument proves (2). Indeed, take a closed immersion $X\to M$ with $M\in\Sm/k$. Since $f:X'\to X$ is projective, we can factor $f$ as a closed immersion $i:X'\to X\times\P^n$ followed by the projection $X\times\P^n\to X$. This gives us the closed immersion $X'\to M\times\P^n$ and the projection $M\times\P^n\to M$ extends $f$, giving us the map $(M\times\P^n,X')\to M,X)$ in $\SP'$. Using the naturality of $\del_{**}$ described in definition~\ref{def:IntSupp}(6) completes the proof.
 \end{proof}
\ \\
{\em Pull-back by a smooth projection}. Although it appears that smooth pull-back depends on the choice of smooth pair, one does have a well defined pull-back 
\[
p^*:H(X)\to H(X\times F)
\]
for $F\in \Sm/k$.

\begin{lem} For $F\in\Sm/k$, $X\in\Sch_k$, let $p:X\times F\to F$ be the projection. Then there is a pull-back map $p^*:H(X)\to H(X\times F)$ such that, for each smooth pair $(M,X)$, the diagram
\[
\xymatrixcolsep{50pt}
\xymatrix{
H(X)\ar[r]^{\alpha_{M,X}}\ar[d]_{p^*}&A_X(M)\ar[d]^{p_*}\\
H(X\times F)\ar[r]_{\alpha_{M\times F,X\times F}}&A_{X\times F}(M\times F)
}
\]
commutes.
\end{lem}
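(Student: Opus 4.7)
The plan is to define $p^*$ by conjugating the smooth pull-back on $A$ with the comparison isomorphisms $\alpha$ and then show the result is independent of the chosen smooth pair. Fix a smooth pair $(M,X)$. The projection $p_M:M\times F\to M$ is a morphism $(M\times F,X\times F)\to (M,X)$ in $\SP$ (since $p_M^{-1}(X)=X\times F$), so the pull-back $p_M^*:A_X(M)\to A_{X\times F}(M\times F)$ is available, and I set
\[
p^*_{(M,X)}:=\alpha_{M\times F,X\times F}^{-1}\circ p_M^*\circ \alpha_{M,X}:H(X)\to H(X\times F).
\]
This makes the stated diagram commute by construction; it remains to show $p^*_{(M,X)}=p^*_{(M',X)}$ for any two smooth pairs $(M,X)$, $(M',X)$.

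Given two such pairs, I consider the auxiliary smooth pair $(M\times M',X)$ with $X$ embedded diagonally via $(i,i')$. The smooth projection $q_1:M\times M'\to M$ restricts to $\id_X$, so by Definition~\ref{def:Pushforward} the extended push-forward $q_{1*}:A_X(M\times M')\to A_X(M)$ is defined through a projective compactification $\bar q_1:\overline{M\times M'}\to M$ (furnished by Lemma~\ref{lem:Filt}). Axiom (A1) of Definition~\ref{def:OrientedDuality} applied to the open immersion $j:M\times M'\hookrightarrow \overline{M\times M'}$ and axiom (A2) applied to $\bar q_1$ together give
\[
q_{1*}\circ \alpha_{M\times M', X}=\alpha_{M,X},
\]
and the same argument yields $(q_1\times\id_F)_*\circ \alpha_{(M\times M')\times F, X\times F}=\alpha_{M\times F, X\times F}$. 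By the symmetric argument in $M'$, it suffices to prove $p^*_{(M,X)}=p^*_{(M\times M', X)}$.

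The key step is the base change identity
\[
p_M^*\circ q_{1*}=(q_1\times\id_F)_*\circ p_{M\times M'}^*,
\]
as maps $A_X(M\times M')\to A_{X\times F}(M\times F)$. This follows from axiom (A4) of algebraic oriented theories (Theorem~\ref{thm:AlgOrient}) applied to the transverse Cartesian bottom square
\[
\xymatrix{
(M\times M')\times F\ar[r]^-{q_1\times\id}\ar[d]_{p_{M\times M'}}&M\times F\ar[d]^{p_M}\\
M\times M'\ar[r]_-{q_1}&M
}
\]
with top square consisting of the identity morphisms on $X$ and $X\times F$. The smoothness of $p_M$ and $p_{M\times M'}$ provides both transversality and the smooth equi-dimensional vertical arrows required by (A4), and the restrictions of $q_1$ and $q_1\times\id$ to $X$ and $X\times F$ are identities, hence projective. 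Substituting this identity together with the two $\alpha$-compatibilities above into the defining formulas for $p^*_{(M,X)}$ and $p^*_{(M\times M',X)}$ yields the desired equality.

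The principal technical issue is that $q_1$ is not itself projective, so the base change does not come directly from Lemma~\ref{lem:PushPull}; axiom (A4), formulated precisely for this situation, bridges the gap by reducing to Lemma~\ref{lem:PushPull} for the projective compactification $\bar q_1\times\id_F$ combined with excision along $j\times\id_F$.
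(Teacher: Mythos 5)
Your proof is correct and follows essentially the same route as the paper's: both reduce to comparing two smooth pairs via the product $M\times M'$ (the paper writes $M\times N$), use the fact that the comparison isomorphisms $\alpha$ are intertwined by the extended push-forwards along the projections to the factors, and apply the base-change compatibility of smooth pull-back with extended push-forward (axiom (A4) of Definition~\ref{def:AlgOrient}, via Lemma~\ref{lem:PushPull}) to the transverse Cartesian square involving $p_M$ and $p_{M\times M'}$. The only difference is cosmetic: you spell out the derivation of $q_{1*}\circ\alpha_{M\times M',X}=\alpha_{M,X}$ from axioms (A1) and (A2) and a compactification of $q_1$, whereas the paper takes this as built into the construction of $H$ as the functor $A_-(-)$ on $\sH\SP'$.
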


\begin{proof} Of course, we define $p^*:H(X)\to H(X\times F)$ to be the unique map making the above diagram commute, for one fixed choice $(M,X)$ of a smooth pair.

Let  $(N,X)$ be another smooth pair. We have the cartesian transverse diagram
\[
\xymatrix{
N\times F\ar[d]_q&M\times N\times F\ar[l]_-{\pi_{23}}\ar[r]^-{\pi_{13}}\ar[d]_{\pi_{12}}
&M\times F\ar[d]^p\\
N&M\times N\ar[l]^-{p_2}\ar[r]_-{p_1}&M
}
\]
where the maps are the respective projections. This gives us the commutative diagram
\[
\xymatrix{
A_{X\times F}(N\times F)&\ar[l]_-{\pi_{23*}}\ar[r]^-{\pi_{13*}}A_{X\times F}(M\times N\times F)
&A_{X\times F}(M\times F)\\
A_X(N)\ar[u]^{q^*}&A_X(M\times N)\ar[u]_{\pi_{12}^*}\ar[l]^-{p_{2*}}\ar[r]_-{p_{1*}}&A_X(M)
\ar[u]_{p^*}
}
\]
which gives the desired commutativity.
\end{proof}
The cap product is also natural with respect to this pull-back, and we have
\[
p_U^*\circ j^*=(j\times\id)^*\circ p^*
\]
for an open immersion $j:U\to X$, where $p_U:U\times F\to U$ is the projection. Finally, for $g:V\to F$ an open immersion in $\Sm/k$, let $p_V:X\times V\to X$ be the projection. Then
\[
p_V^*=(\id\times g)^*\circ p^*.
\]
\ \\
{\em Homotopy invariance}. Let $p:\A^n\times X\to X$ be the projection. Then 
\[
p^*:H(X)\to H(\A^n\times X)
\]
is an isomorphism. This follows directly from the homotopy invariance of $A$, together with the existence of the well-defined pull-back $p^*$.\\
\\
{\em Chern class operators}. Let $E\to X$ be a vector bundle of rank $r$ on some $X\in \Sch_k$. $X$ is quasi-projective,  so choose a closed immersion $i:X\to U$, with $U\subset \P^n$ an open subscheme. This gives us  the  very ample line bundle $O_X(1)$ on $X$. For $m>>0$, the vector bundle $E(m)$ is generated by global sections; a choice of generating sections $s_0,\ldots, s_M$ gives a morphism $f:X\to \Gr(M,r)$ with $f^*(E_{M,r})\cong E(m)$, where $E_{M,r}\to \Gr(M,r)$ is the universal bundle. Thus, we have the locally closed immersion $(i,f):X\to \P^n\times\Gr(M,r)$ with $(i,f)^*(\sO(-m)\boxtimes E_{M,r})\cong E$; choosing an open subscheme $V\subset \P^n\times\Gr(M,r)$ such that $(i,f):X\to V$ is a closed immersion, we have a smooth pair $(V,X)$ and a vector bundle $\sE$ on $V$ which restricts to $E$ on $X$. Define the {\em Chern class operator}
\[
\tilde{c}_p(L):H(X)\to H(X)
\]
by setting $\tilde{c}_p(L)(b):=(i,f)^*(c_p(\sE))\cap b$. 

One needs to check that $\tilde{c}_p(E)$ is independent of the choices we have made.  This follows from
\begin{prop}[\hbox{\cite[\S3.2, Lemma]{Fulton}}] \label{prop:FultonK0} For $X\in Sch_k$, the pull-back of locally free sheaves induces an isomorphism
\[
K_0(X) \to \lim_{\substack{\to\\f:X\to V\in\Sm/k}}K_0(V).
\]
\end{prop}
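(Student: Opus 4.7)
The plan is to prove the isomorphism by verifying its two constituent statements separately: (a) every class in $K_0(X)$ is the pullback of a class from some smooth quasi-projective $V$, and (b) if two classes on some $V$ become equal after pulling back to $X$, then they become equal after a further refinement to a smooth quasi-projective $V'$ equipped with a factorization $X \to V' \to V$. The index category of morphisms $f : X \to V$ with $V \in \Sm/k$ is filtered in the appropriate sense: any two $f_i : X \to V_i$ are dominated by $(f_1, f_2) : X \to V_1 \times V_2$, and $V_1 \times V_2$ is again smooth quasi-projective. Together (a) and (b) yield the asserted bijection.

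Part (a) is essentially carried out in the paragraph preceding the proposition: writing $\xi = [E] - [F] \in K_0(X)$ for locally free $E, F$, a sufficiently high twist by the very ample $\sO_X(1)$ makes $E(m)$ and $F(m)$ globally generated, and a choice of generating sections produces morphisms $X \to \Gr(M_i, r_i)$ along which the universal quotients restrict to $E(m)$ and $F(m)$. Combining with the projective embedding $X \inj \P^n$ and passing to an open smooth neighborhood of $X$ inside the resulting smooth ambient product yields the required $f : X \to V$ and a class $\tilde\xi \in K_0(V)$ with $f^*\tilde\xi = \xi$.

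For part (b), suppose $\beta = [\sE] - [\sF] \in K_0(V)$ satisfies $f^*\beta = 0$ in $K_0(X)$. By the definition of $K_0$, the equality $[f^*\sE] = [f^*\sF]$ is witnessed by a finite $\Z$-linear combination of short exact sequences of vector bundles on $X$. The crux is to realize each such short exact sequence $0 \to A \to B \to C \to 0$ on $X$ as the pullback of an analogous sequence on a smooth refinement of $V$. Using (a) we first lift $B$ and $C$ to vector bundles $\sB, \sC$ on a smooth $V_1$ equipped with $X \to V_1 \to V$. The scheme of $\sO_{V_1}$-linear surjections $\sB \to \sC$ is an open subscheme of $\Tot(\sHom(\sB, \sC))$, the total space of a vector bundle over $V_1$, and is therefore smooth and quasi-projective; the given surjection $B \to C$ on $X$ furnishes a factorization $X \to V' \to V_1$ where $V'$ denotes this scheme of surjections. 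On $V'$ the kernel of the tautological surjection is a locally free sheaf pulling back to $A$, producing the desired lifted short exact sequence. Passing to an iterated fiber product of the finitely many smooth schemes constructed this way gives a single smooth refinement of $V$ on which $\beta$ pulls back to zero.

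The main obstacle is the bookkeeping in part (b): the finitely many short exact sequences witnessing the $K_0$-relation are individually realized on possibly different smooth schemes, and must be combined into a single smooth refinement of the original $V$. This is handled by iterated fiber products, and smoothness and quasi-projectivity are preserved at each step because the scheme of surjections between two vector bundles over a smooth quasi-projective base is itself smooth and quasi-projective, being open in the total space of a vector bundle there.
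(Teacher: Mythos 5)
The paper does not give a proof of this proposition; it only cites \cite[\S3.2, Lemma]{Fulton}, so there is no in-paper argument to compare against. Judged on its own terms, your strategy (surjectivity by lifting bundles via generating sections and the projective embedding, injectivity by lifting the $K_0$-relations to a smooth refinement) is the right shape, and your surjectivity argument in part (a) is fine. But part (b) has a genuine gap.

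Lifting each short exact sequence $0\to A_i\to B_i\to C_i\to 0$ on $X$ to a sequence $0\to \sA_i\to \sB_i\to \sC_i\to 0$ on a smooth refinement $g:V'\to V$ does not by itself force $g^*\beta=0$. Writing the hypothesis $f^*\beta=0$ as an identity
\[
[f^*\sE]-[f^*\sF]=\textstyle\sum_i\epsilon_i\bigl([B_i]-[A_i]-[C_i]\bigr)
\]
in the \emph{free abelian group} on isomorphism classes of bundles on $X$, the equality holds because of a combinatorial cancellation pattern: various of the bundles $f^*\sE$, $f^*\sF$, $A_i$, $B_i$, $C_i$ are pairwise isomorphic on $X$. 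To conclude $[g^*\sE]-[g^*\sF]=0$ in $K_0(V')$ you need the same identity to hold in the free abelian group on bundles on $V'$, and that requires the corresponding lifted bundles to be pairwise isomorphic on $V'$. Since you construct the lifts of the different sequences independently (each kernel $\sA_i$ is whatever the tautological kernel happens to be on the scheme of surjections), there is no reason for, say, $\sA_1\cong\sA_2$ on $V'$ even when $A_1\cong A_2$ on $X$. A minimal example: if $[f^*\sE]-[f^*\sF]=\bigl([B_1]-[A_1]-[C_1]\bigr)-\bigl([B_2]-[A_2]-[C_2]\bigr)$ with $B_1\cong f^*\sE$, $B_2\cong f^*\sF$, $A_1\cong A_2$, $C_1\cong C_2$, your construction produces unrelated $\sA_1,\sA_2$ and $\sC_1,\sC_2$, and the lifted relations do not combine to kill $g^*\beta$. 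The last sentence of your part (b) asserts the conclusion but does not follow from what precedes it.

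The repair is a further round of the same device applied to isomorphisms rather than surjections: for bundles $\sG,\sH$ on $V'$ of the same rank with an isomorphism $g^*\sG\cong g^*\sH$ over $X$, the scheme $\mathrm{Isom}(\sG,\sH)$ is open in $\Tot(\sHom(\sG,\sH))$, hence smooth and quasi-projective over $V'$, and the given isomorphism over $X$ yields a factorization $X\to \mathrm{Isom}(\sG,\sH)\to V'$. After lifting, in this way, the finitely many isomorphisms realizing the cancellation pattern on $X$, the identity does descend to $V'$ and part (b) closes. You should also organize the ``iterated fiber product'' so that it is taken over a single base refinement $V_0$ carrying all the lifted bundles; then each scheme of surjections (and of isomorphisms) is smooth over $V_0$ and their fiber product over $V_0$ stays smooth, which is what preserves membership in $\Sm/k$. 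Finally, a small notational point: as printed, the proposition's arrow $K_0(X)\to\varinjlim K_0(V)$ points against the direction of the map that pullback actually induces; you correctly work with the natural map $\varinjlim K_0(V)\to K_0(X)$.
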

Now, suppose we have two smooth pairs $(M,X)$ and $(N,X)$, with vector bundles $E_M$ on $M$, $E_N$ on $N$, restricting to $E$ on $X$. By the proposition, there is a $V\in \Sm/k$, a vector bundle $E_V$ on $V$ and a commutative diagram
\[
\xymatrix{
&N\\
X\ar@{^(->}[ur]^{i_N}\ar@{^(->}[r]^{i_V}\ar@{^(->}[dr]_{i_M}&V\ar[u]_f\ar[d]^g\\
&M
}
\]
such that $[E_V]=[f^*E_N]=[g^*E_M]\in K_0(V)$. In particular, this implies that $c_p(E_V)=f^*(c_p(E_N))=g^*(c_p(E_M))$ in $A(V)$, and thus 
\[
i_M^*(c_p(E_M))\cap(-)=i_N^*(c_p(E_N))\cap(-):H(Y)\to H(Y).
\]

Proposition~\ref{prop:FultonK0} gives 
\begin{lem} Let $E, E'$ be vector bundles on $X\in\Sch_k$. Then for all $p,q$, the Chern class operators $\tilde{c}_p(E), \tilde{c}_q(E')$ commute. For $p\ge1$, $\tilde{c}_p(E)$ is nilpotent, $\tilde{c}_0(E)$ is the identity operator and $\tilde{c}_p(E)=0$ for $p>\text{rank }E$.
\end{lem}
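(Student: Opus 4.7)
The strategy is to reduce every assertion to a statement about ordinary Chern classes in $A(V)$ for a single smooth pair $(V,X)$ simultaneously extending $E$ and $E'$. Proposition~\ref{prop:FultonK0} together with the left-filtering property of the system of smooth pairs over $X$ (analogous to Lemma~\ref{lem:Filt}, and already used in the well-definedness argument for $\tilde c_p$ preceding the lemma) lets me dominate the two individual lifts $(V_E,\sE_E)$ and $(V_{E'},\sE_{E'})$ by a common smooth pair $(V,X)$ carrying bundles $\sE,\sE'$ on $V$ restricting to $E,E'$; adding trivial summands to convert the $K_0$-equality coming from Fulton's lemma into an honest isomorphism of bundles near $X$, shrinking $V$ to an open neighbourhood of $X$, and invoking excision make this precise. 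Under $\alpha_{V,X}\colon H(X)\iso A_X(V)$, the operator $\tilde c_p(E)$ becomes cup product with $c_p(\sE)\in A(V)$ on the $A(V)$-module $A_X(V)$, and similarly for $\tilde c_q(E')$.

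With this reduction, commutativity of $\tilde c_p(E)$ and $\tilde c_q(E')$ is immediate from the graded commutativity of cup product in $A(V)$ combined with the Chern-structure axiom placing $c_p(\sE),c_q(\sE')$ in $A^{ev}(V)$. The identity $\tilde c_0(E)=\id$ and the vanishing $\tilde c_p(E)=0$ for $p>\rank E$ translate to $c_0(\sE)=1$ and $c_p(\sE)=0$ for $p>\rank \sE$ in $A(V)$, both being standard consequences of the projective bundle formula and the axioms of a Chern structure.

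The substantive step, and where I expect the real work to lie, is nilpotence of $\tilde c_p(E)$ for $p\geq 1$. My approach is a support-cup-product argument on $V$. Since $X$ is quasi-compact, cover $X$ by finitely many affine opens $U_1,\ldots,U_n$ over which $\sE|_X$ trivializes. For each $i$, lift a frame of $\sE|_{U_i}$, by quasi-coherence, to sections of $\sE$ on an affine open $\tilde U_i\subset V$ containing $U_i$, then shrink $\tilde U_i$ to the (open) locus where the lifted frame remains linearly independent; this gives $\sE|_{\tilde U_i}$ trivial. Replacing $V$ by $\bigcup_i\tilde U_i\supset X$, which leaves $A_X(V)$ unchanged by excision, I may assume $V=\bigcup_i\tilde U_i$ with $\sE|_{\tilde U_i}$ trivial for every $i$, so that $c_p(\sE)|_{\tilde U_i}=0$ for $p\geq 1$ (higher Chern classes of a trivial bundle vanish, again by the projective bundle formula). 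The localization sequence then lifts $c_p(\sE)$ to an element of $A_{Z_i}(V)$ with $Z_i:=V\setminus\tilde U_i$, and iterated application of the cup product with supports
\[
A_{Z_i}(V)\otimes A_{Z_j}(V)\to A_{Z_i\cap Z_j}(V)
\]
places the $n$-fold power $c_p(\sE)^n$ in $A_{\bigcap_i Z_i}(V)=A_\emptyset(V)=0$ (the last equality from the localization sequence for $(V,\emptyset)$). Hence the cup-product operator $c_p(\sE)\cup$ on $A_X(V)$ satisfies $\bigl(c_p(\sE)\cup\bigr)^{n}=0$, i.e.\ $\tilde c_p(E)^n=0$, as required.
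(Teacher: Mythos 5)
Your proof is correct and follows the same overall strategy as the paper: use Proposition~\ref{prop:FultonK0} to replace the ad hoc lifts of $E,E'$ by a single smooth pair $(V,X)$ with bundles $\sE,\sE'$, and then read off the assertions from properties of the Chern classes $c_p(\sE)\in A^{ev}(V)$. The one place where you diverge is that the paper simply cites Panin's Theorem~3.6.2 for all four properties (commutativity, $c_0=1$, vanishing above the rank, nilpotence), whereas you re-prove nilpotence from scratch by the finite-trivializing-cover and cup-product-with-supports argument. That argument is sound, and it is essentially the standard proof of nilpotence in Panin's framework, so you have not gained generality, but it does make this part self-contained. One small simplification worth noting: you do not need to lift a frame of $E=\sE|_X$ from $U_i$ and then shrink; since $\sE$ is already a vector bundle on $V$ it is locally trivial, so you can directly take finitely many opens $\tilde U_1,\dots,\tilde U_n$ of $V$ trivializing $\sE$ whose union contains $X$ (quasi-compactness of $X$), replace $V$ by $\bigcup_i\tilde U_i$ via excision, and proceed exactly as you do. This sidesteps the question of whether each $U_i$ sits inside an affine open of $V$, which your lifting step implicitly requires and which in general forces a further refinement of the cover.
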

Indeed, these properties  for the Chern classes $c_p(E)\in A(V)$, $V\in\Sm/k$, follow from \cite[Theorem 3.6.2]{Panin2}.

Similarly, one has the Whitney product formula for the total Chern class operator. Let $\tilde{c}(E)= \sum_{p=0}^{\text{rank }E}\tilde{c}_p(E)$.

\begin{lem}  Let $0\to E'\to E\to E''\to0$ be an exact sequence of vector bundles on $X\in\Sch_k$. Then
\[
\tilde{c}(E)=\tilde{c}(E')\circ\tilde{c}(E'')=\tilde{c}(E'')\circ\tilde{c}(E').
\]
\end{lem}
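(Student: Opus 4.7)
The plan is to lift everything to a smooth ambient variety, apply the Whitney formula in $A$-cohomology there (which is part of Panin's theory of oriented ring cohomology on smooth schemes), then transport the multiplicative identity through the cap product to an operator identity on $H$.

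First, I would apply Proposition~\ref{prop:FultonK0} (Fulton's $K_0$ lemma) to the identity $[E]=[E']+[E'']$ in $K_0(X)$: this produces a smooth pair $(V,X)$ with closed immersion $i\colon X\hookrightarrow V$ and vector bundles $\sE',\sE,\sE''$ on $V$ restricting to $E',E,E''$ on $X$, satisfying
\[
[\sE]=[\sE']+[\sE''] \in K_0(V).
\]
(Concretely, pick a smooth pair carrying all three bundles by the construction used to define $\tilde c_p$, and then further shrink/enlarge along the filtered colimit so that the $K_0$ relation becomes strict on $V$.) Exactly as in the proof of independence of $\tilde c_p(E)$, any two such choices are dominated by a common one, so the identity we verify does not depend on $V$.

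Next, I would invoke the classical Whitney formula for Chern classes in an oriented ring cohomology theory on smooth schemes (Panin, \cite[Theorem 3.6.2]{Panin2}): the total Chern class defines a homomorphism $c\colon K_0(V)\to 1+A^{>0}(V)$ to the multiplicative group of units with leading term $1$, so the $K_0$ relation immediately gives
\[
c(\sE)=c(\sE')\cdot c(\sE'') \in A(V).
\]

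Finally, I would transport this identity to the Chern class operators on $H(X)$ using the cap-product functoriality established just before the statement, in particular the identity $f^{*}(a\cup b)\cap c = f^{*}(a)\cap \bigl(f^{*}(b)\cap c\bigr)$. Taking $f=i$ and applying it to the Whitney identity on $V$, for every $b\in H(X)$ we compute
\begin{align*}
\tilde c(E)(b) &= i^{*}\!\bigl(c(\sE)\bigr)\cap b \\
&= i^{*}\!\bigl(c(\sE')\cup c(\sE'')\bigr)\cap b \\
&= i^{*}\!\bigl(c(\sE')\bigr)\cap\bigl(i^{*}(c(\sE''))\cap b\bigr) \\
&= \tilde c(E')\bigl(\tilde c(E'')(b)\bigr).
\end{align*}
The second equality $\tilde c(E)=\tilde c(E'')\circ\tilde c(E')$ then follows from the previous lemma, which asserts that all Chern class operators commute.

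The substantive content is packaged in Panin's smooth Whitney formula and in Proposition~\ref{prop:FultonK0}; the only real task is bookkeeping the reduction to a single smooth ambient $V$. The main (mild) obstacle is ensuring that the resulting Chern class operators on $H(X)$ coincide with the ones produced by the original (possibly different) smooth pairs used to \emph{define} $\tilde c(E)$, $\tilde c(E')$, $\tilde c(E'')$ — but this is precisely what the independence of $\tilde c_p$ from all such choices (established via Proposition~\ref{prop:FultonK0}) guarantees.
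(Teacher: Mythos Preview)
Your proof is correct and follows exactly the approach the paper indicates: lift the $K_0$ relation to a smooth ambient $V$ via Proposition~\ref{prop:FultonK0}, apply Panin's Whitney formula there (\cite[Theorem~3.6.2]{Panin2}), and transport via the cap-product identity $f^*(a\cup b)\cap c=f^*(a)\cap(f^*(b)\cap c)$. The paper records this in a single sentence; you have simply unpacked the bookkeeping.
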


Indeed, this follows from proposition~\ref{prop:FultonK0} plus the Whitney product formula for the total Chern class $c(E):=\sum_{p=0}^{\text{rank }E}c_p(E)\in A^{ev}(V)$,  for $E\to V$ a vector bundle, $V\in \Sm/k$ (see \cite[Theorem 3.6.2]{Panin2}).

The same reasoning shows that the formal group law for $A$ extends to $H$:
\begin{lem} Let $L, M$ be line bundles on $X\in\Sch_k$. Then 
\[
F_A(\tilde{c}_1(L), \tilde{c}_1(M))=\tilde{c}_1(L\otimes M).
\]
\end{lem}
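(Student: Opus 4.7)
By the preceding lemma, $\tilde{c}_1(L)$ and $\tilde{c}_1(M)$ are commuting nilpotent operators on $H(X)$, so the left-hand side is well-defined: the power series $F_A(u,v)\in A^{ev}(pt)[[u,v]]$ collapses to a polynomial whose coefficients act on $H(X)$ via the canonical $A^{ev}(pt)$-module structure. My plan is to extend $L$ and $M$ simultaneously to line bundles on a common smooth envelope of $X$, and then transport the formal group law identity for $A$ down to $H(X)$ through the cap product.

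For the common envelope I would run the construction that precedes this lemma for $L$ and $M$ in parallel. Fix a locally closed immersion $j:X\hookrightarrow U\subset\P^n$, giving the very ample $\sO_X(1)$. For $m\gg0$ both $L(m)$ and $M(m)$ are generated by global sections; a choice of generating sections yields morphisms $f_L:X\to\P^{N_L}$ and $f_M:X\to\P^{N_M}$ with $f_L^*\sO(1)\cong L(m)$ and $f_M^*\sO(1)\cong M(m)$. The product $(j,f_L,f_M):X\to\P^n\times\P^{N_L}\times\P^{N_M}$ is again a locally closed immersion; take $V$ to be an open subscheme of the target in which $X$ sits as a closed subscheme, with inclusion $\iota:X\to V$. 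Then the line bundles $\sL:=\sO(-m)\boxtimes\sO(1)\boxtimes\sO$ and $\sM:=\sO(-m)\boxtimes\sO\boxtimes\sO(1)$, restricted to $V$, satisfy $\iota^*\sL\cong L$, $\iota^*\sM\cong M$, and in particular $\iota^*(\sL\otimes\sM)\cong L\otimes M$.

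On the smooth $V$, the formal group law identity for $A$ gives
\[
c_1(\sL\otimes\sM) = F_A(c_1(\sL),c_1(\sM))\in A^{ev}(V).
\]
The identity $f^*(a\cup b)\cap c = f^*(a)\cap(f^*(b)\cap c)$ recorded earlier in this section, combined with the $A^{ev}(pt)$-linearity of $\iota^*$ and of $\cap$, will yield by induction on monomials that for every polynomial $P(u,v)\in A^{ev}(pt)[u,v]$ and every $b\in H(X)$,
\[
\iota^*\bigl(P(c_1(\sL),c_1(\sM))\bigr)\cap b = P(\tilde{c}_1(L),\tilde{c}_1(M))(b).
\]
Applying this to a polynomial truncation of $F_A$ of degree large enough that nilpotency kills the remaining terms, and using the definition $\tilde{c}_1(L\otimes M)(b)=\iota^*(c_1(\sL\otimes\sM))\cap b$ (independent of the chosen envelope, by Proposition~\ref{prop:FultonK0} exactly as for the preceding Chern-class lemmas), one obtains
\[
F_A(\tilde{c}_1(L),\tilde{c}_1(M))(b) = \iota^*(c_1(\sL\otimes\sM))\cap b = \tilde{c}_1(L\otimes M)(b).
\]

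The only genuine step is the simultaneous extension of $L$ and $M$ to a common smooth ambient; this is the same combined-envelope trick that already underlies the Whitney formula for $\tilde c$ stated just above, so I do not expect new obstacles. Everything else is pure transport of structure along $\iota^*$ and $\cap$.
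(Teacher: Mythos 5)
Your proof is correct and follows the same strategy the paper has in mind when it writes ``the same reasoning shows that the formal group law for $A$ extends to $H$'': extend $L$ and $M$ to a common smooth envelope, invoke the formal group law identity $c_1(\sL\otimes\sM)=F_A(c_1(\sL),c_1(\sM))$ in $A(V)$, and transport it to $H(X)$ via $\iota^*(-)\cap(-)$ using the cap-product identity $f^*(a\cup b)\cap c=f^*(a)\cap(f^*(b)\cap c)$; well-definedness is, as you note, handled by Proposition~\ref{prop:FultonK0} exactly as for the earlier Chern-class lemmas. The only cosmetic difference is that you construct the common envelope explicitly from generating sections of $L(m)$ and $M(m)$, whereas the paper leans on the Fulton $K_0$-limit statement to produce it; both land in the same place.
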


The properties of the cap product with respect to pull-back and push-forward give
\begin{enumerate}
\item Let $f:Y\to X$ be a projective morphism in $\Sch_k$, $E\to X$ a vector bundle. Then 
\[
f_*\circ \tilde{c}_p(f^*E)=\tilde{c}_p(E)\circ f_*
\]
\item Let $j:U\to X$ be an open immersion, $p:X\times F\to X$ a projection, with $F\in\Sm/k$. Then for $E\to X$ a vector bundle, we have
\[
j^*\circ \tilde{c}(E)=\tilde{c}(j^*E)\circ j^*;\quad p^*\circ \tilde{c}(E)=\tilde{c}(p^*E)\circ p^*.
\]
\end{enumerate}
Finally, the projective bundle formula with supports (remark~\ref{rem:ProjBundleFormSupp}) and the cap products give the projective bundle formula for $H$: For $X\in\Sch_k$, let
 $p:\P^n\times X\to X$ the projection, and let 
 \[
 \alpha_i:H(X)\to H(\P^n\times X)
 \]
 be the composition $\tilde{c}_1(O(1))^i\circ p^*$. Then
 \[
 \sum_{i=0}^n\alpha_i:H(X)^{n+1}\to H(\P^n\times X)
 \]
 is an isomorphism.

\section{Algebraic cobordism} We want to consider the two varieties of algebraic cobordism: the bi-graded theory $\MGL^{*,*}$ represented by the algebraic Thom complex $\MGL\in\SH(k)$, and the theory $\Omega_*$, the universal oriented Borel-Moore homology theory on $\Sch_k$ (in the sense of \cite[Definition 5.1.2]{LevineMorel}). As above, we will assume that $k$ admits resolution of singularities.
For the basic definitions and notions of motivic homotopy theory used below, we refer the reader to \cite{Motivic, MorelVoev, MorelLec, Voevodsky}.

The Thom complex $\MGL$ is constructed from the Thom spaces of the universal bundles $E_n\to \BGL_n$, $\MGL_n:=Th(E_n):=E_n/(E_n\setminus 0_{\BGL_n})$,
\[
\MGL:=(pt,\MGL_1,\MGL_2,\ldots).
\]
The bonding maps are given via the inclusions $i_n:\BGL_n\to \BGL_{n+1}$, noting that $i_n^*(E_{n+1})\cong E_n\oplus O_{\BGL_n}$, and thus we have
\[
\Sigma_tTh(E_n)\cong Th(E_n\oplus O_{\BGL_n})\cong Th(i_n^*E_{n+1})\xrightarrow{\tilde{i}_n}
Th(E_{n+1}).
\]

We recall from \cite[3.8.7]{Panin2} the orientation on  $\MGL^{*,*}$. First of all,  $\MGL^{*,*}$ is a bi-graded ring cohomology theory on $\SP$, with 
\[
\MGL^{p,q}_X(M):=\Hom_{\SH(k)}(\Sigma^\infty_tM/(M\setminus X),\Sigma^{p,q}\MGL).
\]
The ring structure is given by the canonical lifting of $\MGL$ to a ring object in the category of symmetric $T$-spectra (see e.g. \cite{PaninPimenovRoendigs}). The orientation is given by a Thom structure and Panin's theorem \cite[Theorem 3.7.4]{Panin2}, which associates an orientation to a ring cohomology theory with a Thom structure. The Thom structure is induced by choosing a Thom class on the universal Thom space $Th(O_{\P^\infty}(1))$, which we now describe. Since $\P^\infty=\BGL_1$, and $O_{\P^\infty}(1)$ is the universal bundle on $\BGL_1$, the Thom space 
\[
Th(O_{\P^\infty}(1)):=O_{\P^\infty}(1)/(O_{\P^\infty}(1)\setminus \P^\infty)
\]
 is by definition equal to $\MGL_1$. The identity map on $Th(O_{\P^\infty}(1))$ thus extends canonically to a map
\[
\iota:\Sigma^\infty_t Th(O_{\P^\infty}(1))\to \Sigma_t\MGL=\Sigma^{2,1}\MGL
\]
giving the universal Thom class $[\iota]\in \MGL^{2,1}_{\P^\infty}(O_{\P^\infty}(1))$. If now $L\to M$ is a line bundle on some $M\in \Sm/k$, Jouanoulou's trick gives us an affine space bundle $p:M'\to M$ with $M'$ affine. We replace $L\to M$ with $L'\to M'$, giving the $\A^1$ weak equivalence $Th(\tilde{p}):Th(L')\to Th(L)$, and thus,  the isomorphism
\[
Th(\tilde{p})^*:\MGL^{*,*}_M(L)\to \MGL^{*,*}_{M'}(L').
\]
As $M'$ is affine,  $L'$ is generated by global sections, so there is a morphism $f:M'\to  \P^\infty$ with $L'\cong f^*(O_{\P^\infty}(1))$. One defines 
\[
th(L)\in \MGL^{2,1}_M(L)  
\]
as $th(L)=(Th(\tilde{p})^*)^{-1}\circ f^*([\iota])$.

\begin{prop}\label{prop:MGL} Let $k$ be a field admitting resolution of singularities. Then there is a unique bi-graded oriented duality theory $(\MGL'_{*,*},\MGL^{*,*})$ such that the orientation on $\MGL^{*,*}$ is the one with associated Thom structure given by the universal Thom class $[\iota]\in \MGL^{2,1}_{\P^\infty}(O_{\P^\infty}(1))$.
\end{prop}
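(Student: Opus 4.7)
The plan is to apply the bi-graded analogue of Theorem~\ref{thm:Main} to the theory $\MGL^{*,*}$, so the task reduces to verifying that $\MGL^{*,*}$, equipped with the stated Thom structure, is a bi-graded oriented ring cohomology theory on $\SP$ in the sense of section~\ref{sec:Integration}, adapted using the bi-grading conventions of remarks~\ref{rem:Grading1} and~\ref{rem:Graded2}. Once this is done, the bi-graded versions of Theorems~\ref{thm:Int}, \ref{thm:AlgOrient} and \ref{thm:Main} supply the integration with supports subjected to the orientation, promote $\MGL^{*,*}$ to an algebraic oriented theory, and produce the unique extension to an oriented duality theory $(\MGL'_{*,*},\MGL^{*,*})$.

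First I would check that the assignment $\MGL^{p,q}_X(M) := \Hom_{\SH(k)}(\Sigma^\infty_t M/(M\setminus X), \Sigma^{p,q}\MGL)$ is a bi-graded cohomology theory in the sense of definition~\ref{def:cohomology}: localization is the long exact $\Hom$-sequence arising from the cofiber sequence $M\setminus X\to M\to M/(M\setminus X)$ in $\SH(k)$; excision follows from the $\A^1$-weak equivalence $M'/(M'\setminus f^{-1}(X))\to M/(M\setminus X)$ induced by an \'etale $f$ inducing an isomorphism on $X$; and homotopy invariance is built into $\SH(k)$. The multiplicative structure, together with graded commutativity with the signs appropriate to the bi-grading, comes from the canonical lift of $\MGL$ to a commutative monoid in symmetric $T$-spectra (cf.\ \cite{PaninPimenovRoendigs}).

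Next I would verify that the universal class $[\iota]\in\MGL^{2,1}_{\P^\infty}(O_{\P^\infty}(1))$ yields a Thom structure on $\MGL^{*,*}$ in the sense of \cite[Definition 3.2.2]{Panin2}: functoriality $f^*th(L')=th(f^*L')$ is built into the Jouanolou-trick construction recalled just before the proposition, and nondegeneracy follows from the canonical identification $Th(O_M)\simeq\Sigma_t(M_+)$ together with the unit axiom for the ring spectrum $\MGL$, which identifies cup product with $th(O_M)$ as the $T$-suspension isomorphism. Panin's theorem \cite[Theorem 3.7.4]{Panin2} then promotes this Thom structure uniquely to an orientation $\omega$ on $\MGL^{*,*}$, with $th(L)$ in bi-degree $(2,1)$ as required by remark~\ref{rem:Grading1}.

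Finally I would invoke the bi-graded versions of Theorems~\ref{thm:Int}, \ref{thm:AlgOrient} and \ref{thm:Main}: inspection of their proofs shows that they go through verbatim once the bi-graded conventions are imposed, since all arguments use only the axioms of a ring cohomology theory, a Thom/Chern structure, the projective bundle formula, and resolution of singularities. Applying these to $(\MGL^{*,*},\omega)$ in sequence yields integration with supports subjected to $\omega$, an algebraic oriented theory structure, and finally the desired oriented duality theory $(\MGL'_{*,*},\MGL^{*,*})$; uniqueness follows from the uniqueness clauses of Theorems~\ref{thm:Int} and~\ref{thm:Main}. The main obstacle I expect is bookkeeping — tracking sign and bi-degree conventions in $\SH(k)$, in particular checking graded commutativity with the correct signs and confirming that the pushforward bi-degree shifts of remark~\ref{rem:Grading1} are compatible with the Thom isomorphism coming from $[\iota]$ — but no new mathematical input beyond \cite{Panin2} and the preceding sections is required.
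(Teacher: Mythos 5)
Your proposal is correct and follows essentially the same approach as the paper: verify that $\MGL^{*,*}$ with the universal Thom class yields a Thom structure (the paper cites \cite{PaninPimenovRoendigs} for this rather than spelling out the checks), invoke Panin's \cite[Theorem 3.7.4]{Panin2} to promote it to an orientation, and then apply the bi-graded version of Theorem~\ref{thm:Main}. The extra verification of the cohomology-theory axioms and bi-degree bookkeeping you sketch is implicit in the paper's terser argument but not a different route.
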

We use the notation $\MGL'_{*,*}$ to distinguish the Borel-Moore homology theory from the homology theory
\[
\MGL_{p,q}(X):=\Hom_{\SH(k)}(S^{p,q}_k,\MGL\wedge \Sigma^\infty_tX_+).
\]

\begin{proof}
Indeed, the Thom class assignment  $L\mapsto th(L)\in \MGL^{*,*}_M(L)$ described above is shown to give a Thom structure on $\MGL^{*,*}$ in e.g. \cite{PaninPimenovRoendigs}. Panin's theorem \cite[Theorem 3.7.4]{Panin2} gives the associated orientation for $\MGL^{*,*}$, and we may apply the bi-graded version of theorem~\ref{thm:Main} to complete the proof.
\end{proof}

We now turn to the ``geometric" theory $\Omega_*$. In spite of the terminology, $\Omega_*$ does not satisfy all the properties of the underlying Borel-Moore homology theory of a $\Z$-graded oriented duality theory: instead of the long exact sequence of a pair $i:Y\to X$, one has a  right-exact sequence
\[
\Omega_n(Y)\xrightarrow{i_*}\Omega_n(X)\xrightarrow{j^*}\Omega_n(X\setminus Y)\to 0.
\]

In any case,  $\Omega_*$ does act as if it were at least part of a universal theory. Given a functor
\[
H_{*,*}:\Sch'_k\to \bgr\Ab
\]
and an $X\in\Sch_k$, we let $H_{2*,*}(X)=\oplus_nH_{2n,n}(X)$, giving the functor
\[
H_{2*,*}:\Sch'_k\to\gr\Ab.
\]

\begin{prop} \label{prop:OmegaUniv} Let $k$ be a field admitting resolution of singularities, and let $(H,A)$ be a bi-graded oriented duality theory. Then there is a unique natural transformation
\[
\vartheta_H:\Omega_*\to H_{2*,*}
\]
of functors $\Sch'_k\to\gr\Ab$, satisfying
\begin{enumerate}
\item Let $j:U\to X$ be an open immersion in $\Sch_k$. Then the diagram
\[
\xymatrixcolsep{35pt}
\xymatrix{
\Omega_*(X)\ar[r]^{\vartheta_H(X)}\ar[d]_{j^*}&H_{2*,*}(X)\ar[d]^{j^*}\\
\Omega_*(U)\ar[r]_{\vartheta_H(X)}&H_{2*,*}(U)
}
\]
commutes.
\item Let $f:M\to N$ be a morphism in $\Sm/k$, $d_N=\dim_kN$, $d_M=\dim_kM$, $d=\codim f:=d_N-d_M$. Then the diagram
\[
\xymatrixcolsep{50pt}
\xymatrix{
\Omega_*(N)\ar[d]_{f^*}\ar[r]^-{\alpha_{N,N}\circ\vartheta_H(N)}&A^{2d_N-2*, -*}(N)\ar[d]^{f^*}\\
\Omega_{*-d}(M)\ar[r]_-{\alpha_{M,M}\circ\vartheta_H(M)}&A^{2d_N-2*, -*}(M)
}
\]
commutes.
\item Let $L\to X$ be a line bundle on some $X\in\Sch_k$. Then the diagram
\[
\xymatrixcolsep{35pt}
\xymatrix{
\Omega_*(X)\ar[d]_{\tilde{c}_1(L)}\ar[r]^{\vartheta_H(X)}&H_{2*,*}(X)\ar[d]^{\tilde{c}_1(L)}\\
\Omega_{*-1}(X)\ar[r]_{\vartheta_H(X)}&H_{2*-2,*-1}(X)
}
\]
commutes.
\item For $M\in\Sm/k$ of dimension $d_M$ over $k$, set $\Omega^n(M):=\Omega_{d_M-n}(M)$. Then the map $\alpha_{M,M}\circ\vartheta_H:\Omega^*(M)\to A^{2*,*}(M)$ is a homomorphism of graded rings.
\end{enumerate}
\end{prop}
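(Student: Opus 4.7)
The plan is to reduce to the universal property of $\Omega_*$ among oriented Borel-Moore homology theories on $\Sch_k$, as established in \cite{LevineMorel}. The first main step is to enhance the Borel-Moore functor $H_{*,*}$, together with the derived structures discussed at the end of Section~\ref{sec:OrientDualityThy}, to a full oriented Borel-Moore homology theory in the sense of \cite[Def.~5.1.2]{LevineMorel}, using the grading convention $n\mapsto(2n,n)$. The operations already in place are projective push-forward (functoriality on $\Sch'_k$), external products, Chern class operators with the Whitney formula and formal group law $F_A$, the projective bundle formula, and homotopy invariance. What remains is to equip $H$ with smooth pull-back $f^*\colon H_{2n,n}(X)\to H_{2n+2d,n+d}(Y)$ for $f\colon Y\to X$ smooth of relative dimension $d$. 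Given a smooth pair $(M,X)$, one finds a smooth pair $(N,Y)$ together with a morphism $F\colon N\to M$ extending $f$, for instance by embedding $Y\hookrightarrow \P^r\times X$ and choosing a smooth neighbourhood of $Y$ inside $\P^r\times M$; the cohomological pull-back $F^*$ transported across the isomorphisms $\alpha$ gives $f^*$, and its independence of the choice of $(N,F)$ is a standard filtering/transverse-diagram argument based on Lemma~\ref{lem:PushPull}, modelled on the construction of open pull-back in the proof of Theorem~\ref{thm:Main}.

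With smooth pull-back in hand, the Levine-Morel axioms (functoriality, transverse base change, projection formula, projective bundle formula, extended homotopy, dimension, section, and formal group law axioms) are verified by translating the corresponding statements for $A$ assembled in Sections~\ref{sec:Integration}-\ref{sec:OrientDualityThy} across $\alpha$. The key compatibilities between projective push-forward and smooth pull-back in transverse squares all reduce to Lemma~\ref{lem:PushPull}; the section axiom $s^*\circ s_*=\tilde{c}_1(L)\cup(-)$ for the zero section of a line bundle is built into the definition of an integration subjected to an orientation (Definition~\ref{def:Comp}); the formal group law for Chern class operators is the statement $F_A(\tilde{c}_1(L),\tilde{c}_1(M))=\tilde{c}_1(L\otimes M)$ proved in Section~\ref{sec:OrientDualityThy}.

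Once $H_{2*,*}$ is recognized as an oriented Borel-Moore homology theory, \cite[Thm.~7.1.3]{LevineMorel} yields a unique morphism of such theories $\vartheta_H\colon\Omega_*\to H_{2*,*}$. By definition such a morphism commutes with projective push-forward, smooth pull-back, external products, and Chern class operators, so properties (1), (2), and (3) are immediate (for (2) one uses that the cohomological $f^*$ on $\Omega^*(M)=\Omega_{d_M-*}(M)$ for $M\in\Sm/k$ is identified via $\alpha$ with the smooth pull-back on $A^{*,*}$). For (4), the ring structure on $\Omega^*(M)$ is obtained by pulling back the external product along the diagonal $\delta\colon M\to M\times M$, and both $\vartheta_H$ and $\alpha$ are compatible with $\delta^*$ and external products; the unit class $1_M\in\Omega^0(M)$ is represented by $[\id_M]$ and maps under $\alpha_{M,M}\circ\vartheta_H$ to the cohomological unit $1\in A^{0,0}(M)$, yielding a ring homomorphism.

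Uniqueness given (1)-(4) is a separate direct argument not requiring the full Levine-Morel axiomatics: by the presentation of $\Omega_*$ via generators in \cite{LevineMorel}, $\Omega_n(X)$ is spanned by classes $f_*(1_Y)$ with $f\colon Y\to X$ projective, $Y\in\Sm/k$ of dimension $n$, and $1_Y\in\Omega_{d_Y}(Y)$ the fundamental class. Property (4) forces $\vartheta_H(1_Y)=\alpha_{Y,Y}^{-1}(1)$, and the naturality of $\vartheta_H$ as a transformation of functors on $\Sch'_k$ forces $\vartheta_H(f_*(1_Y))=f_*\alpha_{Y,Y}^{-1}(1)$, pinning $\vartheta_H$ down on all generators. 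The main obstacle I expect is the first step: constructing smooth pull-back on $H$ in a provably well-defined way, and then checking the full list of transverse base-change and projection-formula compatibilities required by \cite[Def.~5.1.2]{LevineMorel}. The necessary ingredients are all present in Sections~\ref{sec:Integration}-\ref{sec:OrientDualityThy}, but they must be carefully transported across the duality isomorphisms $\alpha_{M,X}$ and re-organized to match the Levine-Morel axiomatic framework.
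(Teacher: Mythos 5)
Your proposal and the paper's proof diverge at the very first step, and the paper explicitly flags that your route is blocked. You want to upgrade $H_{2*,*}$ to a full oriented Borel-Moore homology theory in the sense of \cite[Def.~5.1.2]{LevineMorel}---which requires functorial smooth pull-back $f^*\colon H(X)\to H(Y)$ for every smooth quasi-projective $f\colon Y\to X$ with $X$ possibly singular---and then invoke the universality of $\Omega_*$ among such theories. But the paper's proof opens with: ``Since $H_{2*,*}$ does not have all the properties of an oriented Borel-Moore functor of geometric type, we are forced to go through the actual construction of $\Omega_*$.'' The oriented duality theory produced by Theorem~\ref{thm:Main} equips $H$ only with push-forward for projective morphisms and pull-back for open immersions (plus pull-back along projections $X\times F\to X$ for $F\in\Sm/k$); the remark following Definition~\ref{def:AlgOrient} makes clear that the paper deliberately weakens Mocanasu's axiom (A5) precisely because strengthening it would be needed to get functorial smooth pull-back, and even Mocanasu only obtains it for ``embeddable'' smooth morphisms.

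The construction of smooth pull-back you sketch does not work as stated. Given smooth $f\colon Y\to X$ of relative dimension $d$ and a smooth pair $(M,X)$, embedding $Y\hookrightarrow\P^r\times X$ and taking the projection $F\colon N\to M$ from a smooth open $N\subset\P^r\times M$ yields $F^{-1}(X)=N\cap(\P^r\times X)$, which strictly contains $Y$ in general. A morphism $(N,Y)\to(M,X)$ in $\SP$ requires $F^{-1}(X)\subset Y$, so there is no cohomological pull-back $F^*\colon A_X(M)\to A_Y(N)$ to transport across $\alpha$. One would instead need to arrange $F^{-1}(X)=Y$ exactly with $F$ transverse, establish well-definedness across choices, and verify the full list of base-change and projection-formula axioms in \cite[Def.~5.1.2]{LevineMorel}---none of which is carried out in the paper, and some of which may genuinely fail at this level of generality. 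The cap product $f^*(-)\cap\colon A_X(M)\otimes H(Y)\to H(Y)$ is available, but using it to define $f^*$ requires a fundamental class $[Y]_H$, which only exists for $Y\in\Sm/k$, not for the singular $Y$ that arise when $X$ is singular.

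The paper sidesteps all of this by following the explicit three-step construction of $\Omega_*$ (cobordism cycles $\sZ_*$, then $\L_*\otimes\sZ_*$, then quotient by the dimension, Gysin, and formal-group-law relations). It defines $\vartheta^1_H$ directly on cobordism cycles via $(f\colon Y\to X;L_1,\ldots,L_r)\mapsto f_*(\tilde{c}_1(L_1)\cdots\tilde{c}_1(L_r)([Y]_H))$ using only push-forward, Chern class operators, and fundamental classes on smooth $Y$. To show the relations die, it observes that every relation is a push-forward $f_*(\tau)$ from a smooth $Y$, and on smooth schemes one can compare with the natural transformation $\vartheta^A\colon\Omega^*\to A^{2*,*}$ coming from the universal property of $\Omega^*$ as the universal \emph{oriented cohomology theory on $\Sm/k$}---not the universal oriented BM homology theory on $\Sch_k$. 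This is the crucial substitution that makes the argument go through without ever needing smooth pull-back on $H$ over singular bases. Your uniqueness argument (via the generation of $\Omega_*(X)$ by classes $[f\colon Y\to X]$ and the normalization forced by (4)) is correct and essentially the same as what the paper's construction yields implicitly, but the existence argument needs to be replaced by the paper's more hands-on route.
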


\begin{proof} We let $\L_*$ denote the Lazard ring, that is, the coefficient ring of the universal rank one commutative formal group law, 
\[
F_\L(u,v):=u+v+\sum_{i,j\ge1}a_{ij}u^iv^j
\]
$\L$ is generated as a commutative $\Z$-algebra by the coefficients $s_{ij}$, and we give $\L$ the grading with $\deg(a_{ij})=i+j-1$. We use the construction of $\Omega_*$ as the universal 
``oriented Borel-Moore functor of geometric type" on $\Sch_k$ (see \cite[Definitions 2,1,1, 2.1.12, 2.2.1 and Theorem 2.3.13]{LevineMorel}). Since $H_{2*,*}$ does not have all the properties of an oriented Borel-Moore functor of geometric type, we are forced to go through the actual construction of $\Omega_*$; we will give a sketch of this three-step process, referring the reader to \cite[\S2]{LevineMorel} for the details.

\noindent
{\bf Step 1}. For $Y\in\Sm/k$ of dimension $d_Y$ over $k$, let $p_Y:Y\to pt$ be the structure morphism, and define the {\em fundamental class} $[Y]_H\in H_{2d_Y,d_Y}(Y)$ by
\[
[Y]_H=\alpha_{Y,Y}^{-1}(p_Y^*(1))
\]
where $1\in A^{0.0}(pt)$ is the unit.

For $X\in\Sch_k$, let $\sZ_n(X)$ denote the group of {\em dimension $n$ cobordism cycles} on $X$. This is the group generated by tuples $(f:Y\to X;L_1,\ldots, L_r)$, with $Y\in\Sm/k$ irreducible of dimension $n+r$ over $k$, $f$ a projective morphism, and $L_1,\ldots, L_r$ line bundles on $Y$ (we allow $r=0$). We identify two cobordism cycles by isomorphism over $X$ (see \cite[Definition 2.1.6]{LevineMorel}. Note that this includes reordering the $L_i$).  $\sZ_*(X)$ has the following operations:
\begin{enumerate}
\item[i.] {\em Projective push-forward}. For $f:X\to X'$ a projective map in $\Sch_k$, set
\[
g_*((f:Y\to X;L_1,\ldots, L_r)):=(g\circ f:Y\to X';L_1,\ldots, L_r).
\]
\item[ii.] {\em Smooth pull-back}. Let $h:X'\to X$ be a smooth, quasi-projective morphism of relative dimension $d$. Set
\begin{multline*}
\hskip 30pt h^*((f:Y\to X;L_1,\ldots, L_r))\\
:=(p_2:Y\times_XX'\to X';p_1^*L_1,\ldots, p_1^*L_r)\in \sZ_{d+n}(X').
\end{multline*}
\item[iii.] {\em Chern class operator}. Let $L\to X$ be a line bundle. Set
\begin{multline*}
\hskip 30pt \tilde{c}_1(L)((f:Y\to X;L_1,\ldots, L_r))\\:=(f:Y\to X;L_1,\ldots, L_r, f^*L)\in\sZ_{n-1}(X).
\end{multline*}
\item[iv.] {\em External products}. Define 
\[
\times:\sZ_n(X)\times\sZ_m(X')\to \sZ_{n+m}(X\times X')
\]
by 
\begin{multline*}
\hskip 30pt (f:Y\to X;L_1,\ldots, L_r)\times (f':Y'\to X';M_1,\ldots, M_s)\\=
(f\times f':Y\times Y'\to X\times X';p_1^*L_1,\ldots, p_1^*L_r, p_2^*M_1,\ldots, p_2^*M_s)
\end{multline*}
\end{enumerate}
Define $\vartheta_H(X):\sZ_*(X)\to H_{2*,*}(X)$ by
\[
\vartheta_H(X)((f:Y\to X;L_1,\ldots, L_r)):=f_*(\tilde{c}_1(L_1)\circ\ldots\circ\tilde{c}_1(L_r)([Y]_H))\in H_{2d_Y-2r,d_Y-r}(X).
\]
The properties of projective push-forward, pull-back for open immersions, and Chern class operators for $H$ that we have discussed in \S\ref{sec:OrientDualityThy} imply that the $\vartheta_H(X)$ define a natural transformation of functors
\[
[\vartheta^1_H:\sZ_*\to H_{2*,*}]:\Sch_k'\to\Gr\Ab,
\]
and that $\vartheta^1_H$ is compatible with pull-back for open immersions, and with the respective Chern class operators for line bundles. 

For $M\in \Sm/k$ of dimension $d_M$ over $k$, we let $\sZ^*(M):=\sZ_{d_M-*}(M)$. We have the map
\[
\vartheta_1^A(M):=\alpha_{M,M}\circ\vartheta^1_H(M):\sZ^*(M)\to A^{2*,*}(M).
\]
It is easy to see that $\vartheta_1^A$ has the same compatibilities as $\vartheta_H$, and in addition, $\vartheta^A$ is compatible with smooth pull-back and external products. \\
\\
{\bf Step 2}. The formal group law $F_A(u,v)\in A^{2*,*}(pt)[[u,v]]$ gives rise to the classifying map
\[
\phi_A:\L_*\to A^{-2*,-*}(pt),
\]
a homomorphism of graded rings. Via the structure morphism $p_X:X\to pt$, $H_{2*,*}(X)$ becomes a graded module over $A^{-2*,-*}(pt)$, and the projective push-forward, open pull-back and Chern class operators are all $A^{-2*,-*}(pt)$-module maps. Via $\phi_A$, $H_{2*,*}(X)$ becomes a 
graded module over $\L_*$, and  the projective push-forward, open pull-back and Chern class operators are all $\L_*$-module maps. Thus, $\vartheta^1_H$ gives rise to the natural transformation
\[
[\vartheta^2_H:\L_*\otimes\sZ_*\to H_{2*,*}]:\Sch_k'\to \gr_{\L_*}\Mod
\]
compatible with open pull-back and Chern class operators.

Similarly, we have the maps
\[
\vartheta_2^A(M):\L^*\otimes\sZ^*(M)\to A^{2*,*}(M),
\]
compatible with projective push-forward, smooth pull-back, Chern class operators and external products. Here $\L^n:=\L_{-n}$, giving the graded ring $\L^*$ and the graded ring homomorphism $\phi_A:\L^*\to A^{2*,*}(pt)$. The graded group $A^{2*,*}(M)$ is thereby a graded $\L^*$-module, and the 
projective push-forward, smooth pull-back and Chern class operators are all $\L^*$-linear. The external products are $\L^*$-bilinear.\\
\\
{\bf Step 3}. The group $\Omega_*(X)$ is defined as a quotient of $\L_*\otimes\sZ_*(X)$ by imposing relations on the Borel-Moore functor $\L_*\otimes\sZ_*$ (see \cite[Definition 2.2.1,]{LevineMorel}):
\begin{enumerate}
\item[a.] {\em The dimension axiom}. For each $X\in \Sch_k$, let $\underline{\sZ}_*(X)$ be the quotient of $\sZ_*(X)$ by the subgroup generated by elements of the form
\[
(f:Y\to X, \pi^*(L_1),\ldots,\pi^*(L_r),M_1,\ldots, M_s)
\]
where $\pi:Y\to Z$ is a smooth morphism in $\Sm/k$, $L_1,\ldots, L_r$ are line bundles on $Z$ and $r>\dim_kZ$.
\item[b.] {\em The Gysin axiom}.  For each $X\in \Sch_k$, let $\underline{\Omega}_*(X)$ be the quotient of $\underline{\sZ}_*(X)$ by the subgroup generated by elements of the form
\[
(f:Y\to X;L_1,\ldots, L_r)-(f\circ i:Z\to X;i^*L_1,\ldots, i^*L_{r-1})
\]
where $i:Z\to Y$ is the inclusion of a smooth codimension one closed subscheme $Z$ such that $O_Y(Z)\cong L_r$.
\item[c.] {\em The formal group law}. $\Omega_*(X)$ is the quotient of $\L_*\otimes\underline{\Omega}_*$ by the $\L_*$-submodule generated by elements of the form
\[
f_*([F_A(\tilde{c}_1(L),\tilde{c}_1(M)) -\tilde{c}_1(L\otimes M)](\eta)),
\]
as $f:Y\to X$ runs over projective morphisms with $Y\in\Sm/k$ irreducible,  $L,M$ run over line bundles on $Y$, and $\eta$ runs over elements of $\sZ_*(Y)$ of the form $\tilde{c}_1(L_1)\circ\ldots\circ\tilde{c}_1(L_r)(\id_Y:Y\to Y)$ for line bundles $L_1,\ldots, L_r$ on $Y$.
\end{enumerate}
It follows from the results of \cite[\S2.4]{LevineMorel} that all the above operations are well-defined, and that $\underline{\sZ}_*$, $\underline{\Omega}_*$ and $\Omega_*$ inherit the operations of projective push-forward, smooth pull-back, Chern class operators and external products from $\sZ_*$. Finally, by \cite[Theorem 2.4.13]{LevineMorel}, $\Omega_*$ is the universal oriented Borel-Moore $\L_*$-functor of geometric type.

To extend $\vartheta^2_H$ to the desired natural transformation $\vartheta_H$, we need only show that 
$\vartheta^2_H$  sends to zero the elements described in (i)-(iii) above.  In fact, we  note that 
\begin{lem} Let $A$ be a bi-graded oriented ring cohomology theory on $\SP$. Then the restriction of $A^{2*,*}$ (with the integration on $A$ subjected to the given orientation) to $\Sm/k$ defines an oriented cohomology theory on $\Sm/k$, in the sense of \cite[Definition 1.1.2]{LevineMorel}.
\end{lem}
\begin{proof} Indeed, an oriented cohomology theory on $\Sm/k$ (following \cite{LevineMorel}) is a contravariant functor $A^*$ from $\Sm/k$ to graded, commutative rings with unit, plus push-forward maps $f_*:A^*(Y)\to A^{*+d}(X)$ for each projective morphism $f:Y\to X$, $d=\codim f$, satisfying the functoriality of projective push-forward, commutativity of pull-back and push-forward in transverse cartesian squares, the projective bundle formula (with $c_1(L):=s^*s_*(1_X)$ for $L\to X$ a line bundle with zero-section $s$) and an extended homotopy property:
\[
p^*:A^*(X)\to A^*(E)
\]
is an isomorphism for each affine space bundle $p:E\to X$. These properties for an oriented ring cohomology theory are all verified in \cite{Panin2}. 
\end{proof}

By \cite[Theorem 7.1.1]{LevineMorel} the structures we have defined on $\Omega^*$ admit a unique extension to make $\Omega^*$ an oriented cohomology theory on $\Sm/k$,  in the sense of \cite{LevineMorel}. By \cite[Theorem 7.1.3]{LevineMorel}, $\Omega^*$ is the universal oriented  cohomology theory on $\Sm/k$. Thus, given a bi-graded oriented ring cohomology theory $A$ on $\SP$, there is a unique natural transformation of oriented cohomology theories on $\Sm/k$
\[
\vartheta^A:\Omega^*\to A^{2*,*}.
\]
By \cite[Proposition 5.2.1]{LevineMorel} the Chern class operators in $\Omega^*$ are given by cup product with the Chern classes $c_1(L)$. As the image in $\Omega^*(M)$, $[f:Y\to M;L_1,\ldots, L_r]$,  of a cobordism cycle $(f:Y\to M;L_1,\ldots, L_r)$ is 
equal to $f_*(\tilde{c}_1(L)\circ\ldots\circ\tilde{c}_1(L_r)(p_Y^*(1)))$, it follows that 
\[
\vartheta^A([f:Y\to M;L_1,\ldots, L_r]])=\vartheta^A_1((f:Y\to M;L_1,\ldots, L_r))
\]
Also, $\vartheta^A:\Omega^*(pt)\to A^{2*,*}(pt)$ is a graded ring homomorphism, and 
$\vartheta^A$ is a $\Omega^*(pt)$-module homomorphism, so 
\[
\vartheta^A(a\cdot [f:Y\to M;L_1,\ldots, L_r]])=\vartheta^A_2(a\otimes (f:Y\to M;L_1,\ldots, L_r))
\]
for all $a\in\L_*$, in other words, $\vartheta^A_2$ descends to the natural transformation $\vartheta^A:\Omega^*\to A^{2*,*}$.

This immediately implies that $\vartheta^2_H$ descends to a natural transformation
\[
\vartheta_H:\Omega_*\to H_{2*,*}.
\]
Indeed, the elements described in (i)-(iii) are all of the form $f_*(\tau)$, for $\tau$ an element of $\sZ_*(Y)$, $\underline{\sZ}_*(Y)$ or $\L_*\otimes\underline{\Omega}_*(Y)$, with $Y\in \Sm/k$, $f:Y\to X$ a projective morphism, and $\tau$ going to zero in $\Omega_*(Y)$. Since 
\[
0=\vartheta^A_2(Y)(\tau)=\alpha_{Y,Y}(\vartheta^2_H(Y)(\tau)),
\]
 it follows that $\vartheta_H^2(Y)(\tau)=0$, and thus
 \[
 0=f_*(\vartheta_H^2(Y)(\tau))=\vartheta_H^2(X)(f_*(\tau)).
 \]
 Thus, $\vartheta^2_H$ descends uniquely to 
 \[
 \vartheta_H:\Omega_*\to H_{2*,*},
 \]
 completing the proof of (1)-(4).
 \end{proof}
 
 There is still the question of the behavior of $\vartheta_H$ with respect to cap products and external products. We recall \cite[Theorem 7.1.1]{LevineMorel}, which states that $\Omega_*$ admits functorial pull-back maps for all l.c.i.\! morphisms in $\Sch_k$, extending the pull-back maps for smooth morphisms, and satisfying the axioms of an oriented Borel-Moore homology theory on $\Sch_k$ (in the sense of \cite[Definition 5.1.2]{LevineMorel}). This enables us to define a cap product map
 \[
 f^*(-)\cap:\Omega^q(M)\otimes\Omega_p(Y)\to \Omega_{p-q}(Y)
 \]
 for each morphism $f:Y\to M$, $M\in\Sm/k$. Indeed, we have the external product
 \[
 \times:\Omega^q(M)\otimes\Omega_p(Y)\to \Omega_{p-q+d_M}(M\times Y)
 \]
As $M$ is smooth, the graph embedding $(f,\id_Y):Y\to M\times Y$ is a regular embedding of codimension $d_M$, so we have a well-defined pull-back
 \[
(f,\id_Y)^*:\Omega_{p-q+d_M}(M\times Y)\to \Omega_{p-q}(Y).
 \]
 We set $f^*(a)\cap b:=(f,\id_Y)^*(a\times b)$ for $a\in \Omega^q(M)$, $b\in \Omega_p(Y)$.
 
 \begin{prop}\label{prop:CapProdComp} Let $f:Y\to M$ be a morphism in $\Sch_k$, with $M\in\Sm/k$. Then the diagram
\[
\xymatrix{
\Omega^q(M)\otimes\Omega_p(Y)\ar[r]^-{f^*(-)\cap}\ar[d]_{ \alpha_{M,M}\circ\vartheta^A(M)\otimes
\vartheta_H(Y)}&\Omega_{p-q}(Y)\ar[d]^{\vartheta_H(Y)}\\
A^{2q,q}(M)\otimes H_{2p,p}(Y)\ar[r]_-{f^*(-)\cap} &H_{2(p-q),p-q}(Y)
}
\]
commutes.
\end{prop}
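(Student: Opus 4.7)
The plan is to use the generating description of $\Omega_*(Y)$ together with the projection formula for cap products to reduce the identity to a computation on a smooth scheme $Z$, where it then follows from the already-established compatibility of the classifying map with smooth pull-back, with Chern classes, and with the ring structure (Proposition~\ref{prop:OmegaUniv}).

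By $\L_*$-linearity and additivity, it suffices to verify the identity for classes $b\in\Omega_p(Y)$ of the form $g_*\bigl(\tilde c_1(g^*L_1)\circ\cdots\circ\tilde c_1(g^*L_r)([Z]_\Omega)\bigr)$, where $g\colon Z\to Y$ is a projective morphism with $Z\in\Sm/k$ and the $L_i$ are line bundles on $Y$.  I then apply the projection formula for cap products --- valid in $\Omega_*$ because $\Omega_*$ is an oriented Borel-Moore homology theory by \cite[Thm.~7.1.1]{LevineMorel}, and valid in $(H,A)$ by functoriality item~(2) in the list following Theorem~\ref{thm:Main} --- together with the naturality of $\vartheta_H$ under projective push-forward from Proposition~\ref{prop:OmegaUniv}, to reduce the claim to the analogous identity on $Z$ for the morphism $h:=f\circ g\colon Z\to M$ and the class $c:=\tilde c_1(g^*L_1)\circ\cdots\circ\tilde c_1(g^*L_r)([Z]_\Omega)\in\Omega_{p-r}(Z)$.

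Since $Z$ is smooth, the reduced identity collapses to an equality of cup products on the $A$-side.  Indeed, $[Z]_\Omega$ is the unit of $\Omega^*(Z)$, so $h^*(a)\cap c$ equals the cup product $h^*(a)\cdot c_1(g^*L_1)\cdots c_1(g^*L_r)$ in $\Omega^*(Z)$; on the $(H,A)$-side, axiom~(A3)(i) applied to the smooth pair $(Z,Z)$ and to $h\colon Z\to M$ extended by itself converts the corresponding cap products into cup products after applying the isomorphism $\alpha_{Z,Z}$, and $\alpha_{Z,Z}\vartheta_H(Z)([Z]_\Omega)=1$ by the definition of the fundamental class.  Writing $\tilde a:=\alpha_{M,M}(\vartheta^A(M)(a))\in A^{2q,q}(M)$, both sides of the reduced identity become, after applying $\alpha_{Z,Z}$, equal to
\[
h^*(\tilde a)\cup c_1(g^*L_1)\cup\cdots\cup c_1(g^*L_r)\in A^{*,*}(Z).
\]
This equality follows from parts~(2), (3) and (4) of Proposition~\ref{prop:OmegaUniv}: the ring homomorphism property of $\alpha_{Z,Z}\circ\vartheta^A(Z)\colon\Omega^*(Z)\to A^{2*,*}(Z)$ turns the product in $\Omega^*(Z)$ into the product of images, the smooth pull-back compatibility identifies the image of $h^*(a)$ with $h^*(\tilde a)$, and the Chern class compatibility sends each $c_1(g^*L_i)\in\Omega^1(Z)$ to $c_1(g^*L_i)\in A^{2,1}(Z)$.

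The main obstacle is not conceptual but notational: one must verify that the cap-product projection formula and the commutation of cap products with the Chern class operators $\tilde c_1(L)$ really do hold in the bi-graded oriented duality theory $(H,A)$ in the generality invoked above.  Both reduce, via the graph-embedding construction of the cap product used in the proof of Theorem~\ref{thm:Main}, to the cup-product projection formula of Remark~\ref{rem:ProjForm} and to graded commutativity of the cup product applied to the even-weight Chern classes.  Once these compatibilities are recorded, the three-step reduction above completes the proof.
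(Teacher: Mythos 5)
Your proof is correct and follows essentially the same route as the paper's: reduce to generators pushed forward from smooth schemes, apply the cap-product projection formula to pass to the smooth source, and then collapse to cup products where the compatibilities of Proposition~\ref{prop:OmegaUniv} finish. Two small remarks: the paper uses the simpler generating set $[g:W\to Y]$ with $W$ smooth and no Chern class operators (Lemma~2.5.11 of Levine--Morel), which spares you having to carry the $\tilde c_1$'s through the projection formula; and in your parametrization the line bundles $L_i$ should live on $Z$ itself rather than be pulled back from $Y$ --- though since the extra generality of arbitrary $L_i$ on $Z$ is what the construction of $\sZ_*$ actually produces, this is merely a slip in notation, not in substance.
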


\begin{proof} Suppose first that $Y$ is in $\Sm/k$, of dimension $d_Y$ over $k$. It is easy to see that the map
\[
f^*(-)\cap:\Omega^q(M)\otimes \Omega_p(Y)\to \Omega_{p-q}(Y)
\]
is given by
\[
f^*(a)\cap b=f^*(a)\cup b,
\]
after making the identification $\Omega_n(Y)=\Omega^{d_Y-n}(Y)$, where the $f^*$ on the right-hand side is the pull-back map $f^*:\Omega^*(Y)\to \Omega^*(M)$ and $\cup$ is the product on 
$\Omega^*(Y)$. The analogous formula on the $(H,A)$ side follows from definition~\ref{def:OrientedDuality}(A4). Thus, the proposition is true for $Y\in\Sm/k$.

In general, we recall from \cite[Lemma 2.5.11]{LevineMorel} that $\Omega_*(Y)$ is generated (as an abelian group) by the classes of the form $[g:W\to Y]$, $W\in\Sm/k$, $g$ projective. For both $\Omega_*$ and $H_{*,*}$, we have the identity
\[
g_*((f\circ g)^*(a)\cap b)=f^*(a)\cap g_*(b).
\]
Since $\vartheta_H$ commutes with projective push-forward, and $\vartheta^A$ commutes with pull-back by arbitrary morphisms in $\Sm/k$, the case of smooth $Y$ implies the general case.
\end{proof}

\begin{prop} The natural transformation $\vartheta_H$ is compatible with external products: For $a\in\Omega_p(X)$, $b\in\Omega_q(Y)$
\[
\vartheta_H(a\times b)=\vartheta_H(a)\times\vartheta_H(b)\in H_{2(p+q),p+q}(X\times Y).
\]
\end{prop}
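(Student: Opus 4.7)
The plan is to prove a stronger statement: the intermediate transformation $\vartheta^1_H:\sZ_* \to H_{2*,*}$ constructed in Step 1 of the proof of Proposition~\ref{prop:OmegaUniv} is already compatible with external products. Since the external product on $\Omega_*$ is induced from that on $\sZ_*$ via the quotient construction (the defining relations in Steps 2 and 3 are preserved under external product with an arbitrary cobordism cycle), and since $\vartheta_H$ factors through $\vartheta^1_H$, compatibility for $\vartheta_H$ will follow.

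Given generators $\eta = (f:V \to X; L_1, \ldots, L_r)$ and $\eta' = (g:W \to Y; M_1, \ldots, M_s)$, unwinding the definitions and invoking the compatibility of external product with projective push-forward on $H_{*,*}$ (established in the discussion following Theorem~\ref{thm:Main}) one obtains
\[
\vartheta^1_H(\eta \times \eta') = (f \times g)_*\bigl(\tilde{c}_1(p_1^*L_1) \cdots \tilde{c}_1(p_1^*L_r)\tilde{c}_1(p_2^*M_1) \cdots \tilde{c}_1(p_2^*M_s)([V \times W]_H)\bigr)
\]
and
\[
\vartheta^1_H(\eta) \times \vartheta^1_H(\eta') = (f \times g)_*\bigl(\tilde{c}_1(L_1) \cdots \tilde{c}_1(L_r)([V]_H) \times \tilde{c}_1(M_1) \cdots \tilde{c}_1(M_s)([W]_H)\bigr).
\]
Their equality reduces, by iteration, to two identities: (I) the multiplicativity of the fundamental class $[V \times W]_H = [V]_H \times [W]_H$, and (II) the compatibility of Chern class operators with external products, $\tilde{c}_1(p_1^*L)(\alpha \times \beta) = \tilde{c}_1(L)(\alpha) \times \beta$ for $\alpha \in H(V)$, $\beta \in H(W)$ and $L$ a line bundle on $V$ (and symmetrically for $p_2^*M$).

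For (I), applying the isomorphism $\alpha_{V \times W, V \times W}$ and invoking axiom (A3)(ii) together with the definition $[V]_H = \alpha_{V,V}^{-1}(p_V^*(1))$ yields
\[
\alpha_{V \times W, V \times W}([V]_H \times [W]_H) = p_V^*(1) \times p_W^*(1) = (p_V \times p_W)^*(1) = p_{V \times W}^*(1) = \alpha_{V \times W, V \times W}([V \times W]_H).
\]
For (II), because $V$ and $W$ are smooth we may use the smooth pair $(V \times W, V \times W)$ as ambient for $p_1^*L$. Applying $\alpha_{V \times W, V \times W}$ and using axioms (A3)(i) and (A3)(ii) translates the assertion into the identity $c_1(p_1^*L) \cup (u \times v) = (c_1(L) \cup u) \times v$ in $A(V \times W)$, where $u = \alpha_{V,V}(\alpha)$ and $v = \alpha_{W,W}(\beta)$. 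Since $c_1(p_1^*L) = p_1^*c_1(L)$ by functoriality of the Chern structure, and $u \times v = p_1^*(u) \cup p_2^*(v)$ in the ring cohomology theory, this follows from the graded-commutative ring structure on $A(V \times W)$.

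The principal obstacle is organizational rather than conceptual: one must track through the three-step construction of $\Omega_*$ to confirm that the external product descends from $\sZ_*$, and verify that each of the compatibilities used on the $H_{*,*}$ side (external product with push-forward, commutativity of distinct Chern operators, functoriality of Chern classes) holds at the Borel-Moore homology level. Each such compatibility reduces, via the comparison isomorphisms $\alpha_{M,X}$ and axiom (A3), to a standard statement about the underlying ring cohomology theory $A$, where it is either already established in the paper or is immediate from the ring axioms.
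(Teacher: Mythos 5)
Your proof is correct, but it takes a genuinely different route from the paper's. The paper handles this with the same two-step template used for the cap-product compatibility in Proposition~\ref{prop:CapProdComp}: first reduce to $X,Y\in\Sm/k$ (using that $\Omega_*$ is generated by classes $[g:W\to X]$ with $W$ smooth, together with compatibility of external products with projective push-forward), and then simply quote the fact that $\vartheta^A:\Omega^*\to A^{2*,*}$ is a ring homomorphism commuting with pull-backs, which makes the smooth case immediate. Your approach instead proves the stronger statement at the level of cobordism cycles: you show $\vartheta^1_H:\sZ_*\to H_{2*,*}$ already respects external products, and you reduce this to two concrete identities — multiplicativity of the fundamental class $[V\times W]_H = [V]_H\times[W]_H$, and compatibility of Chern class operators with external products — each of which drops to an elementary computation in the ring cohomology theory $A$ via the comparison isomorphisms and axiom (A3). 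Your version is longer but more self-contained, and it isolates two reusable facts about the Borel-Moore theory that the paper never states explicitly. One small point you should make explicit: to descend the compatibility from $\vartheta^1_H$ to $\vartheta_H$ you need the composite $\sZ_*(X)\to\L_*\otimes\sZ_*(X)\to\Omega_*(X)$ to be surjective, which is exactly \cite[Lemma~2.5.11]{LevineMorel} (the generation of $\Omega_*(X)$ by classes $[f:Y\to X]$); this is the same lemma the paper invokes for its reduction, so it is available, but your phrase ``$\vartheta_H$ factors through $\vartheta^1_H$'' quietly leans on it without citing it.
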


\begin{proof} The proof is similar to that of proposition~\ref{prop:CapProdComp}. Since the external products are compatible with push-forward   (as in definition~\ref{def:OrientedDuality}(A3)(ii)), it suffices to handle the case of smooth $X$ and $Y$. The statement is then a consequence of the fact that $\vartheta^A(M):\Omega^*(M)\to A^{2*,*}(M)$ is a ring homomorphism.
\end{proof}

\noindent
{\em Comparing $\Omega^*$ and $\MGL'_{2*,*}$}. Putting proposition~\ref{prop:MGL} and proposition~\ref{prop:OmegaUniv} together, we have the natural transformation
\[
[\vartheta_{\MGL'}:\Omega_*\to\MGL'_{2*,*}]:\Sch_k'\to \gr\Ab
\]
extending the natural transformation of oriented cohomology theories on $\Sm/k$
\[
\vartheta^{\MGL}:\Omega^*\to\MGL^{2*,*}
\]
discussed in \cite{LevineMorel}

\begin{conj} Let $k$ be a field of characteristic zero. Then 
$\vartheta_{\MGL'}:\Omega_*\to\MGL'_{2*,*}$ is an isomorphism.
\end{conj}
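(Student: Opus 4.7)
The plan is to reduce the conjecture, via axiomatic universality and Noetherian induction, to two inputs: (a) the existing Levine--Morel conjecture on smooth schemes, which in characteristic zero is accessible through Hopkins--Morel--Hoyois; and (b) a structural statement controlling the ``odd-degree'' Borel--Moore groups $\MGL'_{2n+1,n}$ that appear because $\MGL'_{*,*}$ has a full long exact localization sequence while $\Omega_*$ only has a right-exact one.

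First I would upgrade $\MGL'_{*,*}$ from an oriented duality theory to an \emph{oriented Borel--Moore homology theory} on $\Sch_k$ in the sense of \cite[Definition~5.1.2]{LevineMorel}. The only missing structure is pull-back for arbitrary l.c.i.\ morphisms $f:Y\to X$. Factor such an $f$ as a regular closed immersion followed by a smooth morphism; for smooth $f$, one uses the duality $\alpha_{M,X}$ together with the ordinary pull-back on $\MGL^{*,*}$ and the extended homotopy invariance of Remark \ref{rems:Cohomology}(2); for a regular closed immersion, one uses the orientation on $\MGL^{*,*}$ and deformation to the normal cone, as in Step~1 of Section~\ref{sec:Integration}, to produce a Gysin map between $\MGL'_{*,*}$-groups. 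Independence of the factorization and functoriality in transverse cartesian squares then follow, by the same arguments as in the proof of Theorem~\ref{thm:Int}. Once this is in place, \cite[Theorem~7.1.1,~7.1.3]{LevineMorel} identify $\Omega_*$ as the universal such theory, and $\vartheta_{\MGL'}$ is the unique natural transformation of oriented Borel--Moore $\L_*$-functors of geometric type.

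Next I would proceed by Noetherian induction on $\dim X$. Choose a resolution $\pi:\tilde X\to X$ with $\pi$ an isomorphism over a dense open $U\subset X$ whose complement $Z\subset X$ has smaller dimension; similarly stratify $\tilde X$ by closed subsets $\tilde Z\supset\pi^{-1}(Z)$. Using the naturality of the localization sequence for $(H,A)=(\MGL'_{*,*},\MGL^{*,*})$ and the right-exact localization sequence for $\Omega_*$, together with the inductive hypothesis on $Z$ and $\tilde Z$, one reduces the question to comparing $\vartheta_{\MGL'}(\tilde X)$ and $\vartheta_{\MGL'}(U)$ for the smooth quasi-projective schemes $\tilde X$ and $U$. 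For smooth $M$ of dimension $d$ the duality isomorphism $\alpha_{M,M}$ identifies $\vartheta_{\MGL'}(M)$, up to re-indexing, with the map
\[
\vartheta^{\MGL}(M):\Omega^{d-n}(M)\longrightarrow\MGL^{2(d-n),d-n}(M),
\]
so the smooth case of the new conjecture is precisely the conjecture of \cite{LevineMorel}. In characteristic zero this smooth case should follow from the Hopkins--Morel--Hoyois identification of $\MGL/(a_1,a_2,\ldots)$ with motivic cohomology, together with the Landweber-type presentation of $\Omega^*$ established in \cite{LevineMorel} and a $\MGL$-version of the argument comparing a Landweber-exact theory with $\MGL$-modules.

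The main obstacle I expect is the interaction between the two localization sequences. Even granting the smooth case, to deduce bijectivity of $\vartheta_{\MGL'}(X)$ from that of $\vartheta_{\MGL'}(Z)$ and $\vartheta_{\MGL'}(U)$ via the five-lemma, one needs to control the boundary
\[
\MGL'_{2n+1,n}(U)\longrightarrow \MGL'_{2n,n}(Z),
\]
which has no counterpart on the $\Omega_*$ side. Showing that its image lies in the part of $\MGL'_{2n,n}(Z)$ already hit from $\Omega_n(Z)$, or equivalently showing that the composite $\MGL'_{2n+1,n}(U)\to\MGL'_{2n,n}(Z)\to \MGL'_{2n,n}(X)$ is zero on cycles coming from $\tilde X$, is the genuinely hard point. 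A plausible route is to analyze the weight filtration and the niveau spectral sequence converging to $\MGL'_{*,*}(X)$ (arising from stratifying $X$ by dimension of support), and to show, via the Hopkins--Morel--Hoyois slice filtration and resolution of singularities, that the $E_2$-page agrees with the analogous ``dimension filtration'' computing $\Omega_*(X)$ from \cite[Chapter 4]{LevineMorel}. Establishing this spectral sequence comparison, rather than the smooth case or the axiomatic upgrade, is where I would concentrate the technical effort.
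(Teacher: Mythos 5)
This statement is a conjecture, not a theorem, and the paper only outlines a program for attacking it; your proposal should be compared against that sketch rather than against a completed proof. Both you and the paper organize the argument by Noetherian induction via the localization sequences, and both correctly identify the mismatch between the long exact localization sequence for $\MGL'_{*,*}$ and the merely right-exact one for $\Omega_*$ --- concretely, the odd groups $\MGL'_{2n+1,n}$ and the boundary map out of them --- as the essential obstruction. The routes diverge in two substantive ways. First, the paper localizes at the generic point: setting $\Omega^{(1)}_*(X)=\colim_W\Omega_*(W)$ over proper closed subsets $W$, the induction reduces the external input to the single field case $\vartheta(k(X))$, for which the paper cites the (then unpublished) Hopkins--Morel computation. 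You instead stratify by a resolution $\pi:\tilde X\to X$ with exceptional locus $Z$ and want the full smooth case of the Levine--Morel conjecture as input. That is a strictly stronger statement than the field case; it was not available at the time and in fact is itself proved by essentially the localization argument being sketched here, so your reduction risks circularity in a way the paper's does not. Second, where the paper proposes to resolve the boundary problem by producing explicit generators of $\MGL'_{2*+1,*}(k(X))$ from the Hopkins--Morel spectral sequence and tracking their images under $\del$ via the formal group law, you propose a wholesale identification of the niveau spectral sequence for $\MGL'_{*,*}$ with the dimension filtration computing $\Omega_*$. That is a reasonable way to phrase the goal, but it is vaguer precisely at the hard point: it restates that the boundary must be controlled rather than indicating a mechanism for controlling it.

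Two further comments on your Step~1. The paper does not need to upgrade $\MGL'_{*,*}$ to an oriented Borel--Moore homology theory with full l.c.i.\ pullbacks: the comparison map $\vartheta_{\MGL'}$ is constructed directly in Proposition~\ref{prop:OmegaUniv} from the generators-and-relations presentation of $\Omega_*$, using only the structures already present in an oriented duality theory. Your route would require verifying all of \cite[Definition~5.1.2]{LevineMorel} for $\MGL'_{*,*}$, and additionally checking that the resulting universal comparison map coincides with the $\vartheta_{\MGL'}$ of this paper --- nontrivial overhead that buys nothing for the isomorphism question, since the map already exists. Also, in the basic localization diagram it is the open--closed decomposition $U\subset X\supset Z$, with $U\cong\pi^{-1}(U)$ smooth and $\dim Z<\dim X$, that drives the five-lemma argument; the resolution $\tilde X$ itself only enters when one tries to bound the image of the boundary, as your final paragraph recognizes.
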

The analogous conjecture for $\vartheta^\MGL$ was stated in \cite{LevineMorel}. In fact, the extension of $\vartheta^\MGL$ to $\vartheta_{\MGL'}$ should allow one to use localization to prove the conjecture. We give a sketch of the argument here, details will appear in a subsequent paper.

It follows from an unpublished work of Hopkins-Morel, constructing a spectral sequence from $\L^*\otimes H^*(-,\Z(*))$ converging to $\MGL^{*,*}$,  that the map $\vartheta^\MGL(\Spec F)$ is an isomorphism for any field $F$ (in characteristic zero). Now that we have the extension to 
$\vartheta_{\MGL'}$, we can use the right-exact localization sequence and induction on the Krull dimension to prove the result in general. 

Indeed, for a given $X\in\Sch_k$, let 
\[
\Omega^{(1)}_*(X)=\lim_{\substack{\to\\W\subset X}}\Omega_*(W)
\]
where the limit is over all closed subsets of $W$ not containing any generic point of $X$. Define $\MGL_{2*,*}^{\prime(1)}(X)$ similarly. We have the commutative diagram
\[
\xymatrixcolsep{10pt}
\xymatrix{
&\Omega^{(1)}_*(X)\ar[r]^{i_*}\ar[d]_{\vartheta^{(1)}(X)}&\Omega_*(X)\ar[d]_{\vartheta(X)}\ar[r]^{j^*}&
\Omega_*(k(X))\ar[r]\ar[d]^{\vartheta(k(X))}&0\\
\MGL'_{2*+1,*}(k(X))\ar[r]_-\del&
\MGL_{2*,*}^{\prime(1)}(X)\ar[r]_-{i_*}&\MGL'_{2*,*}(X)\ar[r]_-{j^*}&\MGL_{2*,*}(k(X)\ar[r]&0
}
\]
with exact rows. Assuming $\vartheta^{(1)}(X)$ is an isomorphism, and noting that $\vartheta(k(X))$ is an isomorphism, we already find that  $\vartheta(X)$ is surjective. To show that $\vartheta(X)$ is injective, we need only lift the map $\del$ to a commutative diagram
\[
\xymatrix{
\widetilde{\MGL}'_{2*+1,*}(k(X))\ar[d]_{\vartheta'}\ar[r]^-{\del'}&\Omega^{(1)}_*(X)\ar[d]^{\vartheta^{(1)}(X)}\\
\MGL'_{2*+1,*}(k(X))\ar[r]_-\del&
\MGL_{2*,*}^{\prime(1)}(X)
}
\]
such that $i_*\circ\del'=0$ and $\vartheta'$ is surjective. For this, one uses the Hopkins-Morel spectral sequence to get a handle on elements generating $\MGL'_{2*+1,*}(k(X))$ and then   the formal group law to understand the boundary map $\del$ on the generators.

\end{document}